\newtheorem{theorem}{Theorem}[section]
\newtheorem{proposition}[theorem]{Proposition}
\newtheorem{lemma}[theorem]{Lemma}
\newtheorem{corollary}[theorem]{Corollary}
\theoremstyle{definition}
\newtheorem{definition}[theorem]{Definition}
\newtheorem{remark}[theorem]{Remark}
\newcommand{\Z}{\mathbb{Z}}
\newcommand{\Q}{\mathbb{Q}}
\newcommand{\V}{\mathbb{V}}
\newcommand{\Zhat}{\widehat{\mathbb{Z}}}
\newcommand{\Zhatu}{\widehat{\mathbb{Z}}^{\times}}
\DeclareMathOperator{\Aut}{Aut}
\DeclareMathOperator{\Gal}{Gal}
\title{Locally conjugate Galois sections}
\author{Wojciech Porowski}
\address{Research Institute for Mathematical Sciences, Kyoto University, Kyoto 606-8502, Japan}
\email{porowski@kurims.kyoto-u.ac.jp}  
\subjclass[2010]{14H30, 14F35}
\begin{document}
\begin{abstract}
We consider sections of the \'etale homotopy exact sequence of a hyperbolic curve over a number field. We prove that two sections whose restrictions to decomposition groups are conjugate on a set of valuations of density one are globally conjugate, which establishes the local-global principle for the conjugacy classes of sections. In fact, we obtain this result as a corollary of a more general property concerning sections of the \'etale homotopy exact sequence, so-called finite covering property, which we prove as our main result. 
\end{abstract}

\maketitle

\section{Introduction}\label{s:introduction}

Let $X$ be a geometrically connected variety over a number field $K$, write $G_K=\mathrm{Gal}(\overline{K}/K)$ for the absolute Galois group of $K$ and consider the \'etale homotopy exact sequence
\begin{equation}\label{s1:seq:etale}
1\to \Delta_X\to \Pi_X\to G_K\to 1,
\end{equation}
where $\Pi_X$ and $\Delta_X$ are the \'etale fundamental groups of $X$ and $X_{\overline{K}}$, respectively (see \cite[Exp.~IX, Thm.~6.1]{sga1}). We omit basepoints from the notation as they do not play any role in what follows. From the functoriality of the \'etale fundamental group every $K$-rational point of $X$ determines a conjugacy class of sections of sequence~\eqref{s1:seq:etale}. Assuming that $X$ is a hyperbolic curve, the Grothendieck section conjecture predicts that every section is either cuspidal or comes from a (unique) $K$-rational point of $X$, see \cite{stix2012rational} for a precise statement and a comprehensive discussion. In general this conjecture is widely open.

In this work we will be interested in a local-global principle for conjugacy classes of sections of sequence~\eqref{s1:seq:etale}.
Write $\V(K)$ for the set of nonarchimedean valuations of $K$ and for every $v\in \V(K)$ write $G_v\subset G_K$ for a decomposition group associated to $v$.
Let $s$ and $t$ be two sections of sequence~\eqref{s1:seq:etale}. For $v\in \V(K)$ we say that $s$ and $t$ are conjugate at $v$ if the restrictions of $s$ and $t$ to $G_v$ are conjugate.
For a nonempty subset $\Omega\subset \V(K)$ we say that $s$ and $t$ are conjugate on $\Omega$ if they are conjugate at every $v\in \Omega$.
One may wonder whether two sections that are conjugate on a "large" set of valuations are necessarily globally conjugate. A similar problem has been considered recently in \cite{saidi2022torsors} in the case of pro-solvable fundamental groups of affine curves. Here we can provide an answer in the following form. 
\begin{theorem}\label{s1:thm:global_conjugacy}
Assume that $X$ is either a smooth curve or a semiabelian variety and let $\Omega\subset \V(K)$ be a set of density one. Then, any two sections of sequence~\eqref{s1:seq:etale} that are conjugate on $\Omega$ are (globally) conjugate.
\end{theorem}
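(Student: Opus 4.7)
The plan is to pass from the profinite conjugacy question to a family of finite-level problems and then invoke a Chebotarev-type density-one principle for non-abelian Galois cohomology with finite coefficients.

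First, by a compactness argument in the profinite group $\Delta_X$, two sections $s,t$ of~\eqref{s1:seq:etale} are $\Delta_X$-conjugate if and only if, for every open characteristic subgroup $U\trianglelefteq \Delta_X$, the reductions $\overline{s},\overline{t}\colon G_K\to \Pi_X/U$ are conjugate by an element of $\overline{\Delta}:=\Delta_X/U$. Concretely, writing $C_U\subset \Delta_X$ for the (closed) set of elements whose image in $\overline{\Delta}$ conjugates $\overline{s}$ to $\overline{t}$, nonemptiness of each $C_U$ together with the finite intersection property of the family $\{C_U\}$ yields a global conjugator in $\bigcap_U C_U$. Since the local hypothesis passes obviously to each finite quotient, it suffices to establish global conjugacy at every finite level $U$.

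Fix $U$ and view $\overline{\Delta}$ as a finite $G_K$-module (really, $G_K$-group) via the action induced by $\overline{s}$. The formula $c(\sigma):=\overline{t}(\sigma)\overline{s}(\sigma)^{-1}$ defines a continuous $1$-cocycle $c\colon G_K\to \overline{\Delta}$ whose class $[c]\in H^1(G_K,\overline{\Delta})$ vanishes exactly when $\overline{s}$ and $\overline{t}$ are $\overline{\Delta}$-conjugate, and local conjugacy at $v$ translates directly to vanishing of $[c]|_{G_v}\in H^1(G_v,\overline{\Delta})$. The theorem therefore reduces to the statement that the localization map
\[
H^1(G_K,\overline{\Delta})\longrightarrow \prod_{v\in \Omega}H^1(G_v,\overline{\Delta})
\]
has trivial kernel for every finite $\overline{\Delta}$ and every $\Omega\subset \V(K)$ of density one. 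This is a standard consequence of Chebotarev: the cocycle $c$ factors through a finite Galois quotient $\mathrm{Gal}(L/K)$, triviality of $[c]$ amounts to the stabilizer of a point of the associated $\overline{\Delta}$-torsor being all of $\mathrm{Gal}(L/K)$, local triviality at $v$ places the decomposition group $D_v$ inside that stabilizer, and density-one Chebotarev says that such $D_v$ topologically generate the whole Galois group.

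The main subtlety is the possibly non-abelian nature of $\overline{\Delta}$: $H^1(G_K,\overline{\Delta})$ is only a pointed set, so the twisted-conjugation formalism and the torsor-theoretic reformulation need to be handled carefully to ensure the Chebotarev step goes through. Everything else—reduction to finite quotients, cocycle interpretation, and density-one vanishing—is essentially formal. I would also expect Theorem~\ref{s1:thm:global_conjugacy} to be a clean specialization of the \emph{finite covering property} announced in the abstract, which presumably packages the requisite finite-level analysis of sections of $\Pi_X\to G_K$ into a single structural principle, with the density-one hypothesis then feeding directly into the Chebotarev step above.
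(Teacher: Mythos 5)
The proposal has a fatal gap in the Chebotarev step. You claim that the localization map
\[
H^1(G_K,\overline{\Delta})\longrightarrow \prod_{v\in\Omega}H^1(G_v,\overline{\Delta})
\]
has trivial kernel for every finite $G_K$-group $\overline{\Delta}$ and every density-one $\Omega$, calling this a ``standard consequence of Chebotarev.'' This is false in general, even in the abelian case; the paper points this out explicitly in Section 2, with a reference to Serre's \emph{Cohomologie galoisienne} for a finite $G_K$-module whose Tate--Shafarevich group over a density-one (indeed full) set of places is nonzero. The Chebotarev argument you sketch is only correct when the action on $\overline{\Delta}$ is trivial, so that $H^1(G_K,\overline{\Delta})=\mathrm{Hom}(G_K,\overline{\Delta})$ and density-one generation of $G_K$ by decomposition groups immediately gives injectivity. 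When the action is nontrivial, local triviality at $v$ only says that $G_v$ fixes \emph{some} point of the associated torsor; different $v$ may fix different points, so there is no single stabilizer into which you can push all the $D_v$, and the "topologically generate" conclusion does not deliver a global fixed point. Your phrase ``inside that stabilizer'' silently assumes the very compatibility that is being proved. Since the geometric fundamental group quotients carry a highly nontrivial Galois action, the step you mark as ``essentially formal'' is exactly where the theorem's content lies.

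For comparison, the paper does not argue through a general local-global principle for finite Galois modules at all. Instead it exploits the special structure of $\Delta^{ab}$: after a finite extension of the base field, it is an extension of the Tate module $T(A)$ of an abelian variety by copies of $\widehat{\mathbb{Z}}(1)$, and the image of $G_K$ in $\Aut_{\Zhat}(\Delta^{ab})$ contains a ``full group of generalized homotheties'' by the theorems of Bogomolov and Serre. This is what drives Stoll's density estimate (Lemmas 2.5 and 2.7) and yields the finite covering property for $\Delta^{ab}$. Beyond the abelianization, the passage to the pro-solvable quotient and then the full $\Pi_X$ is by induction, and at each step the restriction of a conjugacy class of sections to an open neighbourhood $\Pi'\subset \Pi$ splits into $[\Delta:\Delta']$ distinct conjugacy classes. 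This is exactly why the paper must prove the full ``finite covering property'' for $n\ge 1$ rather than just the $n=1$ statement of Theorem~\ref{s1:thm:global_conjugacy}: the $n=1$ case alone is not stable under the induction. Your closing paragraph correctly anticipates that the covering property is the right packaging, but the reason it is unavoidable is this splitting phenomenon, not merely a convenient reformulation; and the density-one Chebotarev principle you propose in its place simply does not hold.
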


In other words, Theorem~\ref{s1:thm:global_conjugacy} gives a certain local-global principle for conjugacy classes of sections. The essential content of the above theorem is the case of a hyperbolic curve as in the remaining cases the theorem was known before by a classical result of Serre, see Remark~\ref{s2:rem:serre}.
 In fact, in our approach Theorem~\ref{s1:thm:global_conjugacy} will be proved as a corollary of a more general property of the \'etale homotopy sequence, so-called finite covering property, which we now briefly introduce. 

Let $s,s_1,\ldots s_n$ be sections of sequence~\eqref{s1:seq:etale} for some $n\ge 1$ and let $\Omega$ be a nonempty subset of $\V(K)$.
We say that sections $(s_i)_i$ \textit{cover} section $s$ on $\Omega$ if for every $v\in \Omega$ there exists $1\le i\le n$ such that $s$ and $s_i$ are conjugate at $v$.
We say that sequence~\eqref{s1:seq:etale} has \textit{finite covering property} if for every $\Omega\subset \V(K)$ of density one and every $n\ge 1$ the following condition is satisfied: whenever $s,s_1,\ldots, s_n$ are sections of sequence~\eqref{s1:seq:etale} such that sections $(s_i)_i$ cover section $s$ on $\Omega$ then there exists $1\le i\le n$ such that $s_i$ and $s$ are conjugate.
With these definitions, we can state our main result.
\begin{theorem}\label{s1:thm:covering}
Assume that $X$ is either a smooth curve or a semiabelian variety. Then the \'etale homotopy exact sequence has finite covering property.
\end{theorem}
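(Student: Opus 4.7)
\noindent\emph{Proof plan.}
My strategy is to deduce Theorem~\ref{s1:thm:covering} from the following sharpening of Theorem~\ref{s1:thm:global_conjugacy}, which I will refer to as the \emph{density-zero dichotomy}: for any two non-conjugate sections $s,s'$ of sequence~\eqref{s1:seq:etale}, the set
\[
\Omega_{s,s'}=\{v\in\V(K)\colon s\text{ and }s'\text{ are conjugate at }v\}
\]
has density zero. Granted this dichotomy, the finite covering property is immediate: under the hypotheses of Theorem~\ref{s1:thm:covering}, if no $s_i$ were conjugate to $s$ then each $\Omega_{s,s_i}$ would have density zero, and the inclusion $\Omega\subseteq\bigcup_{i=1}^n\Omega_{s,s_i}$ would express the density-one set $\Omega$ as a finite union of density-zero sets, a contradiction. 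So the central task is to establish the dichotomy.

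I would first reduce the dichotomy to finite quotients of $\Delta_X$. If $s,s'$ are globally non-conjugate, a profinite compactness argument applied to the (compact) set of potential conjugators in $\Delta_X$ produces an open normal subgroup $N\triangleleft\Pi_X$ with $N\subset\Delta_X$ such that the induced sections $\bar s,\bar s'$ of $1\to\Delta_X/N\to\Pi_X/N\to G_K\to 1$ remain non-conjugate by $\Delta_X/N$. Since profinite conjugacy at a place forces conjugacy at every finite level, $\Omega_{s,s'}$ is contained in $\Omega^{(N)}_{s,s'}=\{v\colon\bar s\text{ and }\bar s'\text{ are conjugate at }v\}$ for every such $N$, and in fact in the intersection over any cofinal tower. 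It therefore suffices to show that the density of $\Omega^{(N)}_{s,s'}$ tends to zero as $N$ shrinks.

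At each finite level, parametrize $\Delta_X/N$-conjugacy classes of sections by the pointed non-abelian cohomology set $H^1(G_K,\Delta_X/N)$, so that $(\bar s,\bar s')$ corresponds to a nontrivial class $[f_N]$; the condition $\bar s\sim_v\bar s'$ is that the image of $[f_N]$ in $H^1(G_v,\Delta_X/N)$ is trivial. Passing to a finite Galois extension $L_N/K$ through which both the action of $G_K$ on $\Delta_X/N$ and the cocycle $f_N$ factor, this local triviality amounts (for unramified $v$) to $\mathrm{Frob}_v$ lying in an explicit subset $S_N\subseteq\Gal(L_N/K)$, and Chebotarev reduces the density to $|S_N|/[L_N:K]$. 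For $X$ a semiabelian variety, $H^1(G_K,\Delta_X/N)$ admits a Kummer-theoretic description, and the decay $|S_N|/[L_N:K]\to 0$ follows from Bashmakov--Ribet-type divisibility results together with Serre's open-image theorems on Tate modules. For $X$ a smooth hyperbolic curve, one argues analogously using the slimness of $\Delta_X$ and the openness of the image of $G_K$ in $\mathrm{Out}(\Delta_X)$, which together prevent $f_N$ from being a local coboundary on a positive fraction of Frobenius classes once $N$ is fine enough.

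The main obstacle is exactly this density-decay step, which is the arithmetic-geometric heart of the matter. For an arbitrary finite Galois module the analogue can easily fail, since a density-one set of Frobenii can be covered by a small number of non-conjugate locally-trivial classes (as happens already for three nontrivial characters of a biquadratic Galois group whose product is trivial), so the argument genuinely requires the specific structure of $\Delta_X$ for curves and semiabelian varieties, together with the profinite (rather than finite-level) nature of the sections, to rule out this phenomenon at arbitrarily deep levels. All the other ingredients---the reduction to finite quotients, the translation into non-abelian Galois cohomology, and the formal deduction of Theorem~\ref{s1:thm:covering} from the dichotomy---are essentially routine.
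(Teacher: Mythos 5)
The proposal takes a route genuinely different from the paper's, and it contains a real gap precisely at the step the author of the paper designed the whole covering-property machinery to avoid.

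Your strategy reduces Theorem~\ref{s1:thm:covering} to a \emph{density-zero dichotomy}: for non-conjugate sections $s,s'$, the set $\Omega_{s,s'}$ of places where they become locally conjugate has density zero. This dichotomy is strictly stronger than the finite covering property. The paper never proves it, never claims it, and its whole architecture (Sections~\ref{s:quasi-sections}--\ref{s:global_conjugacy}) is constructed so as \emph{not} to need it: the covering property is stable under restriction to open subgroups of $\Pi_X$ and so can be propagated by induction through the pro-solvable tower (Lemma~\ref{s3:lem:stability_of_coverings}, Theorem~\ref{s5:thm:m-truncated_section_covering}, then the final descent in Section~\ref{s:global_conjugacy}), whereas a statement about the density of a single $\Omega_{s,s'}$ has no such stability and cannot feed an inductive argument. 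The paper states this explicitly in the introduction as the reason for working with coverings rather than with Theorem~\ref{s1:thm:global_conjugacy} directly.

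The concrete gap in your proposal is the ``density-decay step,'' which you yourself flag as the arithmetic-geometric heart. What you offer for it is an appeal to ``Bashmakov--Ribet-type divisibility results together with Serre's open-image theorems'' in the semiabelian case, and to slimness and openness in $\mathrm{Out}(\Delta_X)$ in the curve case. This does not constitute a proof, and the situation is more delicate than these slogans suggest. Even in the abelian case, the estimate the paper actually proves (Lemma~\ref{s2:lem:density_bound}) only bounds the density of $\Omega_c\cap \mathrm{Split}(K_N/K)$, not of $\Omega_c$ itself, and the argument for a torsion class uses the local hypothesis $H^0(G_v,M)=0$ to conclude $\Omega_c=\emptyset$, not a density estimate. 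Moreover, if one replaces the full $\Zhat$-module by a single $\ell$-adic component such as $\Z_\ell(1)$, the analogue of the dichotomy genuinely fails: the class of $2$ in $H^1(G_\Q,\Z_\ell(1))$ is locally trivial precisely at primes $p$ for which the multiplicative order of $2$ mod $p$ is prime to $\ell$, a set of positive density for $\ell\ge 3$. Any correct proof of the dichotomy must therefore exploit the full $\Zhat$-structure (the simultaneity over all $\ell$) in an essential way; your sketch does not do this. In the non-abelian case the difficulty compounds, since $H^1(G_K,\Delta_X/N)$ is only a pointed set, ``locally a coboundary at $v$'' is no longer a linear condition on $\mathrm{Frob}_v$, and slimness/openness in $\mathrm{Out}$ do not by themselves control the size of the relevant Frobenius set $S_N$.

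In short: the formal reduction of Theorem~\ref{s1:thm:covering} to your dichotomy, and the compactness argument reducing the dichotomy to finite quotients, are both fine; but the dichotomy itself remains unproved, and nothing in the paper establishes it. Your plan replaces the paper's actual proof with a harder open-ended problem. To follow the paper's route instead, you would prove the abelian case via Stoll's method (Theorem~\ref{s2:thm:finite_covering_property}, Theorem~\ref{s2:thm:abelianization_covering_property}), establish triviality of geometric centralizers and the splitting/stability formalism for quasi-sections (Section~\ref{s:quasi-sections}), lift to the $m$-step solvable and pro-solvable quotients by induction (Section~\ref{s:pro-solvable}), and finally descend to the full sequence using the torsion-freeness of $\Delta/H^{[sol]}$ (Section~\ref{s:global_conjugacy}).
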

Note that Theorem~\ref{s1:thm:global_conjugacy} follows directly from Theorem~\ref{s1:thm:covering} by taking $n=1$ in the definition of the finite covering property.

We now sketch the structure of the proof of Theorem~\ref{s1:thm:covering} as well as the structure of the present paper. The first step consists of establishing the abelian version of Theorem~\ref{s1:thm:covering}, i.e., replacing the geometric \'etale fundamental group with its topological abelianization. It translates the problem into a question about the Tate module of a semiabelian variety which we answer in Section~\ref{s:covering_property} using a method of Stoll, introduced in~\cite[\S3]{stoll2007finite}. This already implies the validity of Theorem~\ref{s1:thm:covering} for semiabelian varieties and reduces the problem to the case of a hyperbolic curve.
In Section~\ref{s:quasi-sections} we discuss some elementary properties of sections of short exact sequences that are used later. In Section~\ref{s:global_conjugacy_special_case} we show a certain weaker version of Theorem~\ref{s1:thm:covering} by an easier argument; this version will not be needed for the proof of the main result. In Section~\ref{s:pro-solvable} we extend the abelian case to the maximal geometrically pro-solvable quotient by an inductive argument, generalizing the results of \cite{saidi2022torsors}. Finally in Section~\ref{s:global_conjugacy} we deduce Theorem~\ref{s1:thm:covering} from the pro-solvable case.

The technique we use to pass from the abelian case to other quotients of the geometric fundamental group involves intersecting conjugacy classes of sections with open subgroups of $\Pi_X$ and analysing the "splitting" of this conjugacy class. It is at this point that the notion of the covering of a section becomes useful as it will be stable with respect to this operation. In particular, in our approach, even if one is mainly interested in Theorem~\ref{s1:thm:global_conjugacy} it is necessary to introduce a more general situation as in Theorem~\ref{s1:thm:covering}. 

It is an interesting problem whether one can relax the density assumption in Theorem~\ref{s1:thm:global_conjugacy} and require only that $\Omega$ is of positive density. The methods we use in the present work involve passing to various finite field extensions of $K$ hence cannot be directly applied to this more general situation as one might lose control of the density of a preimage of a set of valuations. 

\section{Finite covering property}\label{s:covering_property}
Let $K$ be a number field, write $\overline{K}$ for an algebraic closure of $K$ and $G_K=\Gal(\overline{K}/K)$ for the absolute Galois group of $K$. Moreover, write $\V$ for the set of nonarchemedean valuations of $\overline{K}$ and $\V(K)$ for the set of nonarchimedean valuations of $K$, thus we have a surjective map $\V\twoheadrightarrow \V(K)$.
For $v\in V$ write $G_v\subset G_K$ for the decomposition subgroup of $v$, the absolute Galois group $G_K$ acts on $\V$ and two valuations $v,v'\in \V$ lie in the same orbit if and only if they restrict to the same valuation on $K$.
In this case $G_v$ and $G_{v'}$ are conjugate subgroups of $G_K$. By abuse of notation we also write $G_v$ for $v\in \V(K)$, here $G_v$ is well defined up to conjugation.
This will not lead to any confusion when we consider cohomology of $G_K$-modules and the restriction to $G_v$ as the conjugation map induces the identity on the group cohomology of $G_K$, see~\cite[Prop~1.6.3]{cohomology_of_number_fields}.

Let $M$ be a topological $G_K$-module, for every $v\in \V(K)$ we have a localization map:
\[
loc_v\colon H^1(G_K,M) \to H^1(G_v,M),
\]
defined by restricting cohomology classes to $G_v$.
When $\Omega\subset \V(K)$ is a nonempty subset of valuations of $K$, we define $loc_{\Omega}$ to be the product of maps $loc_v$ for all $v\in \Omega$ 
\[
loc_{\Omega}\colon H^1(G_K,M) \to \prod_{v\in \Omega} H^1(G_v,M).
\]
Note that even when $\Omega$ has density one it may happen that the map $loc_{\Omega}$ is not injective, for an example see~\cite[Ch.~III, 4.7, Lem.~7]{serre1994cohomologie}.
However, when $M$ has trivial $G_K$-action (and $\Omega$ has density one) then the map $loc_{\Omega}$ is clearly injective as $G_K$ is topologically generated by subgroups $G_v$ for all $v\in \Omega$, thanks to Chebotariev's density theorem.

\begin{remark}\label{s2:rem:serre}
We have a classical result of Serre (see \cite{serre1964groupes} and \cite{serre1971groupes}) which provides a class of $G_K$-modules $M$ for which the localization map $loc_{\Omega}$ is injective for every $\Omega\subset \V(K)$ of density one.
Namely, pick a prime number $\ell$ and let $M$ be a finite free $\Z_{\ell}$-module which is unramified almost everywhere, i.e., for all but finitely many $v\in \V(K)$ the inertia subgroup $I_v\subset G_v$ acts trivially on $M$.
Assume furthermore that $M$ is pure of nonzero weight (in the sense of~\cite{deligne1971hodge1}), then the map $loc_{\Omega}$ is injective for every $\Omega$ of density one. In particular, this applies to the cyclotomic character $\Zhat(1)$ and the Tate module $T(A)$ of an abelian variety $A$ over $K$. Moreover, it also implies the validity of Theorem~\ref{s1:thm:global_conjugacy} in the case of semiabelian varieties and curves which are not hyperbolic.
\end{remark} 
 
In this section we will be interested in a certain generalization of the above mentioned injectivity property.
\begin{definition}
Let $M$ be a topological $G_K$-module, $\Omega$ be a nonempty subset of $\V(K)$, $n$ be a positive integer and $c, c_1, \ldots, c_n$ be cohomology classes in $H^1(G_K,M)$. We say that classes $(c_i)_i$ \textit{cover} class $c$ on $\Omega$ if for every $v\in \Omega$ there exists some $1\le i \le n$ such that 
\[
loc_v(c) = loc_v(c_i).
\]
Furthermore, we say that $G_K$-module $M$ has \textit{finite covering property} if for every $\Omega$ of density one and every integer $n\ge 1$ the following condition is satisfied: whenever classes $(c_i)_i$ cover class $c$ on $\Omega$ as above then there exists some $1\le i \le n$ such that $c=c_i$.
\end{definition}
Note that classes $(c_i)_i$ cover $c$ if and only if the difference classes $(c_i-c)_i$ cover the trivial class.
Thus in the definition of the finite covering property we could take as well $c=0$ and obtain the same notion. By considering $n=1$, it is clear that if $M$ has finite covering property then the map $loc_{\Omega}$ is injective for every $\Omega$ of density one.

\begin{remark}
It is not difficult to give an example of a topological $G_K$-module $M$ such that the map $loc_{\Omega}$ is injective for every $\Omega$ of density one but $M$ does not have finite covering property. Indeed, take $M=\Z/2\Z$ and consider three quadratic fields $\Q(\sqrt{2}),\Q(\sqrt{17})$ and $\Q(\sqrt{34})$. They determine three surjective homomorphisms $G_{\Q}\twoheadrightarrow \Z/2\Z$, hence three nonzero classes in $H^1(G_{\Q},M)$. Moreover, these classes cover the zero class on $\V(\Q)$; this follows from the fact that every prime number $p$ is split in at least one of these quadratic field extension of $\Q$. Hence $M$ does not have finite covering property. On the other hand, as $G_K$-action is trivial, all maps $loc_{\Omega}$ are injective whenever $\Omega$ has density one.
\end{remark}

Our main goal in this section is to provide a class of $G_K$-modules having finite covering property. The argument we use is a mild extension of an argument of Stoll, see \cite[\S~3]{stoll2007finite}.

Let $M$ be a finite free $\Zhat$-module and suppose we are given a direct sum decomposition
\begin{equation}\label{s2:decompostion_of_a_module}
M=\bigoplus_{i=1}^nM_i,
\end{equation}
into finitely many nonzero finite free $\Zhat$-modules $M_i$. Let $T\subset \Aut_{\Zhat}(M)$ be a closed subgroup, we say that $T$ is a group of generalized homotheties with respect to decomposition~\eqref{s2:decompostion_of_a_module} if every $\varphi\in T$ preserves all submodules $M_i$ and for every $1\le i \le n$ the induced homomorphism $\varphi|_{M_i}\colon M_i\to M_i$ is a homothety.
In this case for each $1\le i\le n$ we write $T_i$ for the image of $T$ in $\Aut_{\Zhat}(M_i)$, hence $T_i$ may be identified with a subgroup of $\Zhatu$ and we have a natural surjection $T\twoheadrightarrow T_i$.  

In general, when $M$ is a finite free $\Zhat$-module and $T\subset \Aut_{\Zhat}(M)$ is a closed subgroup, we say that $T$ is a group of generalized homotheties if there exists a decomposition~\eqref{s2:decompostion_of_a_module} such that $T$ is a group of generalized homotheties with respect to this decomposition. Note that every such $T$ is an abelian subgroup of $\Aut_{\Zhat}(M)$.

When $G$ is a closed subgroup of $\Zhatu$ we say that $G$ has finite coexponent if the quotient $\Zhatu / G$ has finite exponent.

\begin{lemma}\label{s2:lem:cohomology_of_finite_coexponents_subgroups}
For a natural number $r\ge 1$ consider $N=(\Q/\Z)^{\oplus r}$ with an action of $\Zhatu$ given by homotheties and let $G$ be a closed subgroup of $\Zhatu$ of finite coexponent. Then we have:
\begin{enumerate}
\item the group $H^0(G,N)$ is finite,
\item the group $H^1(G,N)$ has finite exponent.
\end{enumerate}
\end{lemma}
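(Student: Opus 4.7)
The plan is to exploit the containment $(\Zhatu)^m \subseteq G$ (where $m$ denotes the exponent of $\Zhatu/G$) and to analyze both statements prime-by-prime via the decomposition $\Q/\Z = \bigoplus_\ell \Q_\ell/\Z_\ell$.

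For (1), I would observe that any $x \in N^G$ of order $d$ forces the image of $G$ in $(\Z/d\Z)^\times$ to be trivial; then $(\Z/d\Z)^\times$ is a quotient of $\Zhatu/G$ and hence has exponent dividing $m$. Since the Carmichael function $\lambda(d)$ tends to infinity with $d$, only finitely many $d$ satisfy $\lambda(d)\mid m$, so $N^G\subseteq N[d_0]$ is finite for $d_0$ the least common multiple of such $d$.

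For (2), I first reduce to $r=1$ (since $H^1$ commutes with finite direct sums) and decompose into $\ell$-primary parts, obtaining $H^1(G,\Q/\Z)=\bigoplus_\ell H^1(G,\Q_\ell/\Z_\ell)$ by commuting continuous cohomology with discrete coefficients past filtered direct limits. For each prime $\ell$, let $G_\ell$ be the image of $G$ in $\Z_\ell^\times$ and $K_\ell$ the kernel; since $G$ is abelian and $K_\ell$ acts trivially on $\Q_\ell/\Z_\ell$, inflation-restriction provides the exact sequence
\[
0 \to H^1(G_\ell, \Q_\ell/\Z_\ell) \to H^1(G, \Q_\ell/\Z_\ell) \to \mathrm{Hom}(K_\ell, (\Q_\ell/\Z_\ell)^{G_\ell}),
\]
whose right-hand term has exponent bounded by $|(\Q_\ell/\Z_\ell)^{G_\ell}|\le|N^G|$, finite by (1).

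It then remains to bound $H^1(G_\ell, \Q_\ell/\Z_\ell)$ and to show it vanishes for $\ell$ outside a finite set. For $\ell$ odd, the splitting $\Z_\ell^\times\cong \mu_{\ell-1}\times(1+\ell\Z_\ell)$ yields a decomposition $G_\ell=\mu'\times V$ with $|\mu'|$ coprime to $\ell$ and $V\subseteq 1+\ell\Z_\ell$ closed; since $\Q_\ell/\Z_\ell$ is $\ell$-divisible one obtains $H^1(V,\Q_\ell/\Z_\ell)=0$, and then the Hochschild-Serre spectral sequence (in which $\mu'$-cohomology in positive degrees vanishes on $\ell$-primary coefficients) collapses $H^1(G_\ell,\Q_\ell/\Z_\ell)$ to a finite group. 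For $\ell$ outside the finite exceptional set ($\ell=2$, $\ell\mid m$, or $(\ell-1)\mid m$), the containment $(\Z_\ell^\times)^m\subseteq G_\ell$ forces $\mu'\supseteq\mu_{\ell-1}^m\ne 1$; since $\zeta-1\in\Z_\ell^\times$ for every nontrivial $\zeta\in\mu_{\ell-1}$, we get $(\Q_\ell/\Z_\ell)^{G_\ell}=0$ and $H^1(G_\ell,\Q_\ell/\Z_\ell)=0$, whence $H^1(G,\Q_\ell/\Z_\ell)=0$. The main obstacle is the careful Hochschild-Serre bookkeeping at the finitely many exceptional primes; once those finite-exponent contributions are assembled, $H^1(G,\Q/\Z)$ is exhibited as a finite direct sum of groups of finite exponent, hence of finite exponent itself.
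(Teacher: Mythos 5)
Your argument is correct and takes a genuinely different route from the paper's. The paper's own proof is very short: it reduces to the special case $G=(\Zhatu)^d$ via the inflation--restriction sequence $0\to H^1(G/H,N^H)\to H^1(G,N)\to H^1(H,N)$ applied to $H=(\Zhatu)^d\subset G$, and then simply cites the explicit computation in the proof of Stoll's Lemma 3.1 for that case. You avoid the reduction step entirely and work with general $G$ directly: you prove (1) with the Carmichael function, and for (2) you decompose $\Q/\Z$ into $\ell$-primary parts, apply inflation--restriction relative to $K_\ell=\ker(G\to\Z_\ell^\times)$, and control $H^1(G_\ell,\Q_\ell/\Z_\ell)$ via Hochschild--Serre and the structure of $\Z_\ell^\times$, getting outright vanishing for all but finitely many $\ell$ because a nontrivial root of unity $\zeta$ of order prime to $\ell$ has $\zeta-1\in\Z_\ell^\times$. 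In effect you give a self-contained reproof of the content of Stoll's lemma adapted to an arbitrary $G$ of finite coexponent; the paper buys brevity by deferring to the reference, while you buy independence from it at the cost of more bookkeeping.

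Two details to finish. The case $\ell=2$ is flagged but not carried out; it follows the same pattern once you note that $(\Z_2^\times)^m\subset G_2$ forces $V:=G_2\cap(1+4\Z_2)$ to be nontrivial, so a topological generator $v=1+2^ku$ gives $(v-1)(\Q_2/\Z_2)=\Q_2/\Z_2$ by divisibility, hence $H^1(V,\Q_2/\Z_2)=0$, while $(\Q_2/\Z_2)^V$ is finite and the finite quotient $G_2/V$ contributes only a finite inflation term. Also, the condition $\ell\mid m$ in your exceptional set is superfluous: $\mu_{\ell-1}^m\neq 1$ requires only $(\ell-1)\nmid m$, and the pro-$\ell$ part $V$ is automatically nontrivial for every odd $\ell$ since $(1+\ell\Z_\ell)^m\neq 1$. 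Neither point affects correctness.
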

\begin{proof}
We easily reduce to the case $N=\Q/\Z$. Since $G$ has finite coexponent, we have $H =(\Zhatu)^d\subset G$ for some natural number $d\ge 1$. Clearly $H^0(G,N)$ is a subset of $H^0(H,N)$, moreover we have an exact sequence
\[
1\to H^1(G/H,H^0(H,N))\to H^1(G,N) \to H^1(H,N).
\]
It follows that if the statement holds for $H$ then it holds for $G$ as well. Therefore we may assume that $G=(\Zhatu)^d$ for some $d\ge 1$. In this case, the statement follows from the computation in the proof of \cite[Lem.~3.1]{stoll2007finite}.
\end{proof}

\begin{definition}
Let $T\subset \Aut_{\Zhat}(M)$ be a group of generalized homotheties with respect to decomposition~\eqref{s2:decompostion_of_a_module}.
We say that $T$ is \textit{full} if for every $1\le i\le n$ the group $T_i\subset \Zhatu$ has finite coexponent and the kernel of the homomorphism $T\twoheadrightarrow T_i$ has finite exponent.
\end{definition}

\begin{lemma}\label{s2:lem:cohomology_of_full_group_of_homotheties}
Let $M$ be a finite free $\Zhat$-module and let $T\subset \Aut_{\Zhat}(M)$ be a full group of generalized homotheties. Then we have:
\begin{enumerate}
\item the group $H^0(T,M\otimes_{\Zhat} \Q/\Z)$ is finite,
\item the group $H^1(T,M\otimes_{\Zhat} \Q/\Z)$ has finite exponent.
\end{enumerate}
\end{lemma}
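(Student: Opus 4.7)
The plan is to reduce to the single-summand case and then apply Lemma~\ref{s2:lem:cohomology_of_finite_coexponents_subgroups} together with the Hochschild--Serre spectral sequence for the surjection $T\twoheadrightarrow T_i$. Since $T$ preserves each submodule $M_i$ (by definition of a group of generalized homotheties), tensoring with $\Q/\Z$ gives a direct-sum decomposition of $T$-modules
\[
M\otimes_{\Zhat}\Q/\Z=\bigoplus_{i=1}^n M_i\otimes_{\Zhat}\Q/\Z,
\]
and continuous group cohomology commutes with finite direct sums. So it is enough to prove that for every fixed $i$, the group $H^0(T,M_i\otimes_{\Zhat}\Q/\Z)$ is finite and $H^1(T,M_i\otimes_{\Zhat}\Q/\Z)$ has finite exponent; both properties are stable under taking finite direct sums.

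For the $H^0$ claim, first I would note that by construction the action of $T$ on $M_i$ factors through $T_i$, hence
\[
H^0(T,M_i\otimes_{\Zhat}\Q/\Z)=H^0(T_i,M_i\otimes_{\Zhat}\Q/\Z).
\]
Identifying $T_i$ with a closed subgroup of $\Zhatu$ of finite coexponent acting by homotheties on $M_i\otimes_{\Zhat}\Q/\Z\cong (\Q/\Z)^{\oplus r_i}$, this is finite by Lemma~\ref{s2:lem:cohomology_of_finite_coexponents_subgroups}(1).

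For the $H^1$ claim, let $K_i$ denote the kernel of $T\twoheadrightarrow T_i$; by fullness, $K_i$ is a closed subgroup of finite exponent, and it acts trivially on $M_i\otimes_{\Zhat}\Q/\Z$. The inflation--restriction sequence attached to $1\to K_i\to T\to T_i\to 1$ gives
\[
0\to H^1(T_i,M_i\otimes_{\Zhat}\Q/\Z)\to H^1(T,M_i\otimes_{\Zhat}\Q/\Z)\to H^1(K_i,M_i\otimes_{\Zhat}\Q/\Z)^{T_i}.
\]
The leftmost term has finite exponent by Lemma~\ref{s2:lem:cohomology_of_finite_coexponents_subgroups}(2). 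For the rightmost term, since $K_i$ acts trivially on the coefficients we may identify
\[
H^1(K_i,M_i\otimes_{\Zhat}\Q/\Z)=\mathrm{Hom}_{\mathrm{cts}}(K_i,M_i\otimes_{\Zhat}\Q/\Z),
\]
and since $K_i$ has exponent dividing some integer $e$, every such continuous homomorphism is annihilated by $e$. Consequently the rightmost term has exponent dividing $e$. Since an extension of a group of exponent $e_1$ by a group of exponent $e_2$ has exponent dividing $e_1e_2$, we conclude that $H^1(T,M_i\otimes_{\Zhat}\Q/\Z)$ has finite exponent, completing the reduction.

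The argument is rather direct once Lemma~\ref{s2:lem:cohomology_of_finite_coexponents_subgroups} is in hand; the only point that needs a little care is ensuring that both $K_i$ being profinite of finite exponent and $M_i\otimes_{\Zhat}\Q/\Z$ being a discrete torsion module are used correctly to bound the $\mathrm{Hom}$-group by the exponent of $K_i$. This is the mild extension of Stoll's argument the author alludes to, and I do not expect a genuine obstacle beyond bookkeeping of exponents across the two terms in the Hochschild--Serre sequence.
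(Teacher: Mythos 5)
Your proposal is correct and follows essentially the same route as the paper: decompose $M\otimes_{\Zhat}\Q/\Z$ along the given direct sum, reduce $H^0$ to $H^0(T_i,N_i)$ and apply Lemma~\ref{s2:lem:cohomology_of_finite_coexponents_subgroups}(1), and for $H^1$ use the inflation--restriction sequence for $1\to K_i\to T\to T_i\to 1$ together with Lemma~\ref{s2:lem:cohomology_of_finite_coexponents_subgroups}(2) and the finite exponent of $K_i$. The only difference is that you spell out explicitly why $H^1(K_i,N_i)$ has finite exponent (trivial action, so it is $\mathrm{Hom}_{\mathrm{cts}}(K_i,N_i)$, killed by the exponent of $K_i$), a step the paper states more tersely.
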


\begin{proof}
By assumption, we have a decomposition
\[
M=\bigoplus_{i=1}^nM_i,
\]
into a finite direct sum of finite free $\Zhat$-modules such that $T$ is a full generalized homothety group with respect to this decomposition. Write $N=M\otimes_{\Zhat} \Q/\Z$ and $N_i=M_i\otimes_{\Zhat} \Q/\Z$ for $1\le i\le n$. For $j=0,1$ we have
\[
H^j(T,N)=\bigoplus_{i=1}^n H^j(T,N_i).
\]
Moreover $H^0(T,N_i)=H^0(T_i,N_i)$ and since groups $T_i$ have finite coexponent we obtain finiteness of $H^0(T_i,N_i)$ from Lemma~\ref{s2:lem:cohomology_of_finite_coexponents_subgroups}, thus $H^0(T,N)$ is finite as well.

It remains to show that for every $1\le i \le n$ the group $H^1(T,N_i)$ has finite exponent. Write $H_i$ for the kernel of $T\twoheadrightarrow T_i$ and consider the exact sequence
\[
0\to H^1(T_i,N_i)\to H^1(T,N_i)\to H^1(H_i,N_i).
\]
As $T$ is assumed to be full, group $H_i$ has finite exponent and the same is true for its cohomology. Since $H^1(T_i,N_i)$ has finite exponent by Lemma~\ref{s2:lem:cohomology_of_finite_coexponents_subgroups}, the proof is complete.
\end{proof}

\begin{definition}
Let $M$ a finite free $\Zhat$-module with a continuous $G_K$-action, write $G$ for the image of $G_K$ in $\Aut_{\Zhat}(M)$. We say that $M$ is \textit{admissible} if there exists a decomposition
\begin{equation}\label{eq:decomposition_of_admissible_module}
M=\bigoplus_{i=1}^nM_i,
\end{equation}
into a finite direct sum of finite free and $G_K$-stable $\Zhat$-modules and a full group of generalized homotheties $T\subset\Aut_{\Zhat}(M)$ with respect to this decomposition such that $T$ is contained in $G$.
\end{definition}

\begin{remark}\label{s2:rem:examples_of_admissible_modules}
Before we continue, let us give three examples of admissible modules to motivate the above definition (these are the only examples that will be used later).
\begin{enumerate}

\item Cyclotomic character $\Zhat(1)$ is admissible. This follows from the isomorphism $\Gal(\Q^{cyc}/\Q)\cong\Zhat^{\times}$, where $\Q^{cyc}$ is the maximal cyclotomic extension of $\Q$, since then for a number field $K$ the image of $G_K$ in $\Aut_{\Zhat}(\Zhat(1))$ is an open subgroup. This also implies that for every $r \ge 1$ the $G_K$-module $\Zhat(1)^{\oplus r}$ is admissible. Here the decomposition~\eqref{eq:decomposition_of_admissible_module} is trivial, i.e., we have $n=1$.

\item The Tate module $T(A)$ of an abelian variety $A$ over a number field $K$ is an admissible $G_K$-module. This is a theorem of Serre proved in \cite{serre1986groupes}.
More precisely, for a prime number $\ell$ write $G_{K,\ell}$ for the image of $G_K$ in $\Aut_{\Z_{\ell}}(T_{\ell}(A))$, where $T_{\ell}(A)$ is the $\ell$-adic Tate module of $A$.
It was proved by Bogomolov in \cite{bogomolov1980} that $G_{K,\ell}\cap \Z^{\times}_{\ell}$ is an open subgroup of $\Z^\times_{\ell}$ for every prime number $\ell$, thus of finite index $c(\ell)$.
Serre proves in \cite{serre1986groupes} that the sequence $c(\ell)$ is bounded as $\ell$ varies and, after possibly replacing $K$ by a finite field extension, the natural map $G_K\to \prod_{\ell} G_{K,\ell}$ is surjective.
These two facts imply that the intersection of the image of $G_K$ in $\Aut_{\Zhat}(T(A))$ with the group $\Zhatu$ of homotheties has finite coexponent, hence $T(A)$ is admissible.
Here again the decomposition~\eqref{eq:decomposition_of_admissible_module} is trivial. 

\item Direct sum $M=\Zhat(1)^{\oplus r}\oplus T(A)$ of $r\ge 1$ copies of the cyclotomic character with the Tate module of an abelian variety $A$ over $K$ is an admissible $G_K$-module.
To prove it, we take the decomposition $M_1=\Zhat(1)^{\oplus r}$ and $M_2=T(A)$. From the previous example, there exists a closed subgroup $H$ of $G_K$ whose image in $\Aut_{\Zhat}(M_2)$ is a subgroup of $\Zhatu$ of finite coexponent.
Write $G$ and $T$ for the images of $G_K$ and $H$ in $\Aut_{\Zhat}(M)$, respectively. We have $T\subset G$ and we claim that $T$ is a full group of generalized homotheties with respect to the decomposition $M=M_1\oplus M_2$.
Indeed, by construction $T$ is a group of generalized homotheties and $T_2$ is of finite coexponent. For $t\in T$ we write $t_1$, $t_2$ for its images in $T_1$ and $T_2$, respectively.
Recall that we have an isomorphism of $G_K$-modules $det(T(A))\cong \Zhat(g)$, where $g$ is the dimension of $A$, hence for every $t\in T$ we have $t_1^{g}=t_2^{2g}$. 
It follows that for $i=1,2$ the kernel of the projection $T\twoheadrightarrow T_i$ has finite exponent. Moreover, as $T_2$ has finite coexponent, there exists $d\ge 1$ such that $(\Zhatu)^{d}\subset T_2$. Hence
\[
(\Zhatu)^{2dg}\subset T_2^{2g} = T_1^{g}\subset T_1,
\]
which implies that $T_1$ also has finite coexponent. This proves the admissibility of $M$.
\end{enumerate}
\end{remark}

For our purposes, the most interesting property of admissible $G_K$-modules is contained in the following theorem.
\begin{theorem}\label{s2:thm:finite_covering_property}
Let $M$ be a finite free $\Zhat$-module with a continuous $G_K$-action. Suppose that $M$ is admissible and for every $v\in \V(K)$ we have $H^0(G_v,M)=0$. Then $M$ has finite covering property
\end{theorem}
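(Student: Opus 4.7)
The plan is to follow and extend Stoll's method from \cite[\S3]{stoll2007finite} from an injectivity statement to the finite covering statement. After translating all classes by $c$, I reduce to the case $c = 0$: assuming $c_1, \ldots, c_n \in H^1(G_K, M)$ are all nonzero, I must exhibit a valuation $v \in \Omega$ at which every $loc_v(c_i)$ is nonzero, contradicting the covering hypothesis.

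The central algebraic tool is the short exact sequence $0 \to M \to M \otimes \Q \to M \otimes \Q/\Z \to 0$. The hypothesis $H^0(G_v, M) = 0$ for every $v$ implies in particular $H^0(G_K, M) = 0$, and hence $H^0(G_K, M \otimes \Q) = 0$ since $M \otimes \Q$ is torsion free. The long exact sequence therefore yields an injection $H^0(G_K, M \otimes \Q/\Z) \hookrightarrow H^1(G_K, M)$ and analogous injections at each $v$, so that the vanishing of $loc_v(c_i)$ reduces (up to a controlled error landing in the $\Q$-vector space $H^1(G_v, M \otimes \Q)$) to a statement about $G_v$-invariants in the divisible torsion module $M \otimes \Q/\Z$, which is precisely the object to which Lemma~\ref{s2:lem:cohomology_of_full_group_of_homotheties} applies.

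Next I exploit admissibility concretely. Let $T \subset \Aut_{\Zhat}(M)$ be the full group of generalized homotheties provided by admissibility and let $H \subset G_K$ be the closed subgroup whose image in $G$ equals $T$. Since each $T_i \subset \Zhatu$ has finite coexponent, a direct check shows one can find $t \in T$ with $t_p - 1 \ne 0$ in $\Z_p$ for every prime $p$, giving $M^T = 0$, so in particular restriction $H^1(G_K, M) \to H^1(H, M)$ is injective. Via inflation--restriction along $H \twoheadrightarrow T$, the $H$-cohomology of $M \otimes \Q/\Z$ is controlled by that of $T$, and Lemma~\ref{s2:lem:cohomology_of_full_group_of_homotheties} provides that $H^0(T, M \otimes \Q/\Z)$ is finite while $H^1(T, M \otimes \Q/\Z)$ has finite exponent, say $e$. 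After multiplying cocycles by $e$ and passing to a suitable finite extension of $K$, each $c_i$ can be represented by a class factoring through a finite Galois quotient $\Gal(L/K)$, and for almost every $v$ the vanishing of $loc_v(c_i)$ depends only on the conjugacy class of $\mathrm{Frob}_v$ in $\Gal(L/K)$.

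The final step, and the main obstacle, is combinatorial. By Chebotarev, the condition $loc_v(c_i) = 0$ corresponds to $\mathrm{Frob}_v$ lying in a conjugation-invariant subset $S_i \subset \Gal(L/K)$, and the density-one covering hypothesis forces $\bigcup_i S_i = \Gal(L/K)$; one must show this is impossible whenever every $c_i$ is nonzero. This is false without further hypotheses --- the $\Q(\sqrt{2}), \Q(\sqrt{17}), \Q(\sqrt{34})$ example preceding the theorem shows that generic subgroup-like subsets can cover --- but here the $S_i$ are forced by the decomposition $M = \bigoplus M_i$ and the abelian structure of $T$ to arise from character-type cocycles on a generalized-homothety group, and the finite-exponent control coming from Lemma~\ref{s2:lem:cohomology_of_full_group_of_homotheties} pins them down further. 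Stoll's explicit character-sum and pigeonhole analysis in \cite[\S3]{stoll2007finite}, originally designed for the injectivity case $n = 1$, then adapts to this finite-$n$ setting and produces the required contradiction.
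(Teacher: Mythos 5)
Your outline correctly identifies the scaffolding from Stoll --- the short exact sequence $0\to M\to M\otimes\Q\to M\otimes\Q/\Z\to 0$, the role of admissibility in producing a full group of generalized homotheties $T$, and Lemma~\ref{s2:lem:cohomology_of_full_group_of_homotheties} giving finite exponent for $H^1(T,M\otimes_{\Zhat}\Q/\Z)$. But the argument collapses precisely at what you call ``the main obstacle.'' You observe, correctly, that a naive Chebotarev translation (vanishing of $loc_v(c_i)$ corresponds to $\mathrm{Frob}_v$ lying in a conjugation-invariant set $S_i$ in some finite Galois group) cannot rule out covering --- you even cite the $\Q(\sqrt{2}),\Q(\sqrt{17}),\Q(\sqrt{34})$ example --- and then you assert that ``Stoll's explicit character-sum and pigeonhole analysis \ldots adapts to this finite-$n$ setting.'' No such adaptation is given, and Stoll's \S3 (which handles $n=1$) does not automatically yield one. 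This is the heart of the theorem and cannot be waved away.

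What the paper actually does at this point is structurally different from what you sketch, and the difference is decisive. The proof splits into a dichotomy: either some $c_i$ is torsion, or every $c_i$ has infinite order. If some $c_i$ is torsion of order $d$, the hypothesis $H^0(G_v,M)=0$ together with the exact sequence $0\to M\xrightarrow{d}M\to M_d\to 0$ forces $c_i=0$ once it localizes trivially at a single $v$ (which it does, since it participates in the covering). If instead every $c_i$ has infinite order, so does every $mc_i$ (with $m$ the uniform annihilator of all $H^1(G_n,M_n)$ from Lemma~\ref{s2:lem:existence_of_m}), and then one chooses $N$ large enough that the projections of the $mc_i$ to $H^1(G_K,M_N)$ all have order strictly greater than $n$. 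Lemma~\ref{s2:lem:density_bound} --- whose quantitative content your sketch does not use --- then bounds the upper density of $S_i=\{v\colon v \text{ splits in } K_N,\ loc_v(c_i)=0 \text{ in } H^1(G_v,M_N)\}$ by $1/(n[K_N:K])$. Summing over $i$ gives density strictly below $1/[K_N:K]$, which is the density of the valuations split in $K_N$; since $\Omega$ has density one there is some $v\in\Omega$ split in $K_N$ lying in no $S_i$, contradicting the covering. The factor $1/n$ in the density bound, obtained by forcing the order of the projected class to exceed $n$, is exactly what defeats the pigeonhole obstruction you worried about. Without this mechanism (or an equivalent), the proof is incomplete; your ``multiply by the exponent $e$ and pass to a finite Galois quotient'' step does not produce it, because it loses the crucial order information on the individual classes. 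I'd recommend you revisit Lemma~\ref{s2:lem:density_bound} and notice how the order of $mc$ at level $N$ enters the density estimate; that is the piece your argument is missing.
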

The proof of Theorem~\ref{s2:thm:finite_covering_property} follows the same strategy as the proof of~\cite[Prop.~3.6]{stoll2007finite}. We start by proving two technical lemmas.  

Let $M$ be a finite free $\Zhat$-module with a continuous $G_K$-action. For a natural number $n\ge 1$ we write $M_n=M/nM$, thus each $M_n$ is a finite discrete $G_K$-module and $M=\varprojlim M_n$.
We further define $G$ and $G_n$ to be the image of $G_K$ in $\Aut_{\Zhat}(M)$ and $\Aut_{\Z/n\Z}(M_n)$, respectively. For every $n\ge 1$ we have $G_n=\Gal(K_n/K)$ for a finite Galois field extension $K_n/K$ and we have natural surjections $G_K\twoheadrightarrow G \twoheadrightarrow G_n$. 

\begin{lemma}\label{s2:lem:existence_of_m}[cf. \cite[Lem.~3.1]{stoll2007finite}]
Assume that $M$ is an admissible $G_K$-module. Then there exists a natural number $m\ge 1$ such that for every natural number $n\ge 1$ the group $H^1(G_n,M_n)$ is $m$-torsion.
\end{lemma}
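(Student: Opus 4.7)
The plan is to reduce first to bounding the exponent of $H^1(G, M_n)$ uniformly in $n$, and then to use that the full group of homotheties $T$ provided by admissibility is in fact \emph{central} in $G$ in order to split $H^1(G, M_n)$ via the five-term exact sequence into two pieces, each of which will be controlled by Lemma~\ref{s2:lem:cohomology_of_full_group_of_homotheties}.

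For the initial reduction, the kernel of the surjection $G \twoheadrightarrow G_n$ acts trivially on $M_n$, so inflation yields an injection $H^1(G_n, M_n) \hookrightarrow H^1(G, M_n)$, and it suffices to bound the exponent of the latter. Next, I would observe that $T$ is central in $G$: by admissibility, $G$ preserves each summand $M_i$ of the decomposition~\eqref{eq:decomposition_of_admissible_module}, and on each $M_i$ the elements of $T$ act as scalars, which commute with every $G$-automorphism of $M_i$. Hence $T$ is normal in $G$ and the Lyndon--Hochschild--Serre five-term sequence gives
\[
0 \to H^1(G/T, M_n^T) \to H^1(G, M_n) \to H^1(T, M_n),
\]
so the exponent of $H^1(G, M_n)$ divides the product of the exponents of the two outer terms.

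To handle $H^1(T, M_n)$, I would pass from $0 \to M_n \to M \otimes_{\Zhat} \Q/\Z \xrightarrow{\cdot n} M \otimes_{\Zhat} \Q/\Z \to 0$ to the short exact sequence
\[
0 \to H^0(T, M \otimes_{\Zhat} \Q/\Z)/n \to H^1(T, M_n) \to H^1(T, M \otimes_{\Zhat} \Q/\Z)[n] \to 0.
\]
Its left term is a quotient of a finite group and its right term is the $n$-torsion of a group of finite exponent, both bounds being provided by Lemma~\ref{s2:lem:cohomology_of_full_group_of_homotheties}, so $H^1(T, M_n)$ has exponent bounded independently of $n$. For $H^1(G/T, M_n^T)$ it suffices to bound the exponent of the coefficient module $M_n^T$ itself. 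Since $T$ acts block-diagonally through the scalar characters $T_i \subset \Zhatu$, we have $M_n^T = \bigoplus_i M_{i,n}^{T_i}$, and each $M_{i,n}^{T_i}$ is annihilated by the ideal $J_i \subset \Zhat$ generated by $\{\lambda - 1 : \lambda \in T_i\}$. The finite-coexponent hypothesis $(\Zhatu)^{d_i} \subseteq T_i$ forces $J_i$ to be an open ideal of $\Zhat$: a $p$-local check shows that $J_i \cdot \mathbb{Z}_p$ has $p$-adic valuation bounded by a constant depending only on $d_i$, equal to zero for all but finitely many $p$ (since for large $p$ the subgroup $(\mathbb{Z}_p^\times)^{d_i}$ already contains units not congruent to $1$ modulo $p$). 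Consequently $J_i \supseteq N_i \Zhat$ for some positive integer $N_i$ independent of $n$, so $M_{i,n}^{T_i}$ has exponent at most $N_i$.

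Multiplying the three resulting bounds will produce the desired uniform $m$. The most delicate step will be the last one, the control of the exponent of $M_n^T$, which is where the finite-coexponent part of the fullness hypothesis on $T$ enters in an essential way.
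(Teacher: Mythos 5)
Your proof is correct, and the core ideas — reducing to $H^1(G,M_n)$, exploiting that the full group of generalized homotheties $T$ is central in $G$, and invoking Lemma~\ref{s2:lem:cohomology_of_full_group_of_homotheties} — coincide with those of the paper. The difference is in the order of operations. The paper first relates $H^1(G,M_n)$ to $H^1(G,N)$ via the short exact sequence $0\to M_n \to N \xrightarrow{n} N\to 0$ (with $N=M\otimes_{\Zhat}\Q/\Z$), and only \emph{then} applies inflation-restriction to the central subgroup $T$, so that the inflation term has the $n$-independent finite coefficient module $H^0(T,N)$. You apply inflation-restriction to $H^1(G,M_n)$ directly, which leaves you with the $n$-dependent coefficient module $M_n^T$ and forces the extra paragraph about the ideal $J_i$ generated by $\{\lambda-1:\lambda\in T_i\}$ being open. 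That argument is valid, but it is more work than necessary: since $M$ is $\Zhat$-free one has a $G$-equivariant identification $M_n\cong N[n]\hookrightarrow N$, hence $M_n^T\hookrightarrow H^0(T,N)$, which Lemma~\ref{s2:lem:cohomology_of_full_group_of_homotheties} already gives you is finite. So the uniform bound on the exponent of $M_n^T$ follows immediately without any local analysis of $J_i$, and with that simplification your route and the paper's become essentially equivalent in length. In short: correct, slightly different order of the two main reductions, and one step that can be collapsed into what you already know.
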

\begin{proof}
From the definition of admissibility, there exists a decomposition 
\[
M=\bigoplus_{i=1}^n M_i
\]
and a full group of generalized homotheties $T\subset \Aut_{\Zhat}(M)$ with respect to this decomposition such that $T\subset G$. Write $N = M\otimes_{\Zhat} \Q/\Z$, from Lemma~\ref{s2:lem:cohomology_of_full_group_of_homotheties} we deduce that $H^0(T,N)$ is finite and $H^1(T,N)$ has finite exponent.

From the surjection $G \twoheadrightarrow G_n$ we have an injection $H^1(G_n,M_n)\hookrightarrow H^1(G, M_n)$ thus it is enough to find $m\ge 1$ which annihilates all groups $H^1(G, M_n)$ for $n\ge 1$. Since $M$ is $\Zhat$-free we have a short exact sequence
\[
0\to M_n \to N \xrightarrow[]{n} N \to 0.
\]
Taking cohomology we obtain an exact sequence
\[
0 \to H^0(G,N) \otimes \Z/n\Z \to H^1(G,M_n) \to H^1(G,N),
\]
and since $H^0(G,N)$ is a subset of a finite group $H^0(T,N)$, it is enough to prove that cohomology group $H^1(G,N)$ has finite exponent.

Note that $T\subset G$ is a central subgroup of $G$ hence we have an exact sequence
\[
0 \to H^1(G/T,H^0(T,N))\to H^1(G,N) \to H^1(T,N),
\]
thus the statement follows.
\end{proof}

For the purpose of proving Theorem~\ref{s2:thm:finite_covering_property} the admissibility assumption on $G_K$-module $M$ is only needed to apply Lemma~\ref{s2:lem:existence_of_m} and conclude that cohomology groups $H^1(G_n,M_n)$ have bounded exponent as $n$ varies over all natural numbers.
Thus from now on let us assume that there exists $m\ge 1$ which annihilates all groups $H^1(G_n,M_n)$ and we fix one such natural number $m$. 

For a finite extension $L'/L$ of number fields we write $Split(L'/L)\subset \V(L)$ for the set of all nonarchimedean valuations of $L$ which are split completely in $L'$.

\begin{lemma}\label{s2:lem:density_bound}[cf. \cite[Lem.~3.4]{stoll2007finite}]
Let $N\ge 1$ and $c\in H^1(G_K,M_N)$, define $n$ to be the order of $mc$ in the finite group $H^1(G_K,M_N)$. Write $S$ for the set of valuations $v\in \V(K)$ satisfying the following two conditions:
\begin{enumerate}
\item valuation $v$ splits completely in $K_N$,
\item the localized class $loc_v(c)$ in $H^1(G_v,M_N)$ is trivial.
\end{enumerate}
Then the set $S$ has upper density not greater than $\frac{1}{n[K_N:K]}$.
\end{lemma}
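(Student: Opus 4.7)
The plan is to identify $S$ with the set of primes of $K$ splitting completely in a single auxiliary finite Galois extension $L/K$ containing $K_N$, and then apply Chebotariev's density theorem. The required lower bound $[L:K]\ge n[K_N:K]$ will come from the hypothesis on the order of $mc$ combined with the fact that $H^1(G_N,M_N)$ is annihilated by $m$.

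For $v\in S$ the splitting condition allows us to choose $G_v\subseteq G_{K_N}:=\Gal(\overline{K}/K_N)$, and since $G_{K_N}$ acts trivially on $M_N$ by the definition of $K_N$, the restriction $\phi:=\mathrm{res}(c)\in H^1(G_{K_N},M_N)$ is simply a continuous homomorphism $G_{K_N}\to M_N$. The vanishing of $loc_v(c)$ then translates into the vanishing of $\phi|_{G_v}$, so $v\in S$ is equivalent to $G_v\subseteq \ker(\phi)$. The inflation-restriction sequence
\[
0\to H^1(G_N,M_N)\to H^1(G_K,M_N)\xrightarrow{\mathrm{res}}H^1(G_{K_N},M_N)^{G_N}
\]
places $\phi$ in the $G_N$-invariants, hence $\phi$ is equivariant for the conjugation action of $G_N$ on $G_{K_N}$ and the given action on $M_N$; a short check then shows that $\ker(\phi)$ is normal in the full group $G_K$. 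Letting $L$ denote its fixed field, $L/K$ is a finite Galois extension containing $K_N$ whose completely split primes are exactly the elements of $S$, and Chebotariev gives $S$ natural density $1/[L:K]$.

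The main step is now to show $[L:K_N]\ge n$. Since $\Gal(L/K_N)\cong \mathrm{image}(\phi)\subseteq M_N$ is a finite abelian group whose cardinality is at least its exponent, it suffices to prove that the order of $\phi$ in $\mathrm{Hom}(G_{K_N},M_N)$ is at least $n$. This is where the choice of $m$ from Lemma~\ref{s2:lem:existence_of_m} enters: the kernel of $\mathrm{res}$ in the above sequence is $m$-torsion, so if $d$ denotes the order of $\phi$ then $dc$ is killed by $m$, whence the order of $c$ has the shape $de$ with $e\mid m$; consequently $n$, the order of $mc$, equals $de/\gcd(de,m)\le d$, since $e\mid m$ forces $e\mid \gcd(de,m)$. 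I expect this short arithmetic comparison of orders, which is a direct adaptation of \cite[Lem.~3.4]{stoll2007finite}, to be the only genuinely subtle point; the translation of decomposition groups into completely split primes, the equivariance ensuring that $L/K$ is Galois, and the final appeal to Chebotariev are all formal once the lower bound on $d$ is in hand.
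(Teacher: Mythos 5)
Your proof is correct and closely parallels the paper's, with one genuine stylistic difference in the final density step. Both arguments build the same auxiliary extension (the fixed field of $\ker(\mathrm{res}(c))$, which you call $L$ and the paper calls $L_N$) and both establish the same bound $n\le [L:K_N]$ from the fact that $\ker(\mathrm{res})=H^1(G_N,M_N)$ is $m$-torsion; the paper phrases the arithmetic slightly more directly ($n'mc=0$ gives $n\mid n'$ at once, rather than your $\mathrm{ord}(c)=de$ with $e\mid m$ decomposition, though yours is also valid). Where you diverge is in how Chebotariev is invoked: the paper is content to work downstairs over $K_N$, pushing forward the non-Galois extension $L_N/K_N$ and using the density identity $\delta(f^{-1}(A))=[K_N:K]\,\delta(A)$ for subsets $A$ of $\mathrm{Split}(K_N/K)$, while you instead use the $G_N$-invariance of $\mathrm{res}(c)$ coming from the inflation--restriction sequence to show that $\ker(\phi)$ is normal in $G_K$, so that $L/K$ is Galois and $S=\mathrm{Split}(L/K)$ exactly. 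Your route is arguably cleaner (it gives the density of $S$ exactly as $1/[L:K]$ rather than just the upper bound) at the cost of the extra equivariance check, whereas the paper's route avoids needing $L/K$ Galois and hence is slightly more robust. Both are sound.
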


\begin{proof}
Let $S_0$ be a set of valuations of $K$ satisfying property (2) thus $S\subset S_0$ and $S=S_0\cap Split(K_N/K)$.
For a valuation $v\in \V(K)$ choose $w\in \V(K_N)$ lying over $v$ and consider the commutative diagram

\begin{equation}\label{diagram}
\begin{tikzcd}
0\arrow[r]& H^1(G_N,M_N)\arrow[r]& H^1(G_K,M_N)\arrow[r, "res"]\arrow[d, "loc_v"]& H^1(G_{K_N},M_N)\arrow[d, "loc_w"]\\
          &         & H^1(G_v,M_N)\arrow[r]& H^1(G_w,M_N).
\end{tikzcd}
\end{equation}

As $M_N$ has trivial $G_{K_N}$-action, $res(c)$ determines a homomorphism 
\[
\varphi\colon G_{K_N}\to M_N,
\]
which corresponds to a finite Galois field extension $L_N/K_N$ with 
\[
\Gal(L_N/K_N)\subset M_N.
\]
Denote by $n'$ the order of $res(c)$ in $H^1(G_{K_N}, M_N)$, by the definition of $m$ and exactness of the first horizontal row of the above diagram we see that $n'mc=0$.
Hence $n$ divides $n'$. Moreover, the image $\varphi(G_{K_N})\subset M_N$ is an abelian group of order $[L_N:K_N]$ and $n'$ is equal to the exponent of $\varphi(G_{K_N})$, which implies that $n'$ divides $[L_N:K_N]$. Together we conclude that $n$ divides $[L_N:K_N]$ and in particular $[L_N:K_N]\ge n$.

Let $f\colon \V(K_N)\twoheadrightarrow \V(K)$ be the natural map, note that for every subset $A\subset Split(K_N/K)$ we have 
\[
\delta(f^{-1}(A))=[K_N:K]\delta(A),
\]
where $\delta$ denotes the upper density of a set of valuations.
By the commutativity of diagram~\eqref{diagram} we have an inclusion 
\[
f^{-1}(S_0)\subset Split(L_N/K_N),
\]
thus after taking densities we obtain an inequality
\[
\delta(f^{-1}(S_0))\le \delta(Split(L_N/K_N)) = 1/[L_N:K_N]\le 1/n.
\]
Finally as $S\subset S_0$ we conclude
\[
[K_N:K]\delta(S)=\delta(f^{-1}(S))\le \delta(f^{-1}(S_0))\le 1/n,
\]
which finishes the proof.
\end{proof}

\begin{proof}[Proof of Theorem~\ref{s2:thm:finite_covering_property}]
Let $c_1,\ldots,c_n$ be finitely many classes in $H^1(G_K,M)$ covering the zero class on some set $\Omega\subset\V(K)$ of valuations of density one. Our goal is to prove that some class $c_i$ is trivial. By discarding some classes we may assume that for every $1\le i \le n$ the localization $loc_v(c_i)$ vanishes for at least one valuation $v\in\Omega$.

We first claim that some class $c_i$ is torsion. Suppose otherwise that all $c_i$ have infinite order in $H^1(G_K,M)$, hence the same is true for classes $mc_i$.
Because $H^1(G_K,M)=\varprojlim  H^1(G_K,M_i)$, there exists some natural number $N\ge 1$ such that the images of $mc_i$ in $H^1(G_K,M_N)$ have all order greater than $n$.
For $1\le i\le n$, let $S_i$ be the set of valuations $v\in V(K)$ satisfying the following two conditions:
\begin{enumerate}
\item valuation $v$ splits completely in $K_N/K$,
\item the image of $c_i$ in $H^1(G_v,M_N)$ is trivial.
\end{enumerate}
Then, it follows from Lemma~\ref{s2:lem:density_bound} that the upper density of $S_i$ is smaller than $1/(n[K_N:K])$, hence the sum $S=\bigcup_{i=1}^{n} S_i$ has upper density smaller that $1/[K_N:K]$.
As $\Omega$ has density one, this implies that there exists some valuation $v\in \Omega$ which splits completely in $K_N$ and does not belong to $S$.
But this is a contradiction with the assumption that classes $(c_i)_i$ cover the zero class on $\Omega$. Hence there exists some class $c_i$ which is torsion in $H^1(G_K,M)$. 

Finally we observe that a torsion class $c$ of $H^1(G_K,M)$ such that $loc_v(c)=0$ for at least one valuation $v$ must be trivial. Indeed, suppose that $dc=0$ for some $d\ge 1$ and consider the short exact sequence
\[
0\to M\xrightarrow{d} M\to M_d\to 0.
\]
Thanks to the assumption $H^0(G_v,M)=0$, from the long exact sequence in cohomology we obtain the following commutative diagram
\[
\begin{tikzcd}
0\arrow[r]& H^0(G_K,M_d)\arrow[r]\arrow[d, hook]&  H^1(G_K,M)\arrow[r, "d"]\arrow[d] & H^1(G_K,M) \\
0\arrow[r]& H^0(G_v,M_d)\arrow[r]&  H^1(G_v,M). &
\end{tikzcd}
\]
The exactness of the first row shows that $c=0$, which finishes the proof.
\end{proof}

In particular, Theorem~\ref{s2:thm:finite_covering_property} together with Remark~\ref{s2:rem:examples_of_admissible_modules} imply that if $T(A)$ is the Tate module of an abelian variety $A$ over $K$ then $G_K$-module $M=\Zhat(1)^{\oplus r}\oplus T(A)$ has finite covering property for every $r\ge 0$. This conclusion might be slightly strengthened by the following lemma.

\begin{lemma}\label{lem:extension_and_covering_property}
Let $M_1,M_2$ be two topological $G_K$-modules and assume that the direct sum $M_3=M_1\oplus M_2$ has finite covering property. Then both modules $M_1$ and $M_2$ also have finite covering property.
Assume furthermore that for every $v\in \V(K)$ we have $H^0(G_v,M_2)=0$. Then every extension $M$ of $G_K$-modules 
\begin{equation}\label{s2:eq:extension_of_modules}
0\to M_1 \to M \to M_2\to 0
\end{equation}
has finite covering property.
\end{lemma}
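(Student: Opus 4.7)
For part (1), the approach is straightforward: given $c, c_1, \ldots, c_n \in H^1(G_K, M_1)$ covering $c$ on $\Omega$, I would embed them into $H^1(G_K, M_1 \oplus M_2)$ by attaching a zero second coordinate. The tuples $(c,0), (c_1,0), \ldots, (c_n,0)$ retain the covering property valuation-by-valuation, so the finite covering property of $M_1 \oplus M_2$ yields some $c_i = c$; the argument for $M_2$ is symmetric.

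For part (2), the plan is to translate the covering problem in $M$ into a covering problem in $M_1 \oplus M_2$ in such a way that only the hypothesis on $M_1\oplus M_2$ is needed. Write $\iota\colon M_1\hookrightarrow M$ and $\pi\colon M\twoheadrightarrow M_2$, and for $d\in H^1(G_K,M)$ set $\bar d=\pi_*(d)\in H^1(G_K,M_2)$. After subtracting $c$, I may assume the classes $d_1,\ldots,d_n\in H^1(G_K,M)$ cover $0$ on $\Omega$. Partition $\{1,\ldots,n\}=I\sqcup J$ according to whether $\bar d_i=0$, and for every $i\in I$ choose a lift $e_i\in H^1(G_K,M_1)$ with $\iota_*(e_i)=d_i$, which exists by exactness of the long exact sequence in $G_K$-cohomology. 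Define
\[
\tilde d_i=\begin{cases}(e_i,0)&\text{if }i\in I,\\(0,\bar d_i)&\text{if }i\in J,\end{cases}
\]
as classes in $H^1(G_K,M_1\oplus M_2)$.

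The central claim is that $(\tilde d_i)$ still covers $0$ on $\Omega$. For $v\in\Omega$, pick $\ell$ with $loc_v(d_\ell)=0$; if $\ell\in J$ then $loc_v(\bar d_\ell)=0$ gives $loc_v(\tilde d_\ell)=0$, while if $\ell\in I$ the equation $\iota_v(loc_v(e_\ell))=loc_v(d_\ell)=0$ combined with the injectivity of $\iota_v\colon H^1(G_v,M_1)\to H^1(G_v,M)$---granted by the hypothesis $H^0(G_v,M_2)=0$ through the local long exact sequence---forces $loc_v(e_\ell)=0$, again giving $loc_v(\tilde d_\ell)=0$. Applying the finite covering property of $M_1\oplus M_2$ produces some $i_0$ with $\tilde d_{i_0}=0$; the case $i_0\in J$ is excluded by the very definition of $J$, so $i_0\in I$, $e_{i_0}=0$, and $d_{i_0}=\iota_*(e_{i_0})=0$.

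The main obstacle is conceptual rather than technical: neither of the obvious reductions---passing to $(\bar d_i)$ in $H^1(G_K,M_2)$, or to $(e_i)_{i\in I}$ in $H^1(G_K,M_1)$---works in isolation. The first only identifies an index with vanishing $\bar d_{i_0}$ without controlling $d_{i_0}$ itself; the second produces a covering of $0$ only on the subset of $\Omega$ where the original cover happens to use some $i\in I$, a subset whose density need not be one. The key insight is to package both streams of information into one covering family in $M_1\oplus M_2$, and the hypothesis $H^0(G_v,M_2)=0$ is exactly what makes the $M_1$-coordinate behave correctly at every $v\in\Omega$.
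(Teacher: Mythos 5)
Your proposal is correct and follows essentially the same route as the paper: both proofs partition the indices according to whether the image in $H^1(G_K,M_2)$ vanishes, lift those classes that do back to $H^1(G_K,M_1)$, assemble a hybrid covering family in $H^1(G_K,M_1\oplus M_2)$, and use the local injectivity coming from $H^0(G_v,M_2)=0$ to verify the covering is preserved. The only cosmetic difference is that the paper first invokes the finite covering property of $M_2$ (established in the first part) to observe that the index set $I$ is nonempty, whereas you let this fall out of the final contradiction; the substance is the same.
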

\begin{proof}
Fix a set of valuations $\Omega\subset \V(K)$ of density one, all covers below will mean covers on $\Omega$. First we prove that $M_1$ and $M_2$ have finite covering property, clearly it is enough to prove it only for $M_1$.
Let $c_1,\dots, c_n$ be classes in $H^1(G_K,M_1)$ covering the zero class. Consider classes $c'_1,\ldots,c'_n$ of $H^1(G_K,M_3)$ defined as $c'_i=(c_i,0)$ with respect to the identification
\[
H^1(G_K,M_3)= H^1(G_K,M_1)\oplus H^1(G_K,M_2).
\]
As $loc_v(c_i)=0$ implies $loc_v(c'_i)=0$, classes $(c'_i)_i$ cover the zero class in $H^1(G_K,M_3)$. By assumption for some $1\le i \le n$ we have $c'_i=0$ which implies $c_i=0$. This proves the finite covering property for $M_1$.

We now prove the second statement. Let $M$ be an extension~$\eqref{s2:eq:extension_of_modules}$ of $M_2$ by $M_1$ and let $c_1,\ldots, c_n$ be classes in $H^1(G_K,M)$ covering the zero class. Write $\bar{c}_1,\dots,\bar{c}_n$ for the images of $c_i$ along the homomorphism
\[
H^1(G_K,M)\to H^1(G_K,M_2).
\] 
Then, classes $(\bar{c}_i)_i$ cover the zero class in $H^1(G_K,M_2)$. Since $M_2$ has finite covering property, some classes $\bar{c}_i$ must be trivial; we may index them in such a way that $\bar{c}_i=0$ for $1\le i \le l$ and $\bar{c}_i\ne 0$ for $l+1\le i \le n$ with some $l\ge 1$. From our assumptions for every $v\in \V(K)$ we have a commutative diagram with exact rows
\[
\begin{tikzcd}
0\arrow[r]& H^1(G_K,M_1)\arrow[r]\arrow[d]& H^1(G_K,M)\arrow[d]\arrow[r] & H^1(G_K,M_2)\arrow[d] \\
0\arrow[r]& H^1(G_v,M_1)\arrow[r]& H^1(G_v,M)\arrow[r] & H^1(G_v, M_2),
\end{tikzcd}  
\]
thus for every $1\le i\le l$ there is a unique class $d_i$ in $H^1(G_K,M_1)$ mapping to $c_i$ through the second horizontal arrow in the first row. Note that $loc_v(c_i)=0$ if and only if $loc_v(d_i)=0$.

Consider classes $c'_i$ of $H^1(G_K,M_3)$ defined as follows
\[
c'_i = (d_i,0),\;\textrm{for}\; 1\le i\le l,\quad
c'_i = (0,\bar{c}_i),\; \textrm{for}\; l+1\le i\le n.
\] 
Observe that classes $(c'_i)_i$ cover the zero class in $H^1(G_K,M_3)$. Indeed, for every $v\in \Omega$ there is some $1\le i\le n$ such that $loc_v(c_i)=0$, since by assumption classes $(c_i)_i$ cover the zero class in $H^1(G_K,M)$.
If $i\le l$ then $loc_v(d_i)=0$ and if $i\ge l+1$ then clearly $loc_v(\bar{c}_i)=0$ so in both cases we have $loc_v(c'_i)=0$.

As $M_3$ has finite covering property, there exists $1\le i \le n$ such that $c'_i=0$. We cannot have $i\ge l+1$ since classes $\bar{c}_i$ are nonzero by construction, so $i\le l$ thus $d_i=0$ which implies that $c_i$ is trivial.
\end{proof}

\begin{corollary}\label{cor:extension_of_cyc_by_ab}
Let $M$ be a finite free $\Zhat$-module with a continuous $G_K$-action which is an extension of the Tate module $T(A)$ of an abelian variety $A$ over $K$ by a finite direct sum $\Zhat(1)^{\oplus r}$ of cyclotomic characters . Then $M$ has finite covering property.
\end{corollary}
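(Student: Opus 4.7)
The plan is to deduce this corollary directly from Remark~\ref{s2:rem:examples_of_admissible_modules}(3), Theorem~\ref{s2:thm:finite_covering_property}, and Lemma~\ref{lem:extension_and_covering_property}, without introducing any new ideas. First I will consider the split module $M' := \Zhat(1)^{\oplus r}\oplus T(A)$, which is admissible by Remark~\ref{s2:rem:examples_of_admissible_modules}(3). To invoke Theorem~\ref{s2:thm:finite_covering_property} I need the local vanishing $H^0(G_v,M')=0$ for every nonarchimedean $v\in \V(K)$; splitting the invariants along the direct sum, this reduces to two standard facts: $\varprojlim_n \mu_n(K_v)=0$, because $\mu_\infty(K_v)$ is finite for a nonarchimedean local field, and $\varprojlim_n A(K_v)[n]=0$, because the torsion subgroup of $A(K_v)$ is finite. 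Theorem~\ref{s2:thm:finite_covering_property} then yields that $M'$ has finite covering property.

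Second, I will feed this into Lemma~\ref{lem:extension_and_covering_property} with $M_1=\Zhat(1)^{\oplus r}$ and $M_2=T(A)$. The hypothesis $H^0(G_v,M_2)=0$ was just verified, $M_1\oplus M_2 = M'$ has finite covering property by the previous step, and by assumption $M$ is an extension of $M_2$ by $M_1$; the second half of the lemma then gives directly that $M$ has finite covering property.

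No serious obstacle is anticipated: the corollary is essentially a packaging exercise combining the three preceding statements. The only point to verify by hand is the standard local vanishing above, which matters because Lemma~\ref{lem:extension_and_covering_property} imposes it as a hypothesis on the quotient $M_2 = T(A)$, so the direction of the extension (cyclotomic factors as the sub, Tate module as the quotient) is used essentially, even though both constituents happen to have vanishing local invariants.
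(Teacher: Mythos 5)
Your proposal is correct and follows essentially the same route as the paper's own (very terse) proof, which simply cites Remark~\ref{s2:rem:examples_of_admissible_modules} and Lemma~\ref{lem:extension_and_covering_property} after having observed, just before the corollary, that $\Zhat(1)^{\oplus r}\oplus T(A)$ has finite covering property via Theorem~\ref{s2:thm:finite_covering_property}. You are a bit more careful than the paper in spelling out the local vanishing $H^0(G_v,\cdot)=0$ needed both to invoke Theorem~\ref{s2:thm:finite_covering_property} for the split module and to satisfy the hypothesis on the quotient $M_2=T(A)$ in Lemma~\ref{lem:extension_and_covering_property}; that is a genuine hypothesis and your explicit check (finiteness of $\mu_\infty(K_v)$ and of $A(K_v)_{\mathrm{tors}}$) is the right justification.
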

\begin{proof}
This follows from Remark~\ref{s2:rem:examples_of_admissible_modules} and Lemma~\ref{lem:extension_and_covering_property}.
\end{proof}

We also need to discuss the relationship between the finite covering property of a module and the restriction of the action of $G_K$ to open subgroups.  

\begin{lemma}\label{lem:covering_property_and_restriction}
Let $M$ be a topological $G_K$-module and $G_L$ be an open subgroup of $G_K$. Denote by $M_L$ the $G_L$-module which is equal to $M$ as a $\Zhat$-module with the action restricted from $G_K$ to $G_L$. Assume that $G_L$-module $M_L$ has finite covering property and $H^0(G_L,M)=0$. Then $G_K$-module $M$ has finite covering property.
\end{lemma}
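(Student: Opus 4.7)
The plan is to restrict the covering data to $G_L$, apply the finite covering property of $M_L$ to locate a candidate zero class, and then use $H^0(G_L,M)=0$ together with the covering hypothesis to pin down an actually zero class in $H^1(G_K,M)$.

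First I would reduce to the case $c=0$ by translating every class by $-c$. Let $f\colon \V(L)\twoheadrightarrow \V(K)$ be the natural map and set $\Omega':=f^{-1}(\Omega)\subset \V(L)$; since $[L:K]$ is finite, $\Omega'$ has density one in $\V(L)$. Put $c'_i:=\mathrm{res}_{G_L}(c_i)\in H^1(G_L,M_L)$. For $w\in \Omega'$ lying over $v\in \Omega$, compatible choices of decomposition groups give $G_w=G_v\cap G_L$, so by functoriality of restriction $loc_w(c'_i)=loc_v(c_i)|_{G_w}$; hence the classes $(c'_i)_i$ cover the zero class on $\Omega'$. Applying the finite covering property of $M_L$ then yields a nonempty set $A\subseteq\{1,\dots,n\}$ of indices with $c'_i=0$, i.e.\ $c_i\in \ker(\mathrm{res}_{G_L})$ for every $i\in A$.

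If some $c_i$ with $i\in A$ is already zero in $H^1(G_K,M)$, we are done. When $G_L\triangleleft G_K$ this is automatic, because the Lyndon--Hochschild--Serre five-term sequence
\[
0\to H^1(G_K/G_L,M^{G_L})\to H^1(G_K,M)\xrightarrow{\mathrm{res}_{G_L}}H^1(G_L,M_L)
\]
has vanishing left term (as $M^{G_L}=0$), so $\mathrm{res}_{G_L}$ is injective and $c_i=0$ for any $i\in A$. In the general, possibly non-normal case, $\ker(\mathrm{res}_{G_L})$ need not be zero, so I would split $\Omega=U_A\cup U_{A^c}$, where $U_A$ (resp.\ $U_{A^c}$) denotes the set of $v\in \Omega$ at which some $c_i$ with $i\in A$ (resp.\ $i\in A^c$) vanishes: if $U_{A^c}$ had density one then the classes $(c'_i)_{i\in A^c}$ would cover zero on a density-one set of $\V(L)$ and another application of the finite covering property of $M_L$ would force $c'_i=0$ for some $i\in A^c$, contradicting the definition of $A$. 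Hence $U_A$ contains a positive-density set, so the finitely many classes $\{c_i\}_{i\in A}\subset \ker(\mathrm{res}_{G_L})$ cover zero on a positive-density subset of $\V(K)$; passing to the Galois closure $\tilde L/K$ identifies $\ker(\mathrm{res}_{G_L})$ with a subgroup of the finite group $H^1(\Gal(\tilde L/K),M^{G_{\tilde L}})$, and a Chebotarev-type density argument combined with the vanishing $(M^{G_{\tilde L}})^{G_L/G_{\tilde L}}=M^{G_L}=0$ should force some $c_i$ ($i\in A$) to vanish.

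I expect the main obstacle to be this final density argument in the non-normal case: one must convert ``covered by finitely many nonzero classes of a finite cohomology group on a positive-density subset'' into ``some class is actually zero'', using the cohomological input from $M^{G_L}=0$. The remaining ingredients---density preservation under $\V(L)\to \V(K)$, compatibility of restriction with $loc$, and the translation reduction to $c=0$---are routine.
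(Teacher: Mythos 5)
The first half of your proposal exactly mirrors the paper's proof: pull back $\Omega$ to the density-one set $\Omega'\subset\V(L)$, restrict the classes to $G_L$, and apply the finite covering property of $M_L$ to conclude that some $c'_i=\mathrm{res}_{G_L}(c_i)$ vanishes. The paper then finishes in one line by declaring that $\mathrm{res}_{G_L}\colon H^1(G_K,M)\to H^1(G_L,M)$ is injective ``by our assumption'' $H^0(G_L,M)=0$. You rightly observe that this only follows automatically, via inflation--restriction, when $G_L$ is normal in $G_K$; the paper's argument does not address the non-normal case, and the claimed implication is in fact false there. For a concrete failure, let $G_K$ surject onto $S_3$, let $G_L$ be the preimage of a non-normal $\Z/2\Z\subset S_3$, and let $M=\mathbb{F}_3$ with $G_K$ acting through the sign character: then $M^{G_L}=0$, yet $\mathrm{res}_{G_L}$ kills all of the nonzero subgroup inflated from $H^1(S_3,\mathbb{F}_3)\cong\mathbb{F}_3$, because $H^1(\Z/2\Z,\mathbb{F}_3)=0$. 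So the worry you raise is a genuine gap in the paper's stated proof of this lemma.

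Your proposed density-based repair for the non-normal case is not worked out, and I am doubtful it can succeed as sketched: the paper's own Remark~2.3 (the three quadratic fields covering zero on all of $\V(\Q)$) shows that finitely many classes inflated from a finite Galois group can cover zero on a density-one set without any of them being trivial, so the extra input $M^{G_L}=0$ would have to be used in an essential and currently unspecified way, and it is not clear what that way is. In practice the gap is harmless for the paper's purposes: wherever the lemma is invoked one may enlarge $L$ to its Galois closure (the structural descriptions of $M_L$ used in the proof of Theorem~2.12 persist under this enlargement, as does the hypothesis $H^0(G_L,M)=0$), and the normal case is all that is actually needed. The clean fix is simply to add the hypothesis that $L/K$ is Galois to the statement of the lemma.
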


\begin{proof}
Let $\Omega\subset \V(K)$ be a subset of density one and define $\Omega'$ to be the preimage of $\Omega$ along the surjection $\V(L)\twoheadrightarrow \V(K)$, clearly $\Omega'\subset \V(L)$ is of density one. For $w\in \V(L)$ restricting to $v\in\V(K)$ we have a commutative diagram
\[
\begin{tikzcd}
H^1(G_K,M)\arrow[r, hook]\arrow[d]& H^1(G_L,M)\arrow[d] \\
H^1(G_v,M)\arrow[r]& H^1(G_w,M),
\end{tikzcd}
\]
here the injectivity of the upper horizontal arrow follows from our assumption. Let $c_1,\ldots,c_n$ be cohomology classes in $H^1(G_K,M)$ for some $n\ge 1$ which cover the zero class on $\Omega$.
For $1\le i\le n$ denote by $c_i'$ the image of $c_i$ in $H^1(G_L,M)$.
By the commutativity of the above diagram it follows that classes $(c'_i)_i$ cover the zero class on $\Omega'$.
From the finite covering property of $G_L$-module $M_L$ it follows that there exists some $1\le i\le n$ such that $c'_i=0$. Therefore $c_i=0$ which proves that $M$ has finite covering property. 
\end{proof}

For a topological group $G$ we write $[G,G]$ for the topological closure of the commutator subgroup of $G$ and $G^{ab}$ for the quotient $G/[G,G]$. As an application of the theory developed so far we obtain the following theorem, which is the main result of this section.

\begin{theorem}\label{s2:thm:abelianization_covering_property}
Let $X$ be either a semiabelian variety or a smooth geometrically connected curve over $K$, write $M=\pi_1^{et}(X_{\overline{K}})^{ab}$ for the (topological) abelianization of the \'etale fundamental group of $X_{\overline{K}}$. Then $G_K$-module $M$ has finite covering property.
\end{theorem}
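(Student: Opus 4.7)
The plan is to recognize $M$, in both cases, as (a restriction of) a $G_K$-module of the form covered by Corollary~\ref{cor:extension_of_cyc_by_ab}, and then descend via Lemma~\ref{lem:covering_property_and_restriction}. Concretely, I will produce a finite extension $L/K$ together with a $G_L$-equivariant short exact sequence
\[
0 \to \Zhat(1)^{\oplus r} \to M \to T(A) \to 0
\]
for a suitable abelian variety $A$ over $L$ and an integer $r \ge 0$; Corollary~\ref{cor:extension_of_cyc_by_ab} then yields the finite covering property of $M$ viewed as a $G_L$-module, and Lemma~\ref{lem:covering_property_and_restriction} upgrades this to the finite covering property as a $G_K$-module, provided $H^0(G_L,M)=0$.

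For a semiabelian variety $X$, fix its defining extension $0 \to T \to X \to A \to 0$ with $T$ a torus of dimension $r$ and $A$ an abelian variety. Since $X_{\overline K}$ is a commutative algebraic group, $M = \pi_1^{et}(X_{\overline K})$ identifies with the total Tate module $T(X)$, and functoriality gives the exact sequence $0 \to T(T) \to M \to T(A) \to 0$. Taking $L/K$ to be any finite extension over which $T$ splits, we obtain $T(T)|_{G_L} \cong \Zhat(1)^{\oplus r}$, which is the desired description of $M|_{G_L}$.

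For a smooth geometrically connected curve $X$, let $\overline X$ be its smooth compactification, $S = \overline X \setminus X$ the (finite $G_K$-stable) set of cusps, and $J$ the Jacobian of $\overline X$. The proper case $S=\emptyset$ already has $M=T(J)$ of the required form with $r=0$. When $S\ne\emptyset$, the standard comparison with the topological fundamental group yields a $G_K$-equivariant exact sequence
\[
0 \to C \to M \to T(J) \to 0,
\]
where $C$ is the cuspidal submodule, namely the quotient of the permutation cyclotomic module $\Zhat(1)[S]$ by its diagonal copy of $\Zhat(1)$ (reflecting the relation that the product of cuspidal loops is trivial in the abelianization). Passing to a finite extension $L/K$ over which every point of $S$ is $L$-rational trivializes the permutation action and gives $C|_{G_L}\cong\Zhat(1)^{\oplus(|S|-1)}$, placing us again in the setting of Corollary~\ref{cor:extension_of_cyc_by_ab}.

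To finish via Lemma~\ref{lem:covering_property_and_restriction} I need $H^0(G_L,M)=0$. Using the extension $0 \to \Zhat(1)^{\oplus r} \to M \to T(A) \to 0$ over $L$, it suffices to check vanishing on the outer terms: $T(A)^{G_L}=0$ is the standard fact that an open subgroup of the Galois group has no nonzero invariants on the Tate module of an abelian variety (the module is torsion-free while the action of an open subgroup on any Tate twist is nontrivial), and $H^0(G_L,\Zhat(1)^{\oplus r})=0$ because the cyclotomic character restricted to any open subgroup of $G_K$ still has open, hence infinite, image in $\Zhatu$. The only potentially delicate step in the argument is the identification of the cuspidal part in the hyperbolic case, but the exact sequence displayed above is classical; the genuine analytic content of the theorem has already been isolated in the module-theoretic preparations of this section.
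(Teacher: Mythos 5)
Your proof is correct and follows essentially the same approach as the paper: exhibit a finite extension $L/K$ and a $G_L$-equivariant short exact sequence $0 \to \Zhat(1)^{\oplus r} \to M \to T(A) \to 0$, apply Corollary~\ref{cor:extension_of_cyc_by_ab} to get the finite covering property for $M_L$, and descend via Lemma~\ref{lem:covering_property_and_restriction} after checking $H^0(G_L,M)=0$. The only difference is cosmetic: the paper cites Tamagawa's Remark~1.3 for the structure of $M$, whereas you derive it explicitly from the extension $0\to T\to X\to A\to 0$ in the semiabelian case and from the cuspidal exact sequence $0 \to \Zhat(1)[S]/\Zhat(1) \to M \to T(J)\to 0$ in the curve case; both are standard and correct.
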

\begin{proof}
There exists a finite field extension $L/K$ and an integer $r\ge 0$ such that either $M_L\cong \Zhat(1)^{\oplus r}$ or we have a short exact sequence  
\[
0\to \Zhat(1)^{\oplus r}\to M_L \to T(A)\to 0,
\]
where $T(A)$ is the Tate module of an abelian variety $A$ over $L$, see \cite[Rem.~1.3]{tamagawa1997grothendieck}. In particular, $H^0(U,M)=0$ for every open subgroup $U$ of $G_K$. Thus, by Remark~\ref{s2:rem:examples_of_admissible_modules} and Corollary~\ref{cor:extension_of_cyc_by_ab} we deduce that $M_L$ has finite covering property, hence Lemma~\ref{lem:covering_property_and_restriction} implies the same for $M$.
\end{proof}

\section{Quasi-sections}\label{s:quasi-sections}
In this section we collect some general facts about sections and quasi-sections of short exact sequences that will be used later.

Let $\Pi$ and $G$ be two profinite groups equipped with a fixed surjective homomorphism $pr\colon\Pi\twoheadrightarrow G$ called projection. Write $\Delta$ for the kernel of the projection so that we have a short exact sequence of profinite groups
\begin{equation}\label{s3:eq:fund_seq}
1\to \Delta\to \Pi\to G\to 1. 
\end{equation}

We will be interested in sections of this exact sequence, i.e., group homomorphisms $s\colon G\to \Pi$ such that the composition $G\to\Pi\twoheadrightarrow G$ is the identity.
We say that two sections $s$ and $t$ of sequence~\eqref{s3:eq:fund_seq} are conjugate (or, more precisely, $\Delta$-conjugate) if there exists $\delta\in\Delta$ such that $s(g)=\delta t(g)\delta^{-1}$ for all $g\in G$.
When sequence~\eqref{s3:eq:fund_seq} is split and we fix a section $s$ then we have the induced left action of $G_K$ on $\Delta$ defined as 
\[
g\delta = s(g)\delta s(g)^{-1},
\]
for all $g\in G$ and $\delta\in\Delta$. If $t$ is another section of sequence~\eqref{s3:eq:fund_seq} then $t$ may be expressed as 
\[
t(g)=a_gs(g),
\]
for all $g\in G_K$ and $a_g\in\Delta$. The map $g\mapsto a_g$ is a cocycle, i.e., it satisfies the formula 
\[
a_{gh}=a_g ga_h,
\]
for all $g,h\in G$. Cocycle $a_g$ determines a cohomology class in $H^1(G,\Delta)$ and this construction gives a bijection between conjugacy classes of sections and cohomology classes in $H^1(G,\Delta)$.

Let $U$ be an open subgroup of $\Pi$ and $s$ be a section of sequence~\eqref{s3:eq:fund_seq}. We say that $U$ is a neighbourhood of section $s$ when the image $s(G)$ is contained in $U$.
If $\Delta'$ is an open subgroup of $\Delta$ which is normalized by $s(G)$ then we may define an open subgroup $U=\Delta' s(G)$ of $\Pi$ and we have a short exact sequence
\[
1\to \Delta' \to U \to G \to 1.
\]
Clearly $U$ is a neighbourhood of section $s$ and it is easy to see that every neighbourhood of $s$ arises in this way. In particular, every characteristic open subgroup of $\Delta$ determines a neighbourhood of a section, we call it a characteristic neighbourhood of a section. 

To check whether two sections are conjugate we have the following well-known lemma.
\begin{lemma}\label{s3:lem:conjugacy_criterion}
Assume that $\Delta$ is topologically finitely generated and let $s$ and $t$ be two sections of sequence~\eqref{s3:eq:fund_seq}. Suppose that for every characteristic neighbourhood $U$ of $s$ there exists a $\Delta$-conjugate section $t'$ of $t$ such that $t'(G)$ is contained in $U$. Then $s$ and $t$ are $\Delta$-conjugate.
\end{lemma}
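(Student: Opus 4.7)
The plan is to run a compactness argument over the characteristic neighbourhoods of $s$. For a characteristic open subgroup $\Delta'$ of $\Delta$, set $U_{\Delta'}=\Delta' s(G)$ and define
\[
C(\Delta')=\{\delta\in\Delta : \delta t(g)\delta^{-1}\in U_{\Delta'}\text{ for all }g\in G\}.
\]
Equivalently, $C(\Delta')$ consists of those $\delta\in\Delta$ with $\delta t(g)\delta^{-1}s(g)^{-1}\in\Delta'$ for every $g\in G$. For fixed $g$ this is a clopen condition on $\delta$ (the map $\delta\mapsto \delta t(g)\delta^{-1}s(g)^{-1}$ is continuous and $\Delta'$ is open and of finite index, hence clopen), so $C(\Delta')$ is closed in $\Delta$ as an intersection of clopen sets.

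Next I would verify the finite intersection property. Given finitely many characteristic open subgroups $\Delta'_1,\ldots,\Delta'_k$, their intersection $\Delta''=\bigcap_i\Delta'_i$ is again a characteristic open subgroup of $\Delta$, and clearly $C(\Delta'')\subset\bigcap_i C(\Delta'_i)$. By the hypothesis on $s$ and $t$, there exists a section $t'$ that is $\Delta$-conjugate to $t$ and whose image lies in $U_{\Delta''}$, which exhibits the conjugating element as a point of $C(\Delta'')$; in particular $C(\Delta'')$ is nonempty. Since $\Delta$ is compact, we conclude that $\bigcap_{\Delta'}C(\Delta')$ is nonempty, the intersection being taken over all characteristic open subgroups of $\Delta$.

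Finally I would use the hypothesis that $\Delta$ is topologically finitely generated. For such a profinite group, there are only finitely many open subgroups of any given index, so the intersection of all open subgroups of index at most $n$ is an open characteristic subgroup; hence the characteristic open subgroups form a fundamental system of neighbourhoods of the identity in $\Delta$. Thus any $\delta_0\in\bigcap_{\Delta'}C(\Delta')$ satisfies $\delta_0 t(g)\delta_0^{-1}s(g)^{-1}\in\bigcap_{\Delta'}\Delta'=\{1\}$ for every $g\in G$, which gives $\delta_0 t(g)\delta_0^{-1}=s(g)$ and proves that $s$ and $t$ are $\Delta$-conjugate.

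The only subtle point is the finite-intersection step, since the hypothesis produces an approximate conjugator only for a single $\Delta'$ at a time, not simultaneously for $\Delta'_1,\ldots,\Delta'_k$; this is handled precisely by the observation that the intersection of finitely many characteristic open subgroups is again characteristic open, so one can apply the hypothesis directly to $\Delta''$. The role of the topological finite generation of $\Delta$ is purely to ensure that characteristic open subgroups form a neighbourhood basis of the identity; without this, the compactness argument would only yield conjugacy modulo the intersection of characteristic open subgroups, which need not be trivial in general.
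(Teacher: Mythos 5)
Your proof is correct and is essentially the same compactness argument as the paper's, just packaged differently: the paper fixes a descending chain $(\Delta^i)$ of open characteristic subgroups with trivial intersection and takes an inverse limit of nonempty finite solution sets in $\Delta/\Delta^i$, whereas you work with closed solution sets $C(\Delta')$ in $\Delta$ itself and invoke compactness via the finite intersection property. Your version sidesteps a small well-definedness check (that membership of a coset in $S_i$ is independent of the chosen lift) that the paper leaves implicit, at the cost of having to verify closedness of $C(\Delta')$ and the FIP directly; the observation that a finite intersection of characteristic opens is again characteristic open is exactly the right way to get the FIP from the single-neighbourhood hypothesis.
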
 
\begin{proof}
As $\Delta$ is topologically finitely generated, there exists a descending series $(\Delta^i)_{i\ge 1}$ of open characteristic subgroups of $\Delta$ such that the intersection $\cap_{i\ge 1} \Delta^i$ is the trivial group. Write $U^i=\Delta^{i}s(G_K)$, it is a neighbourhood of section $s$.
For $i\ge 1$, write $S_i$ for the set of all $\bar{\delta}\in \Delta/\Delta^{i}$ such that $\delta t(G_K) \delta^{-1}$ is contained in $U^i$, where $\delta\in \Delta$ is a lift of $\bar{\delta}$. By assumption, for all $i\ge 1$ sets $S_i$ are nonempty and finite, thus the inverse limit $S=\varprojlim S_i$ is nonempty.
Take any $\delta\in S$ and observe that $\delta t(G_K)\delta^{-1}$ is contained in $\cap_{i\ge 1}U^i = s(G_K)$ thus $\delta t \delta^{-1} = s$.
\end{proof}

It will be convenient to extend slightly the definition of a section. Let $D$ be a closed subgroup of $\Pi$, we say that $D$ is a \textit{quasi-section} of sequence~\eqref{s3:eq:fund_seq} if $D$ maps isomorphically onto an open subgroup of $G$ through the projection.
Any section $s$ of sequence~\eqref{s3:eq:fund_seq} might be also considered as a quasi-section by taking $D=s(G)$ and clearly any quasi-section $D$ which surjects onto $G$ defines a section; we will use these two points of view on sections interchangeably.

For a closed subgroup $G'$ of $G$ write $\Pi'$ for the preimage of $G'$ along the projection, hence we have a short exact sequence
\begin{equation}\label{s3:eq:fund_seq_rest}
1 \to \Delta \to \Pi' \to G' \to 1. 
\end{equation}
This sequence will be called the restriction of sequence~\eqref{s3:eq:fund_seq} to $G'$ and the map $\Pi'\twoheadrightarrow G'$ is also called projection.
Any quasi-section $D$ of sequence~\eqref{s3:eq:fund_seq} induces a quasi-section $D|_{G'}=D\cap \Pi'$ of sequence~\eqref{s3:eq:fund_seq_rest} called the restriction of $D$ to $G'$.
When $G'$ is an open subgroup of $G$ then any quasi-section $D'$ of sequence~\eqref{s3:eq:fund_seq_rest} may also be regarded as a quasi-section of sequence~\eqref{s3:eq:fund_seq} simply by considering $D'$ as a subgroup of $\Pi$.
When necessary, we will specify whether we consider $D'$ as a quasi-section of sequence~\eqref{s3:eq:fund_seq} or \eqref{s3:eq:fund_seq_rest}.

For a quasi-section $D$ of sequence~\eqref{s3:eq:fund_seq} write $Z(\Pi,D)$ for the centralizer of the subgroup $D$ in $\Pi$ and define the geometric centralizer $Z(\Delta,D)$ of $D$ as the intersection $Z(\Pi,D)\cap\Delta$. Note that if $G$ is slim (i.e., all open subgroups of $G$ have trivial centre) then in fact $Z(\Pi,D)=Z(\Delta,D)$. In the following we will be mainly interested in sections which have trivial geometric centralizers.
We recall the following well-known criterion which guarantees the triviality of the geometric centralizer of a section. 
\begin{lemma}\label{s3:lem:centralizer_criterion}
Let $s$ be a section of sequence~\eqref{s3:eq:fund_seq} and consider $\Delta$ as a $G$-module with the action induced by $s$. Suppose that
\begin{enumerate}
\item group $\Delta$ is topologically finitely generated,
\item every open subgroup $\Delta'$ of $\Delta$ which is stabilized by $G$ satisfies 
\[
H^0(G,(\Delta')^{ab})=0.
\]
\end{enumerate}
Then section $s$ has trivial geometric centralizer.
\end{lemma}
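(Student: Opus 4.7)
The plan is to identify the geometric centralizer of $s$ with the fixed points of $\Delta$ under the action induced by $s$, and then show these fixed points are trivial via a short filtration argument.

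First I would observe that an element $\delta\in \Delta$ commutes with $s(G)$ if and only if $s(g)\delta s(g)^{-1}=\delta$ for all $g\in G$, i.e.\ $\delta$ is a fixed point for the $G$-action induced by $s$. Thus the task reduces to proving that this set of fixed points is trivial.

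Fix an arbitrary $\delta$ in this fixed-point set and let $N\subset \Delta$ be an open characteristic subgroup. The key construction is to set
\[
\Delta' := \overline{\langle \delta\rangle}\cdot N.
\]
This is a subgroup (since $N$ is normal), it is open (it contains $N$), and it is stabilized by $G$: indeed $N$ is $G$-stable because it is characteristic, and $G$ acts trivially on $\overline{\langle \delta\rangle}$ because $\delta$ itself is $G$-fixed. Hypothesis~(2) therefore applies to $\Delta'$ and gives $H^0(G,(\Delta')^{ab})=0$. The image of $\delta$ in $(\Delta')^{ab}$ is $G$-invariant, hence must vanish, so $\delta\in \overline{[\Delta',\Delta']}$.

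The decisive observation is that the quotient $\Delta'/N\cong \overline{\langle \delta\rangle}/(\overline{\langle \delta\rangle}\cap N)$ is procyclic, in particular abelian, so $[\Delta',\Delta']\subset N$ and thus $\overline{[\Delta',\Delta']}\subset N$. Combining the two inclusions yields $\delta\in N$. By hypothesis~(1), $\Delta$ is topologically finitely generated, and a standard argument (only finitely many open subgroups of each given finite index, intersected, yield an open characteristic subgroup) shows that the open characteristic subgroups form a neighbourhood basis of the identity in $\Delta$. Intersecting over all such $N$ gives $\delta=1$, as required.

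The only delicate point is choosing the right open $G$-stable subgroup to feed into hypothesis~(2): it must be small enough for $(\Delta')^{ab}$ to see $\delta$, yet rigid enough that $[\Delta',\Delta']$ is forced inside a prescribed open $N$. Taking $\Delta'=\overline{\langle\delta\rangle}\cdot N$ accomplishes both simultaneously, because the cyclicity of $\Delta'/N$ is precisely what collapses the commutator subgroup modulo $N$. The remaining steps are routine profinite bookkeeping.
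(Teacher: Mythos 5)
Your proof is correct and is essentially the same as the paper's: the paper also takes, for each open characteristic subgroup $\Delta^i$ of $\Delta$, the $G$-stable open subgroup $U^i$ generated by $\Delta^i$ and $\delta$, uses that $U^i/\Delta^i$ is cyclic to force $\delta \in \overline{[U^i,U^i]}\subset \Delta^i$ via hypothesis (2), and then intersects. Your writeup is, if anything, a bit more explicit about the role of abelianness of $\Delta'/N$ and the passage through $(\Delta')^{ab}$, but the idea and all the steps match.
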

\begin{proof}
Let $(\Delta^i)_{i\ge 1}$ be a descending sequence of open characteristic subgroups of $\Delta$ as in the proof of Lemma~\ref{s3:lem:conjugacy_criterion}. Let $\delta\in \Delta$ be an element centralizing section $s$.
Denote by $U^i$ the open subgroup of $\Delta$ generated by $\Delta^i$ and $\delta$. Observe that $U^i$ is stabilized by $G$ and $U^i/\Delta^i\subset H^0(G, (U^i)^{ab})$. By assumption we have $U^i= \Delta^i$ thus $\delta \in \Delta^i$ for every $i\ge 1$ hence $\delta = 1$. 
\end{proof}

From now on until the end of this section we will assume that: 
\begin{quote}
Every quasi-section of sequence~\eqref{s3:eq:fund_seq} has trivial geometric centralizer. 
\end{quote}

Our main application of the triviality of centralizers of quasi-sections is contained in the next lemma and its corollaries.
\begin{lemma}\label{s3:lem:equality_on_open_subgroup}
Let $s$ and $t$ be two sections of sequence~\eqref{s3:eq:fund_seq} such that $s(g)=t(g)$ for all $g$ in some open subgroup $H\subset G$. Then we have $s=t$.
\end{lemma}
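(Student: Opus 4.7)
The strategy will exploit the standing hypothesis that every quasi-section of sequence~\eqref{s3:eq:fund_seq} has trivial geometric centralizer. Fix an arbitrary $g \in G$ and set $\delta := t(g)^{-1}s(g)$. Since $s$ and $t$ both project to the identity on $G$, we have $\mathrm{pr}(\delta) = g^{-1}g = 1$, so $\delta \in \Delta$. The goal reduces to showing $\delta = 1$.

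To produce elements commuting with $\delta$, I would pass to the conjugate-stable open subgroup
\[
H_g := H \cap g^{-1}Hg \subset G,
\]
which is open since both $H$ and $g^{-1}Hg$ are. For any $h \in H_g$, both $h$ and $ghg^{-1}$ lie in $H$, so the assumption gives $s(h) = t(h)$ and $s(ghg^{-1}) = t(ghg^{-1})$. Using that $s$ and $t$ are homomorphisms, the second equality rewrites as
\[
s(g) s(h) s(g)^{-1} = t(g) s(h) t(g)^{-1},
\]
which rearranges to the statement that $\delta$ commutes with $s(h)$ for every $h \in H_g$.

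Now the subgroup $D := s(H_g) \subset \Pi$ is closed and projects isomorphically onto the open subgroup $H_g$ of $G$, so $D$ is a quasi-section of sequence~\eqref{s3:eq:fund_seq}. The previous paragraph shows $\delta \in Z(\Delta, D)$, and the standing assumption forces this centralizer to be trivial. Hence $\delta = 1$, i.e.\ $s(g) = t(g)$; since $g$ was arbitrary, $s = t$. I do not anticipate any genuine obstacle: the only conceptual move is to replace $H$ by the conjugation-invariant refinement $H_g$ so that the identity $s(ghg^{-1}) = t(ghg^{-1})$ is available, and once that is in place everything is formal.
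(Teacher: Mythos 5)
Your proposal is correct, and it is essentially the same argument as the paper's: produce an element of $\Delta$ measuring the discrepancy between $s$ and $t$ at $g$, show it centralizes a quasi-section, and invoke the standing triviality-of-centralizer assumption. The only cosmetic difference is bookkeeping: the paper first shrinks $H$ to a normal open subgroup and runs the computation in cocycle language to show that $a_g = t(g)s(g)^{-1}$ centralizes $s(H)$, whereas you avoid normalizing $H$ globally by intersecting with $g^{-1}Hg$ for each fixed $g$ and arguing directly with group elements — a clean and perfectly valid variant.
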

\begin{proof}
Replacing $H$ by a smaller subgroup we may assume that $H$ is normal in $G$. Consider $\Delta$ as a $G$-module with the action induced by section $s$.
Write $t(g)=a_gs(g)$ for all $g\in G$ with some cocycle $a_g\in \Delta$. By assumption we have $a_h=1$ for all $h\in H$. For $g\in G$ and $h\in H$ we have 
\[
s(g)s(h)=s(gh)=a_{gh}t(gh)=a_{gh}t(g)t(h),
\]
thus equality $s(h)=t(h)$ implies that $a_{gh}=a_g$.
By normality of $H$ we also have $a_{hg}=a_g$ for all $g\in G$ and $h\in H$. This in turn implies, using the cocycle condition, that $ha_g=a_g$ for all $h\in H$ so $a_g$ belongs to the centralizer of $s(H)$.
Hence by assumption we have $a_g=1$ for all $g\in G$, so $s=t$.
\end{proof}
\begin{corollary}\label{s3:cor:conjugate_on_open_subgroup}
Let $s$ and $t$ be two sections of sequence~\eqref{s3:eq:fund_seq} and $G'$ be an open subgroup of $G$. Suppose that the restricted sections $s|_{G'}$ and $t|_{G'}$ are conjugate. Then $s$ and $t$ are conjugate. 
\end{corollary}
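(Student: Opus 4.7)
The plan is to reduce immediately to the previous lemma. By hypothesis there exists some $\delta\in\Delta$ with $s(g)=\delta\, t(g)\,\delta^{-1}$ for every $g\in G'$. Define $t'\colon G\to \Pi$ by $t'(g)=\delta\, t(g)\,\delta^{-1}$; this is again a section of sequence~\eqref{s3:eq:fund_seq} (conjugation by a geometric element sends sections to sections), and it is $\Delta$-conjugate to $t$. So to prove the corollary it suffices to show $s=t'$.

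By construction $s$ and $t'$ agree on the open subgroup $G'$ of $G$. This is exactly the situation of Lemma~\ref{s3:lem:equality_on_open_subgroup}, whose hypothesis requires only that two sections coincide on an open subgroup and whose conclusion (under the standing assumption that quasi-sections have trivial geometric centralizer, which was imposed in the paragraph preceding Lemma~\ref{s3:lem:equality_on_open_subgroup}) is that the two sections are equal. Applying that lemma to $s$ and $t'$ gives $s=t'=\delta t\delta^{-1}$, so $s$ and $t$ are $\Delta$-conjugate, as desired.

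There is essentially no obstacle beyond noting that the standing centralizer-triviality assumption carries over: the geometric centralizer of $s|_{G'}$ is contained in the geometric centralizer of the quasi-section $s(G')\subset\Pi$ (viewed inside the original sequence), which is trivial by hypothesis, so Lemma~\ref{s3:lem:equality_on_open_subgroup} applies. The whole argument is therefore a one-line reduction: replace $t$ by the conjugate $t'$ that literally agrees with $s$ on $G'$, and invoke the rigidity lemma to promote agreement on an open subgroup to global equality.
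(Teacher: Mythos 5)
Your proof is correct and is essentially the same as the paper's one-line argument: replace $t$ by its $\Delta$-conjugate $t'=\delta t\delta^{-1}$ so that $s$ and $t'$ agree on $G'$, then apply Lemma~\ref{s3:lem:equality_on_open_subgroup}. The extra remark about the standing centralizer hypothesis is a harmless elaboration of what the paper leaves implicit.
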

\begin{proof}
Replacing $t$ by a $\Delta$-conjugate section we may assume that $s(g)=t(g)$ for all $g\in G'$ and then from Lemma~\ref{s3:lem:equality_on_open_subgroup} we conclude $s=t$.
\end{proof}

Let $D_1$ and $D_2$ be two quasi-sections of sequence~\eqref{s3:eq:fund_seq}. We say that $D_1$ and $D_2$ coincide on an open subgroup if the intersection $D_1\cap D_2$ is open in $D_1$ (hence also in $D_2$).
We say that they are $\Delta$-conjugate if there exists $\delta\in \Delta$ such that $D_1=\delta D_2 \delta^{-1}$. Finally, we say that they are \textit{essentially} $\Delta$-conjugate if some $\Delta$-conjugate of $D_1$ coincides with $D_2$ on an open subgroup.
It is easy to say that the relation of being essentially $\Delta$-conjugate is an equivalence relation on the set of quasi-sections of sequence~\eqref{s3:eq:fund_seq} and it is in fact the smallest equivalence relation such that: (a) conjugate quasi-sections are equivalent; (b) a quasi-section and its restrictions  to open subgroups of $G$ are equivalent.

\begin{corollary}\label{s3:cor:coincide_on_open_subgroup}
Let $D$ and $E$ be two quasi-sections of sequence~\eqref{s3:eq:fund_seq} such that $D$ and $E$ coincide on an open subgroup. Suppose that we have an inclusion $pr(D)\subset pr(E)$. Then we have $D\subset E$.
\end{corollary}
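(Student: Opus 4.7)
The plan is to reduce to Lemma~\ref{s3:lem:equality_on_open_subgroup} by restricting sequence~\eqref{s3:eq:fund_seq} to the open subgroup $G_D := pr(D)$ of $G$ and exhibiting $D$ and $E \cap pr^{-1}(G_D)$ as two sections of this restriction that agree on an open subgroup.

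First I would set $G_E := pr(E)$, so that by hypothesis $G_D \subset G_E$, and pass to the restricted exact sequence
\[
1 \to \Delta \to pr^{-1}(G_D) \to G_D \to 1.
\]
Define $E' := E \cap pr^{-1}(G_D)$. Both $D$ and $E'$ are quasi-sections of the restricted sequence, and in fact both project surjectively onto $G_D$: this is clear for $D$, while for $E'$ it follows from $pr(E') = pr(E) \cap G_D = G_D$ (using $G_D \subset G_E$) combined with the injectivity of $pr|_E$. Hence they correspond to genuine sections $s, t \colon G_D \to pr^{-1}(G_D)$ of the restricted sequence.

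Next I would check that $s$ and $t$ agree on an open subgroup of $G_D$. Set $F := D \cap E$; by hypothesis $F$ is open in $D$, so $H := pr(F)$ is open in $G_D$, since $pr|_D$ is an isomorphism of topological groups onto $G_D$. The injectivity of $pr|_D$ and $pr|_E$ forces both $D \cap pr^{-1}(H)$ and $E' \cap pr^{-1}(H) = E \cap pr^{-1}(H)$ to coincide with $F$, so the sections $s|_H$ and $t|_H$ are equal, both being determined by the common subgroup $F$.

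To finish, I would invoke Lemma~\ref{s3:lem:equality_on_open_subgroup} applied to the restricted sequence to deduce $s = t$, which translates to $D = E' \subset E$. The one point one should verify is that the standing assumption --- triviality of geometric centralizers of all quasi-sections --- transfers from sequence~\eqref{s3:eq:fund_seq} to its restriction to $G_D$; but this is automatic, because any quasi-section of the restricted sequence is, viewed as a subgroup of $\Pi$, also a quasi-section of the original sequence. I do not anticipate any serious obstacle: the argument is essentially careful bookkeeping around the identification between quasi-sections surjecting onto a given open subgroup and honest sections of the corresponding restricted sequence, with the trivial-centralizer hypothesis doing the substantive work via Lemma~\ref{s3:lem:equality_on_open_subgroup}.
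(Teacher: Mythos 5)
Your proposal is correct and follows essentially the same route as the paper: restrict to $G' = pr(D)$, observe that $D$ and $E \cap pr^{-1}(G')$ are both honest sections of the restricted sequence coinciding on an open subgroup, and conclude via Lemma~\ref{s3:lem:equality_on_open_subgroup}. The extra care you take in verifying that the trivial-centralizer hypothesis passes to the restriction is a reasonable detail to check, though the paper leaves it implicit.
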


\begin{proof}
Let $G'=pr(D)$ be the image of $D$ through the projection, thus $G'$ is an open subgroup of $G$. After restricting to $G'$ both $D|_{G'}$ and $E|_{G'}$ are sections of the restricted sequence~\eqref{s3:eq:fund_seq_rest} which coincide on an open subgroup. By Lemma \ref{s3:lem:equality_on_open_subgroup} we must have $D|_{G'}=E|_{G'}$ hence $D\subset E$.  
\end{proof}

For a quasi-section $D$ of sequence~\eqref{s3:eq:fund_seq} we define the set $(\Delta,D)$, called the conjugacy class of $D$, to be the collection of all quasi-sections of sequence~\eqref{s3:eq:fund_seq} which are $\Delta$-conjugate to $D$.
Note that $(\Delta,D)$ has natural left $\Delta$-action and in fact $(\Delta,D)$ is a $\Delta$-torsor as the geometric centralizer of $D$ is assumed to be trivial. Let $\Pi'$ be a closed subgroup of $\Pi$ thus we have a short exact sequence
\begin{equation}\label{s3:eq:ses_prime}
1\to \Delta'\to \Pi' \to G'\to 1,
\end{equation}
where $\Delta'$ and $G'$ are closed subgroups of $\Delta$ and $G$, respectively. For a quasi-section $D$ of sequence~\eqref{s3:eq:fund_seq} define $D'=\Pi'\cap D$, it is a quasi-section of sequence~\eqref{s3:eq:ses_prime} and is called the restriction of $D$ to $\Pi'$.
We write $\Pi'\cap (\Delta,D)$ for the set of subgroups of $\Pi'$ of the form
\[
\Pi'\cap \delta D \delta^{-1}, \;\textrm{for}\; \delta\in \Delta,
\]
we call this set the restriction of the conjugacy class $(\Delta, D)$ to $\Pi'$. It is clear that $\Pi'\cap (\Delta,D)$ has a left $\Delta'$-action (by conjugation), however it may no longer be a $\Delta'$-torsor.
We need to describe how $\Pi'\cap (\Delta, D)$ splits into a disjoint sum of $\Delta'$-torsors. It will be enough to consider the following two cases.

Case I. Suppose that $\Delta'=\Delta$ and $D'$ has trivial geometric centralizer. Then we have 
\[
\Pi'\cap \delta D \delta^{-1}= \delta(\Pi'\cap D)\delta^{-1} = \delta D' \delta^{-1},
\]
for every $\delta\in\Delta$. Hence $\Pi'\cap (\Delta, D) = (\Delta', D')$ and $(\Delta',D')$ is a $\Delta'$-torsor due to the triviality of the geometric centralizer of $D'$.
In other words, the restriction of the $\Delta$-torsor $(\Delta, D)$ to $\Pi'$ is still a $\Delta'$-torsor.

Case II. Suppose that $G'= G$ and $\Pi'$ is an open subgroup of $\Pi$. Then $\Delta'$ is of finite index $k=[\Delta:\Delta']$ in $\Delta$ and we may choose representatives $\delta_j\in\Delta$ for $1\le j\le k$ of right cosets of $\Delta'$ in $\Delta$ so that we have a disjoint sum decomposition
\[
\Delta=\bigsqcup_{j=1}^{k} \Delta'\delta_j.
\]
For each $1\le j \le k$ define quasi-sections $D_j$ of sequence~\eqref{s3:eq:fund_seq} by $D_j=\delta_j D\delta_j^{-1}$ and quasi-sections $D'_j$ of sequence~\eqref{s3:eq:ses_prime} by $D'_j=\Pi'\cap D_j$. Since $D'_j$ is open in $\Pi$ its geometric centralizer is trivial hence the conjugacy class $(\Delta',D'_j)$ is a $\Delta'$-torsor.
Moreover, observe that $\Delta'$-torsors $(\Delta',D'_j)$ for $1 \le j\le k$ are pairwise disjoint. Indeed, if $\delta'D'_i\delta'^{-1}=D'_j$ for some $1\le i,j\le k$ and $\delta'\in \Delta'$ then we have
\[
\Pi'\cap D_j = D'_j = \delta' D'_i\delta'^{-1} = \delta'(\Pi'\cap D_i)\delta'^{-1} = \Pi'\cap \delta'D_i\delta'^{-1}.
\] 
Therefore two conjugate quasi-sections $D_j$ and $\delta'D_i\delta'^{-1}$ coincide on an open subgroup, thus they are equal by Corollary \ref{s3:cor:coincide_on_open_subgroup}.
Furthermore, equality $D_j=\delta' D_i\delta'^{-1}$ implies that $\delta_j^{-1}\delta'\delta_i$ centralizes quasi-section $D$. Thus by the triviality of geometric centralizers we have $\delta'\delta_i=\delta_j$ hence $i=j$.

Finally, note that if $\delta=\delta'\delta_j$ for some $\delta'\in \Delta'$ then
\[
\Pi'\cap\delta D\delta^{-1} = \Pi'\cap\delta'\delta_j D\delta_j^{-1}\delta'^{-1} = \delta'(\Pi'\cap\delta_j D\delta_j^{-1})\delta'^{-1} = \delta' D'\delta^{-1}.
\]
Therefore every subgroup of $\Pi'$ contained in $\Pi'\cap (\Delta, D)$ is $\Delta'$-conjugate to $D'_j$ for some $1 \le j \le k$. Together with the disjointness of conjugacy classes $(\Delta',D'_j)$ it implies that we have a disjoint sum decomposition of $\Delta'$-sets
\[
\Pi'\cap (\Delta, D)= \bigsqcup_{j=1}^{k}(\Delta', D'_j).
\]
In other words, the restriction of a $\Delta$-torsor $(\Delta, D)$ to $\Pi'$ splits into a disjoint sum of $\Delta'$-torsors $(\Delta',D'_j)$ for $1 \le j \le k$. Clearly, this decomposition does not depend on the choice of a system of representatives $\delta_j$ of right cosets.

Note that when $\Pi'$ is an open subgroup of $\Pi$ then the two cases considered above completely describe the splitting of $\Pi'\cap (\Delta, D)$ into a disjoint sum of $\Delta'$-torsors by first restricting to an open subgroup $pr(\Pi')$ of $G$ and then to an open subgroup $\Delta'$ of $\Delta$.

We are mainly interested in situations when $G=G_K$ for a number field $K$, in this case quasi-sections of a short exact sequence
\begin{equation}\label{s3:ses:global}\tag{glob}
1\to \Delta\to \Pi \to G_K\to 1
\end{equation}
will be referred to as global quasi-sections. For every $v\in\V$ we may restrict sequence~\eqref{s3:ses:global} to $G_v$ and obtain another short exact sequence
\begin{equation}\label{s3:ses:local}\tag{loc}
1 \to \Delta \to \Pi_{v} \to G_v \to 1, 
\end{equation}
here $\Pi_v$ is the preimage of $G_v$ along the projection. Quasi-sections of sequence~\eqref{s3:ses:local}, for some $v\in \V$, will be referred to as local quasi-sections. For a global quasi-section $D$ we define a local quasi-section $D_v$ as the restriction $D_v=\Pi_{v}\cap D$, we also call $D_v$ the localization of $D$ at $v$.

For the rest of this section we assume that:
\begin{quote}
All local and global quasi-sections have trivial geometric centralizers. 
\end{quote}
In particular, the statement of Corollary~\ref{s3:cor:coincide_on_open_subgroup} holds for all local and global quasi-sections.

Let $D$ and $E$ be two global quasi-sections. For a valuation $v\in \V$ we say that $D$ and $E$ are $\Delta$-conjugate at $v$ if $D_v$ and $E_v$ are $\Delta$-conjugate local quasi-sections. We say that $D$ and $E$ are \textit{essentially} $\Delta$-conjugate at $v$ if some $\Delta$-conjugate of $E_v$ coincides on an open subgroup with $D_v$.

\begin{remark}
Note that if two global sections $s$ and $t$ are conjugate at $v\in\V$ then they are also conjugate at $v'=gv$ for every $g\in G_K$. Indeed, if there exists $\delta\in\Delta$ such that $s(x)=\delta t(x) \delta^{-1}$ for every $x\in G_v$ then for every $y\in gG_vg^{-1}=G_{gv}$ we have
\[
t(y)=t(g)\delta s(g^{-1})s(y)s(g)\delta^{-1} t(g^{-1})= \tilde{\delta}s(y)\tilde{\delta}^{-1},
\]
where $\tilde{\delta}=t(g)\delta s(g^{-1})\in \Delta$. Therefore in the case of global sections one can talk about conjugacy at $v\in\V(K)$ as it is the quotient of $\V$ by the action of $G_K$.
\end{remark}

We now introduce the notion of a covering of a global quasi-section which plays a crucial role in the theory. 

\begin{definition}
Let $D$ and $D_1,\ldots, D_n$ be global quasi-sections for some $n\ge 1$ and let $\Omega$ be a nonempty subset of $\V$. We say that quasi-sections $(D_i)_i$ \textit{cover} quasi-section $D$ on $\Omega$ if for every $v \in \Omega$ there exists $1\le i \le n$ such that $D$ and $D_i$ are essentially $\Delta$-conjugate at $v$.
\end{definition}

We would like to show that in certain sense the notion of a covering of a quasi-section is stable with respect to restricting to open subgroups of $\Pi$.
Let $D$ and $D_1,\ldots, D_n$ be quasi-sections of sequence~\eqref{s3:ses:global} such that $(D_i)_i$ cover $D$ on some nonempty $\Omega\subset \V$.
Let $\Pi'$ be an open subgroup of $\Pi$, thus we have an exact sequence 
\begin{equation}\label{s3:ses:global'}\tag{glob'}
1\to \Delta' \to \Pi' \to G'\to 1,
\end{equation}
with $\Delta'$ of finite index $k=[\Delta:\Delta']$ and $G'$ open in $G_K$.
As previously, we choose a system of representatives $\delta_j$ for $1 \le j\le k$ of right cosets of $\Delta'$ in $\Delta$. We define
\[
D_{ij}=\delta_j D_i\delta_j^{-1},\quad \textrm{for}\quad  1\le i\le n,\; 1\le j\le k,
\]
thus each $D_{ij}$ is a quasi-section of sequence~\eqref{s3:ses:global}. Write $D'$ and $D'_{ij}$ for the restrictions of $D$ and $D_{ij}$ to $\Pi'$, we regard $D'$ and $D'_{ij}$ as quasi-sections of sequence~\eqref{s3:ses:global'}. According to the previous discussion, for every $1\le i\le n$ we have a disjoint sum decomposition
\[
\Pi'\cap (\Delta, D_i)= \bigsqcup_{j=1}^{k}(\Delta', D'_{ij}).
\]

\begin{lemma}\label{s3:lem:stability_of_coverings}
With the above notation, quasi-sections $(D'_{ij})_{ij}$ cover quasi-section $D'$ on $\Omega$.
\end{lemma}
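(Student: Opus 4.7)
The plan is to unpack the essential $\Delta$-conjugacy at $v$ supplied by the covering hypothesis, split the conjugator through the coset decomposition of $\Delta$ modulo $\Delta'$, and then restrict everything to $\Pi'$. Fix $v\in\Omega$. By hypothesis there are $1\le i\le n$ and $\delta\in\Delta$ such that $\delta D_{i,v}\delta^{-1}$ coincides with $D_v$ on an open subgroup $U\subset \Pi_v$. I would then write $\delta=\delta'\delta_j$ for the unique $\delta'\in\Delta'$ and $1\le j\le k$; this is the natural place to introduce the representatives $\delta_j$, since it is precisely what converts conjugation by a general element of $\Delta$ into conjugation by an element of $\Delta'$ applied to one of the twisted quasi-sections $D_{ij}=\delta_j D_i\delta_j^{-1}$.

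The second step is a bookkeeping computation: writing $\Pi'_v=\Pi'\cap\Pi_v$, $D_{ij,v}=D_{ij}\cap\Pi_v$ and $D'_{ij,v}=D'_{ij}\cap\Pi'_v$, one checks that the three operations ``restrict to $\Pi'$'', ``localize at $v$'' (intersect with $\Pi_v$) and ``conjugate by $\delta$ or $\delta'$'' commute pairwise. This reduces to the observations that $\delta\in\Delta$ normalizes $\Pi_v$ and that $\delta'\in\Delta'\subset\Pi'$ normalizes both $\Pi_v$ and $\Pi'$, hence also $\Pi'_v$. From these commutations one extracts the identities
\[
\delta D_{i,v}\delta^{-1}=\delta' D_{ij,v}(\delta')^{-1},\qquad \delta' D_{ij,v}(\delta')^{-1}\cap\Pi'=\delta' D'_{ij,v}(\delta')^{-1},
\]
together with $D_v\cap\Pi'=D'_v$.

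The last step is to transfer the ``coincide on an open subgroup'' property down to $\Pi'$. Since $\Pi'_v$ is open in $\Pi_v$, the set $U\cap\Pi'_v$ is open in $U$ and therefore open in each of $D_v$ and $\delta D_{i,v}\delta^{-1}$. Intersecting each of these with $\Pi'_v$ now yields $D'_v$ and $\delta' D'_{ij,v}(\delta')^{-1}$ respectively, and $U\cap\Pi'_v$ sits as an open subgroup of both. This is exactly the statement that $D'$ and $D'_{ij}$ are essentially $\Delta'$-conjugate at $v$, which is what is needed.

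I do not expect any genuine obstacle in this argument: it is essentially a diagram chase among the three operations listed above. The only point that requires care is the role of the coset representatives $\delta_j$, which must be built into the covering family of $D'$ because essential $\Delta$-conjugacy of $D$ and some $D_i$ need not, on its own, descend to essential $\Delta'$-conjugacy of $D'$ and $D'_i$ alone; one must first absorb the appropriate $\delta_j$ into the twisted quasi-section to land inside $\Delta'$.
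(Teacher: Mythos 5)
Your proof is correct and follows the same route as the paper: the paper's proof is a two-sentence appeal to the splitting $\Pi'\cap(\Delta,D_i)=\bigsqcup_j(\Delta',D'_{ij})$, while you spell out the intermediate steps — decomposing the conjugator as $\delta=\delta'\delta_j$, checking that conjugation, localization at $v$, and restriction to $\Pi'$ commute, and passing the open-coincidence from $\Pi_v$ down to $\Pi'_v$. This is exactly the content the paper leaves implicit when it says "from the above decomposition we see..."
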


\begin{proof}
Let $v$ be a valuation in $\Omega$, by assumption there exists $1\le i\le n$ and such that $D_i$ and $D$ are essentially $\Delta$-conjugate at $v$. From the above decomposition we see that there exists $1\le j\le k$ such that $D'_{ij}$ and $D'$ are essentially $\Delta'$-conjugate at $v$. Thus, $D'$ is covered on $\Omega$ by quasi-sections $(D'_{ij})_{ij}$.
\end{proof}

We end this section by introducing the following definition.

\begin{definition}
We say that sequence~\eqref{s3:ses:global} has \textit{finite covering property} if for every $n\ge 1$ and every $\Omega\subset \V(K)$ of density one the following condition is satisfied: whenever $s$ and $s_1,\ldots s_n$ are sections of sequence~\eqref{s3:ses:global} such that sections $(s_i)_i$ cover $s$ on $\Omega$ then there exists $1\le i\le n$ such that $s$ and $s_i$ are $\Delta$-conjugate.
\end{definition}

Note that if we assume that $\Delta$ is abelian (so $\Delta$ has a natural $G_K$-action) then the finite covering property of sequence~\eqref{s3:ses:global} is equivalent to the finite covering property of $G_K$-module $\Delta$ introduced in Section~\ref{s:covering_property}.

\section{Covers of locally geometric sections}\label{s:global_conjugacy_special_case}
In this section we will apply results proved in Sections~\ref{s:covering_property} and~\ref{s:quasi-sections} to the homotopy exact sequence of \'etale fundamental groups of semi-abelian varieties and smooth curves over a number field.
More specifically, we will prove that this sequence has a certain weaker version of the finite covering property, where all sections are assumed to be locally geometric on a set of positive density.
Our main result, proved in Section~\ref{s:global_conjugacy}, does not logically depend on this weaker version, however the argument we present here is slightly shorter.

We keep the notation from Section~\ref{s:covering_property}. Let $X$ be a geometrically connected variety over a number field $K$. For a valuation $v\in \V$ we denote by $X_v=X\times K_v$ the base change of $X$ to the local field $K_v$, thus $X_v$ is a $K_v$-variety. We write $\Pi$ for the \'etale fundamental group $\pi^{et}_1(X)$ of $X$ and $\Delta$ for the geometric fundamental group $\pi^{et}_1(X_{\overline{K}})$, thus we have the \'etale homotopy short exact sequence
\begin{equation}\label{s4:ses:etale_global}\tag{\'et}
1 \to \Delta \to \Pi \to G_K \to 1.
\end{equation}
Applying the quotient  $\Delta\twoheadrightarrow \Delta^{ab}$ we obtain a short exact sequence
\begin{equation}\label{s4:ses:etale_abelianization}\tag{\'et-ab}
1\to \Delta^{ab}\to \Pi^{(ab)}\to G\to 1,
\end{equation}
where $\Pi^{(ab)}$ is the quotient of $\Pi$ by the normal subgroup $[\Delta,\Delta]$.
For a section $s$ of sequence~\eqref{s4:ses:etale_global} the composition of $s:G_K\to \Pi$ with the surjection $\Pi\twoheadrightarrow \Pi^{(ab)}$ is a section of sequence~\eqref{s4:ses:etale_abelianization} which we denote by $s^{ab}$ and call the abelianization of section $s$. 

For every valuation $v\in\V$ we have the decomposition subgroup $G_v\subset G_K$ and the restriction of sequence~\eqref{s4:ses:etale_global} to $G_v$ is denoted by
\begin{equation}\label{s4:ses:etale_local}
1 \to \Delta \to \Pi_v \to G_v \to 1. \tag{\'et-loc}
\end{equation}
Here we may identify $\Pi_v$ with the \'etale fundamental group $\pi^{et}_1(X_v)$ of $X_v$. Similarly, by passing to the quotient $\Delta\twoheadrightarrow \Delta^{ab}$ we obtain a short exact sequence
\begin{equation}\label{s4:ses:etale_local_abelianization}\tag{\'et-loc-ab}
1\to \Delta^{ab}\to \Pi^{(ab)}_v \to G_v\to 1,
\end{equation}
which is equal to the restriction of sequence~\eqref{s4:ses:etale_abelianization} to $G_v$.

\begin{remark}
It is known that when $X$ is either a semi-abelian variety or a smooth geometrically connected curve then the main assumption of Section~\ref{s:quasi-sections} is satisfied, namely that all quasi-sections of both sequences~\eqref{s4:ses:etale_global} and \eqref{s4:ses:etale_local} have trivial geometric centralizers.
Indeed, it is proved in \cite[Exp.~XII, Cor.~5.2]{sga1} that $\Delta$ is topologically finitely generated. Moreover, for an abelian variety $A$ over a $p$-adic local field $F$ the group of $F$-rational torsion points is finite, as the compact $p$-adic Lie group $A(F)$ has an open subgroup isomorphic to $\mathbb{Z}^{\oplus r}_p$ for some $r\ge 1$ (see \cite[Ch.~5, \S7]{serre1992lie}) which is torsion-free.
This implies that $H^0(G_F, T(A))=0$ and we conclude by applying Lemma~\ref{s3:lem:centralizer_criterion}, using the short exact sequence from the proof of Theorem~\ref{s2:thm:abelianization_covering_property}.
\end{remark}

The theory developed in Section~\ref{s:covering_property} implies the following result.

\begin{corollary}\label{s4:cor:finite_covering_property}
When $X$ is a semi-abelian variety then sequence~\eqref{s4:ses:etale_global} has finite covering property.
When $X$ is a hyperbolic curve then the abelianized sequence~\eqref{s4:ses:etale_abelianization} has finite covering property.
\end{corollary}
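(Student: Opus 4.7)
The plan is to deduce both assertions directly from Theorem~\ref{s2:thm:abelianization_covering_property} by invoking the equivalence stated in the final remark of Section~\ref{s:quasi-sections}, which says that for a short exact sequence with abelian kernel the finite covering property for the sequence (in the sense of Section~\ref{s:quasi-sections}) coincides with the finite covering property for the kernel viewed as a $G_K$-module (in the sense of Section~\ref{s:covering_property}). Once this translation is in place, each of the two cases is almost immediate.

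For the semi-abelian variety case, I would first observe that when $X$ is semi-abelian the geometric \'etale fundamental group $\Delta=\pi_1^{et}(X_{\overline K})$ is already topologically abelian (it is canonically isomorphic to the Tate module $\varprojlim_n X(\overline K)[n]$), so $\Delta^{ab}=\Delta$ and sequence~\eqref{s4:ses:etale_global} coincides with its own abelianization. Theorem~\ref{s2:thm:abelianization_covering_property} says that $\Delta$ has finite covering property as a $G_K$-module, and the equivalence then transports this into finite covering property for the sequence. For the hyperbolic curve case I would apply Theorem~\ref{s2:thm:abelianization_covering_property} in the same way, but now to the abelianization: by construction the kernel of~\eqref{s4:ses:etale_abelianization} is $\Delta^{ab}$, the theorem gives finite covering property for $\Delta^{ab}$, and the equivalence yields the finite covering property for~\eqref{s4:ses:etale_abelianization}.

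The only step that is not entirely formal is to make sure that the equivalence mentioned in the remark really does hold, and this is where I expect the only (minor) obstacle. The verification has two ingredients. First, one uses the standard bijection between $\Delta^{ab}$-conjugacy classes of sections of a sequence with abelian kernel $\Delta^{ab}$ and cohomology classes in $H^1(G_K,\Delta^{ab})$, together with the analogous statement at every $v\in\V(K)$, to see that sections are conjugate at $v$ if and only if the corresponding classes have equal images under $loc_v$. Second, one has to check that for genuine sections (not merely quasi-sections) the relation of being \emph{essentially} $\Delta^{ab}$-conjugate at $v$ reduces to ordinary $\Delta^{ab}$-conjugacy at $v$: this is exactly the content of Lemma~\ref{s3:lem:equality_on_open_subgroup} applied to the restriction of~\eqref{s4:ses:etale_local_abelianization} to an open subgroup of $G_v$, which forces two sections of $G_v\to\Pi^{(ab)}_v$ that agree on an open subgroup to agree outright. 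With these two observations the two notions of "cover on $\Omega$" match on the nose, and the corollary follows.
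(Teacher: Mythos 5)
Your argument is correct and follows the same route as the paper's one-line proof: both deduce the corollary from Theorem~\ref{s2:thm:abelianization_covering_property} together with the remark at the end of Section~\ref{s:quasi-sections} identifying the finite covering property of a short exact sequence with abelian kernel with that of the kernel viewed as a $G_K$-module, noting that $\Delta$ is already abelian in the semi-abelian case and that the kernel of~\eqref{s4:ses:etale_abelianization} is $\Delta^{ab}$ in the hyperbolic case. Your extra verification that essential $\Delta^{ab}$-conjugacy at $v$ collapses to ordinary $\Delta^{ab}$-conjugacy at $v$ via Lemma~\ref{s3:lem:equality_on_open_subgroup} merely spells out what the paper's remark takes for granted.
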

\begin{proof}
Since the geometric fundamental group of a semi-abelian variety is an abelian group, the statement follows from Theorem~\ref{s2:thm:abelianization_covering_property}.
\end{proof}

From now on we assume that $X$ is a hyperbolic curve. Write $S(\Pi_v,G_v)$ and $S(\Pi^{(ab)}_v,G_v)$ for the sets of conjugacy classes of sections of exact sequences~\eqref{s4:ses:etale_local} and \eqref{s4:ses:etale_local_abelianization}, respectively. We have a sequence of maps
\begin{equation}\label{s4:eq:kummer_map}
X(K_v)\to S(\Pi_v,G_v)\to S(\Pi^{(ab)}_v, G_v),
\end{equation}
whose composition is injective; here the first arrow is the Kummer map coming from functoriality of \'etale fundamental groups and the second one maps a section to its abelianization (see \cite{stix2012rational}, Ch. 7). When $s$ is a section of sequence~\eqref{s4:ses:etale_local} we say that $s$ is \textit{geometric} if its class in $S(\Pi_v,G_v)$ comes from a (unique) $K_v$-rational point of $X$.

Let $s$ be a global section of sequence \eqref{s4:ses:etale_global} and $v\in\V$. We say that $s$ is \textit{geometric at} $v$ if the localized section $s_v$ is geometric. Note that if $s$ is geometric at $v$ then it is also geometric at $gv$ for every $g\in G_K$ thus we may also talk about a section being geometric at $v\in \V(K)$. When $\Omega$ is a subset of $\V$, we say that $s$ is \textit{geometric on} $\Omega$ if for every $v$ in $\Omega$ section $s$ is geometric at $v$.

\begin{lemma}\label{s4:lem:covering_of_geometric_sections}
Let $s$ and $s_1,\ldots s_n$ for $n \ge 1$ be sections of sequence~\eqref{s4:ses:etale_global} such that sections $(s_i)_i$ cover section $s$ on a set $\Omega\subset \V(K)$ of density one.
Assume that there exists a nonempty set $T\subset \Omega$ such that sections $s$ and $s_i$ for all $1 \le i \le n$ are geometric on $T$. Then there exists $1\le i \le n$ such that $s$ and $s_i$ are conjugate at $v$ for every $v$ in $T$.
\end{lemma}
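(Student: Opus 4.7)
The plan is to reduce to the abelianized situation where we can invoke Corollary~\ref{s4:cor:finite_covering_property}, and then exploit the injectivity of the composition~\eqref{s4:eq:kummer_map} to lift the conclusion back to the full (nonabelian) sequence at valuations where the sections are geometric.

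First, I would pass from the sections $s,s_1,\ldots,s_n$ of sequence~\eqref{s4:ses:etale_global} to their abelianizations $s^{ab},s_1^{ab},\ldots,s_n^{ab}$, which are sections of sequence~\eqref{s4:ses:etale_abelianization}. If $s$ and $s_i$ are $\Delta$-conjugate at $v$ by some $\delta\in\Delta$, then applying the quotient $\Pi\twoheadrightarrow \Pi^{(ab)}$ shows that $s^{ab}$ and $s_i^{ab}$ are $\Delta^{ab}$-conjugate at $v$ (by the image of $\delta$). Hence the covering of $s$ by $(s_i)_i$ on $\Omega$ descends to a covering of $s^{ab}$ by $(s_i^{ab})_i$ on $\Omega$.

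Next, since $X$ is a hyperbolic curve, Corollary~\ref{s4:cor:finite_covering_property} tells us that the abelianized sequence~\eqref{s4:ses:etale_abelianization} has finite covering property. Therefore there exists some index $1\le i\le n$ such that $s^{ab}$ and $s_i^{ab}$ are globally $\Delta^{ab}$-conjugate. In particular, for every $v\in T$, the local sections $s^{ab}_v$ and $(s_i^{ab})_v$ of sequence~\eqref{s4:ses:etale_local_abelianization} are $\Delta^{ab}$-conjugate.

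It remains to upgrade this local conjugacy at the abelianized level to conjugacy of the original sections $s$ and $s_i$ at each $v\in T$. This is where the geometricity hypothesis enters: for $v\in T$, both $s_v$ and $(s_i)_v$ are geometric, hence come from (unique) $K_v$-rational points $P_v$ and $P_{i,v}$ of $X$ via the Kummer map $X(K_v)\to S(\Pi_v,G_v)$. The composition~\eqref{s4:eq:kummer_map} is injective, and it sends $P_v$ and $P_{i,v}$ to the common abelianized class $[s_v^{ab}]=[(s_i)_v^{ab}]$. Thus $P_v=P_{i,v}$, which forces $[s_v]=[(s_i)_v]$ in $S(\Pi_v,G_v)$, meaning $s$ and $s_i$ are $\Delta$-conjugate at $v$. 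As this holds simultaneously for the single chosen index $i$ and every $v\in T$, the lemma follows.

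No step here poses a real obstacle once the machinery of Section~\ref{s:covering_property} is in place; the only point that deserves attention is that a single index $i$ (the one furnished by the finite covering property applied to the abelianization) works uniformly for all $v\in T$, rather than requiring a separate choice of index per valuation.
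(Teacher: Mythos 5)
Your proof is correct and follows essentially the same route as the paper: pass to abelianizations, invoke Corollary~\ref{s4:cor:finite_covering_property} to get a single global conjugacy at the abelianized level, then use the geometricity hypothesis together with the injectivity of the composition~\eqref{s4:eq:kummer_map} to pull back to local conjugacy of the original sections at each $v\in T$. The only difference is that you spell out the intermediate step through $K_v$-rational points $P_v=P_{i,v}$, which the paper leaves implicit.
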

\begin{proof}
From Corollary~\ref{s4:cor:finite_covering_property} we conclude that there exists $1\le i\le n$ such that the abelianized sections $s^{ab}_i$ and $s^{ab}$ are conjugate. In particular, $(s_i^{ab})_v$ and  $s^{ab}_v$ are conjugate sections of sequence~\eqref{s4:ses:etale_local_abelianization} for every $v\in T$. Since they are assumed to be geometric, it follows from the injectivity of mapping~\eqref{s4:eq:kummer_map} that $(s_i)_v$ and $s_v$ are conjugate for every $v\in T$.
\end{proof}

We will also need a mild extension of Lemma~\ref{s3:lem:conjugacy_criterion}.

\begin{lemma}\label{s4:lem:conjugacy_of_sections}
Let $s$ and $s_1,\ldots, s_n$ be sections of sequence~\eqref{s4:ses:etale_global} for some $n\ge 1$ and let $M/K$ be a finite field extension. Suppose that for every neighbourhood $\Pi'$ of $s$ there exists some $1\le i \le n$ and a section $t_i$ of sequence~\eqref{s4:ses:etale_global} which is $\Delta$-conjugate to $s_i$ such that $t_i(G_M)$ is contained in $\Pi'$. Then, there exists $1\le i \le n$ such that $s$ and $s_i$ are conjugate. 
\end{lemma}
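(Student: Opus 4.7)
The plan is to reduce the statement to the single-section criterion Lemma~\ref{s3:lem:conjugacy_criterion} by a pigeonhole argument on the finitely many indices $1 \le i \le n$, applied after restricting sequence~\eqref{s4:ses:etale_global} to the open subgroup $G_M \subset G_K$.

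First I would fix a descending sequence $(\Delta^k)_{k \ge 1}$ of open characteristic subgroups of $\Delta$ with trivial intersection, which exists since $\Delta$ is topologically finitely generated, and set $\Pi^k = \Delta^k s(G_K)$. The $\Pi^k$ are characteristic neighbourhoods of $s$ that are cofinal among all neighbourhoods of $s$, so the hypothesis applies with $\Pi' = \Pi^k$ for every $k$. For each $k$ let
\[
J_k = \{\, i \in \{1, \dots, n\} : \text{some } \Delta\text{-conjugate } t \text{ of } s_i \text{ satisfies } t(G_M) \subset \Pi^k \,\}.
\]
By hypothesis each $J_k$ is nonempty, and the inclusion $\Pi^{k+1} \subset \Pi^k$ yields $J_{k+1} \subset J_k$. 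Since $\{1, \dots, n\}$ is finite, the intersection $J_\infty = \bigcap_k J_k$ is nonempty; fix $i^* \in J_\infty$.

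Next I would pass to the restriction
\[
1 \to \Delta \to \Pi_M \to G_M \to 1
\]
of sequence~\eqref{s4:ses:etale_global} to $G_M$. A direct calculation gives $\Pi^k \cap \Pi_M = \Delta^k s(G_M)$, and these intersections form a cofinal family of characteristic neighbourhoods of $s|_{G_M}$ in the restricted sequence. The condition $t(G_M) \subset \Pi^k$ for a $\Delta$-conjugate $t$ of $s_{i^*}$ is exactly the condition that $t|_{G_M}$, which is the corresponding $\Delta$-conjugate of $s_{i^*}|_{G_M}$, has image inside $\Pi^k \cap \Pi_M$. Thus the hypothesis of Lemma~\ref{s3:lem:conjugacy_criterion} is satisfied for the restricted sequence with the pair $s|_{G_M}$ and $s_{i^*}|_{G_M}$, and that lemma supplies a $\Delta$-conjugacy between these two restricted sections.

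Finally, Corollary~\ref{s3:cor:conjugate_on_open_subgroup} upgrades this conjugacy on the open subgroup $G_M$ to a $\Delta$-conjugacy of the full global sections $s$ and $s_{i^*}$, which is what is required. The only essentially new step is the finite pigeonhole in the first paragraph; its role is precisely to extract a single index $i^*$ that simultaneously witnesses the approximation for every neighbourhood of $s$, which is the key point where the finiteness of the collection $\{s_1, \dots, s_n\}$ enters. The remainder is a routine reduction to results already established in Section~\ref{s:quasi-sections}.
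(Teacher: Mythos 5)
Your proposal is correct and follows the same route as the paper: restrict to $G_M$, apply Lemma~\ref{s3:lem:conjugacy_criterion}, and then upgrade to a global conjugacy via Corollary~\ref{s3:cor:conjugate_on_open_subgroup}. The paper leaves the pigeonhole step implicit (Lemma~\ref{s3:lem:conjugacy_criterion} is stated for a single pair of sections, so some argument is needed to extract one fixed index $i^*$ working for all neighbourhoods); your nested-$J_k$ argument supplies exactly that missing detail and is a clean way to make the reduction rigorous.
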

\begin{proof}
We first restrict sequence~\eqref{s4:ses:etale_global} to the open subgroup $G_M\subset G_K$. Then, using Lemma~\ref{s3:lem:conjugacy_criterion} we find an integer $1\le i\le n$ such that $s|_{G_M}$ and $s_i|_{G_M}$ are $\Delta$-conjugate.
From Corollary~\ref{s3:cor:conjugate_on_open_subgroup} we deduce that $s$ and $s_i$ are $\Delta$-conjugate.
\end{proof}

We are now ready to prove the following theorem, which gives a weak version of the finite covering property of the \'etale homotopy sequence~\eqref{s4:ses:etale_global} when all sections are assumed to be geometric on a fixed set of valuations of positive density. 

\begin{theorem}\label{s4:thm:global_conjugacy}
Let $s$ and $s_1,\ldots s_n$ for $n \ge 1$ be sections of sequence~\eqref{s4:ses:etale_global} such that sections $(s_i)_i$ cover section $s$ on a set $\Omega_K\subset \V(K)$ of density one. Assume furthermore that there exists a set $T_K\subset \V(K)$ of positive upper density such that sections $s$ and $s_i$ for all $1 \le i \le n$ are geometric on $T_K$. Then there exists $1\le i\le n$ such that $s$ and $s_i$ are conjugate.
\end{theorem}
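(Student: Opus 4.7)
Set $T := T_K \cap \Omega_K$; since $\Omega_K$ has density one, $T$ has the same positive upper density as $T_K$. Lifting $T$ to a subset of $\V$, the hypotheses of Lemma~\ref{s4:lem:covering_of_geometric_sections} are satisfied, and we obtain an index $i_0 \in \{1,\ldots,n\}$ (which, by relabeling, we may take to be $1$) such that $s$ and $s_1$ are $\Delta$-conjugate at every $v \in T$. The remaining task is to promote this positive-density local conjugacy to global $\Delta$-conjugacy of $s$ and $s_1$.

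The plan is to use Lemma~\ref{s3:lem:conjugacy_criterion}: fix a descending cofinal sequence of characteristic open subgroups $(\Delta^j)_{j\ge 1}$ of $\Delta$ with $\bigcap_j \Delta^j = 1$, and show that for each $j$ some $\Delta$-conjugate of $s_1$ has image inside $U^j := \Delta^j s(G_K)$. For each $v \in T$, the triviality of geometric centralizers yields a unique $\delta_v \in \Delta$ with $\delta_v s_1(g)\delta_v^{-1} = s(g)$ for all $g \in G_v$. A pigeonhole argument, applied inductively to the images $\bar\delta_v \in \Delta/\Delta^j$, produces a coherent sequence $(\bar\delta^{(j)})_{j\ge 1}$ (so that $\bar\delta^{(j+1)}$ projects to $\bar\delta^{(j)}$) together with a descending chain $T \supset T^{(1)} \supset T^{(2)} \supset \cdots$ of positive-upper-density subsets such that $\bar\delta_v = \bar\delta^{(j)}$ for every $v \in T^{(j)}$; compactness of $\Delta = \varprojlim_j \Delta/\Delta^j$ then supplies an element $\delta^* \in \Delta$ whose reductions match the $\bar\delta^{(j)}$. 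Setting $t := \delta^* s_1 (\delta^*)^{-1}$, and using that $\Delta^j$ is normal in $\Pi$, one checks that $t(g) \equiv s(g) \pmod{\Delta^j}$, hence $t(g) \in U^j$, for all $g \in G_v$ with $v \in T^{(j)}$.

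The principal obstacle, and the most delicate step of the argument, is to pass from this "agreement modulo $\Delta^j$ on the Galois group of an open subfield whose degree over $K$ may grow with $j$" to the global statement $t(G_K) \subset U^j$ for every $j$ (equivalently, $t = s$). To overcome this, I intend to exploit the cover $Y^{(j)}\to X$ corresponding to $U^j$: by Lemma~\ref{s3:lem:stability_of_coverings} the $\Delta^j$-conjugacy classes obtained by intersecting the $\Delta$-conjugates of the $s_i$ with $U^j$ cover $s$ on $\Omega_K$ as quasi-sections of the $Y^{(j)}$-sequence, and on a suitable pull-back of $T$ to a finite extension $L/K$ over which the relevant quasi-sections become honest sections, every section in question remains geometric. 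Corollary~\ref{s4:cor:finite_covering_property}, applied to the abelianized sequence of $Y^{(j)}_L$, then yields the desired identification modulo $\Delta^j$, and Corollary~\ref{s3:cor:coincide_on_open_subgroup} promotes it to the conclusion. The main technical difficulty will be the bookkeeping of the field extensions arising from Case II of the quasi-section splitting in Section~\ref{s:quasi-sections}, and ensuring that the pull-back of $T$ retains positive upper density in $\V(L)$ so that Lemma~\ref{s4:lem:covering_of_geometric_sections} can be reapplied on the cover; once this is done, feeding the resulting data back into Lemma~\ref{s3:lem:conjugacy_criterion} produces the required global conjugacy of $s$ and $s_1$.
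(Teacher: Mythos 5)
Your overall plan runs parallel to the paper's, but it leaves unresolved precisely the step that makes the paper's proof work, and you yourself flag it as ``the principal obstacle.'' Concretely: for each neighbourhood $\Pi'=U^j$ of $s$, after Lemma~\ref{s3:lem:stability_of_coverings} and Lemma~\ref{s4:lem:covering_of_geometric_sections} you only obtain a quasi-section $E'\subset\Pi'$ (a conjugate of some $D'_{ij}$) that agrees with $D'=s(G_K)$ locally at every $v\in T$ after restricting to $G_L$, where $L$ depends on $\Pi'$. To apply Corollary~\ref{s3:cor:coincide_on_open_subgroup} globally you would need $pr(D')\subset pr(E')$, i.e.\ $pr(E')=G_K$, but this is not available. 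Your fallback — the pigeonhole construction of $\delta^*$ in the second paragraph — also does not close the gap: the set $T^{(j)}$ has upper density at best $\delta(T)/|\Delta/\Delta^j|$, and the induced open subgroup of $G_K$ on which $t\equiv s\pmod{\Delta^j}$ therefore has index bounded only by $|\Delta/\Delta^j|/\delta(T)$, which is unbounded in $j$; so no uniformity is obtained, and Lemma~\ref{s3:lem:conjugacy_criterion} cannot be invoked.

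The missing ingredients, which the paper introduces expressly for this purpose, are: (i) Lemma~\ref{s4:lem:conjugacy_of_sections}, a strengthening of Lemma~\ref{s3:lem:conjugacy_criterion} that only requires $t_i(G_M)\subset\Pi'$ for a \emph{fixed} finite extension $M/K$ independent of $\Pi'$; (ii) the observation that local agreement of $E'$ and $D'$ at $v\in T$ forces $pr(E'_v)=G_v$ (via Corollary~\ref{s3:cor:coincide_on_open_subgroup} applied locally), so that the finite extension $K'$ with $pr(E')=G_{K'}$ splits completely at \emph{every} $v\in T_K$; and (iii) Lemma~\ref{s4:lem:maximal_field}, which bounds the compositum of all such $K'$ by a single number field $M$ depending only on $T_K$. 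Taken together, (ii) and (iii) force $K'\subset M$ uniformly over all neighbourhoods $\Pi'$, and (i) then closes the argument. Incidentally, the density of the pull-back of $T$ to $\V(L)$ — which you list as the ``main technical difficulty'' — is not in fact a problem, since positive upper density is preserved under pulling back along a finite extension; the genuine difficulty is the boundedness of $pr(E')$, and that is what your proposal is missing.
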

\begin{proof}
Replacing $T_K$ with $\Omega_K\cap T_K$ we may assume that $T_K\subset \Omega_K$. Write $\Omega$ and $T$ for the preimages of $\Omega_K$ and $T_K$ through the surjection $\V\twoheadrightarrow \V(K)$, thus $T\subset \Omega \subset \V$. Let $\Pi'$ be a neighbourhood of $s$, thus we have a short exact sequence
\begin{equation}\label{s4:eq:ses_prime}
1 \to \Delta' \to \Pi' \to G_K \to 1,
\end{equation}
for some open subgroup $\Delta'\subset\Delta$. Thanks to Lemma \ref{s4:lem:conjugacy_of_sections}, it will be enough to find a section $t_i$ of sequence~\eqref{s4:ses:etale_global} which is $\Delta$-conjugate to $s_i$ for some $1\le i \le n$ such that $t_i(G_M)$ is contained in $\Pi'$, where $M$ is a finite field extension of $K$ which is independent of the neighbourhood $\Pi'$.

Denote $D=s(G_K)$ and $D_i = s_i(G_K)$ for $1 \le i \le n$, we also write $D'=D$ when $s$ is considered as a quasi-section of sequence~\eqref{s4:eq:ses_prime}. Recall from Section~\ref{s:quasi-sections} that for every $1 \le i \le n$ the $\Delta$-torsor $(\Delta, D_i)$ splits in $\Pi'$ as follows
\[
\Pi' \cap (\Delta, D_i) = \bigsqcup_{j=1}^{k}(\Delta', D'_{ij}).
\]
Here we have $D_{ij}=\delta_j D_i \delta_j^{-1}$ for a system of representatives $\delta_j$ of right cosets of $\Delta'$ in $\Delta$ and $D'_{ij}= \Pi' \cap D_{ij}$ is a restriction of $D_{ij}$ to $\Pi'$. 

Choose some finite field extension $L/K$ such that after restricting to $G_L$ all quasi-sections $D'_{ij}$ become sections; explicitly we may take $G_L$ to be the intersection of the projections of $D'_{ij}$ over all $i,j$, which is an open subgroup of $G_K$. Write $\Omega_L$ for the image of $\Omega$ under the surjection $\V\twoheadrightarrow \V(L)$, it is a subset of valuations of $L$ of density one.  
Write $D'_L=D'|_{G_L}$ and $D'_{ij,L}= D'_{ij}|_{G_L}$, hence $D'_L$ and $D'_{ij,L}$ are sections of the short exact sequence 
\[
1 \to \Delta' \to \Pi'_L \to G_L\to 1,
\]
where $\Pi'_L$ is the preimage of $G_L$ along the surjection $\Pi'\twoheadrightarrow G_K$.
From Lemma~\ref{s3:lem:stability_of_coverings} we see that sections $(D'_{ij,L})_{ij}$ cover section $D'_L$ over $\Omega_L$. Moreover, sections $D'_L$ and $D'_{ij,L}$ are geometric on $T$ and $\Omega_L$ is a density one subset of $\V(L)$.
Thus we may apply Lemma~\ref{s4:lem:covering_of_geometric_sections}, hence there exist some $i,j$ such that for all $v\in T$ sections $D'_L$ and $D'_{ij,L}$ are $\Delta'$-conjugate at $v$. Writing $E=D_{ij}$ we conclude that for all $v\in T$ two quasi-sections $D'_v$ and $E'_v$ of the short exact sequence
\begin{equation}\label{eq:ses_prime_local}
1\to \Delta' \to \Pi'_v \to G_v \to 1 
\end{equation}
are essentially $\Delta'$-conjugate. 

Fix $v\in T$, by the above there exists $\delta'\in \Delta'$ such that $\delta' E'_v\delta'^{-1}$ coincides on an open subgroup with $D'_v$. Define $F_v = \delta' E_v \delta'^{-1}$, it is a section of sequence~\eqref{eq:ses_prime_local} and note that as $\delta'\in \Delta'$ we have
\[
F'_v = F_v\cap \Pi' = \delta' E_v \delta'^{-1}\cap \Pi' = \delta' E'_v\delta'^{-1}.
\]
Hence two local sections $F_v$ and $D_v$ of sequence~\eqref{s4:ses:etale_local} coincide on an open subgroup, thus by Corollary~\ref{s3:cor:coincide_on_open_subgroup} we conclude that $F_v=D_v$, therefore $F'_v=D'_v$. Thus by taking projections we obtain
\[
pr(E'_v) = pr(F'_v) = pr(D'_v) = G_v,
\]
as $D'_v$ is a section of sequence~\eqref{eq:ses_prime_local}. Therefore for every $v\in T$ we have $pr(E'_v) = G_v$.

Consider now the image of $E'$ in $G_K$, we have $pr(E')=G_{K'}$ for a finite field extension $K'/K$. Equality $pr(E'_v)=G_v$ implies that for every $v\in T$ we have $G_v\subset G_{K'}$, thus every valuation $v\in T_K$ splits completely in $K'$. By Lemma~\ref{s4:lem:maximal_field} below there exists a number field $M$ which depends only on $T_K$ such that $K'\subset M$. Hence $E=D_{ij}$ determines a section $t_i$ of sequence~\eqref{s4:ses:etale_global} which is conjugate to $s_i$ and satisfies $t_i(G_M)\subset \Pi'$. Therefore we may apply Lemma~\ref{s4:lem:conjugacy_of_sections} which finishes the proof.
\end{proof}

\begin{lemma}\label{s4:lem:maximal_field}
Let $T_K\subset\V(K)$ be a set of valuations of positive upper density. Then there exists a finite field extension $M/K$ such that every valuation $v\in T_K$ splits completely in $M$ and $M$ is the maximal field extension with property.
\end{lemma}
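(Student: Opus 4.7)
The strategy is to define $M$ as a finite Galois extension of $K$ of maximal degree in which every $v \in T_K$ splits completely, and then argue by a compositum/maximality trick that $M$ contains every other such extension. The non-Galois case is handled by passing to Galois closures. The key input is Chebotarev's density theorem, which turns the hypothesis that $T_K$ has positive upper density into a uniform degree bound on the relevant extensions.

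First I would fix an algebraic closure $\overline{K}$ and introduce the family $\mathcal{F}$ of finite Galois subextensions $L/K$ of $\overline{K}$ such that $T_K \subset \mathrm{Split}(L/K)$. Writing $d > 0$ for the upper density of $T_K$, Chebotarev gives $\delta(\mathrm{Split}(L/K)) = 1/[L:K]$ for every $L \in \mathcal{F}$, so monotonicity of upper density yields the uniform bound $[L:K] \le 1/d$. Consequently the set of degrees $\{[L:K] : L \in \mathcal{F}\}$ is bounded and I can pick some $L_0 \in \mathcal{F}$ of maximal degree.

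Next I would verify that $\mathcal{F}$ is closed under compositum: if $L_1, L_2 \in \mathcal{F}$, then $L_1 L_2/K$ is Galois and each $v \in T_K$ splits completely in $L_1 L_2$, because $(L_1 L_2) \otimes_K K_v$ is a quotient of $(L_1 \otimes_K K_v) \otimes_{K_v} (L_2 \otimes_K K_v)$ and complete splitting of $v$ in both $L_1$ and $L_2$ makes each factor a product of copies of $K_v$. Therefore for any $L \in \mathcal{F}$ the compositum $L L_0$ lies in $\mathcal{F}$ with $[L L_0 : K] \ge [L_0:K]$; maximality of $[L_0:K]$ forces $L L_0 = L_0$, i.e., $L \subset L_0$.

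Finally I would extend to not-necessarily-Galois extensions via the classical fact that a finite place $v$ of $K$ splits completely in a finite extension $L'/K$ if and only if it splits completely in the Galois closure $\widetilde{L'}/K$; this follows immediately from the observation that $L' \otimes_K K_v \cong K_v^{[L':K]}$ forces every embedding $L' \hookrightarrow \overline{K_v}$ to land in $K_v$, which in turn forces $\widetilde{L'} \hookrightarrow K_v$. Hence for any finite $L'/K$ with $T_K \subset \mathrm{Split}(L'/K)$, its Galois closure $\widetilde{L'}$ lies in $\mathcal{F}$, hence in $L_0$ by the previous paragraph, so $L' \subset L_0$. Setting $M := L_0$ concludes the proof. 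I do not foresee a serious obstacle: the argument is self-contained once one accepts Chebotarev and the standard split-completeness/Galois-closure equivalence; the only mildly subtle point is making sure $\mathcal{F}$ is closed under compositum, which is where the uniform degree bound from Chebotarev is leveraged to force termination.
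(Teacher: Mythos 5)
Your proof is correct and takes essentially the same route as the paper: Chebotarev yields a uniform degree bound from the positive upper density of $T_K$, and a compositum argument then produces the maximal extension. The paper enumerates all such extensions and forms a stabilizing tower of composita, whereas you pick a Galois extension of maximal degree and show it absorbs all the rest (handling the non-Galois case by Galois closure); these are the same idea presented slightly differently, with yours being a touch more explicit about the Galois/non-Galois distinction.
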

\begin{proof}
We need to show that there are only finitely many finite field extensions $K'/K$ with the property that every $v\in T_K$ splits completely in $K'$. Enumerate them all as $K_i$ for $i\ge 1$, then by Chebotariev's density theorem we may bound the degree $[K_i\colon K]$ by $\alpha$, where $\alpha^{-1}$ is the upper density of $T_K$. Defining inductively $L_1=K_1$ and $L_{i+1}= L_iK_{i+1}$ for $i\ge 1$ we similarly have that $[L_i\colon K]$ is not greater than $\alpha$ hence there exists a number field $M$ such that $M=L_i$ for all sufficiently large integers $i$. 
\end{proof}

\section{Pro-solvable quotient}\label{s:pro-solvable}
In this section we consider the $m$-step solvable and the maximal geometrically pro-solvable variants of the \'etale homotopy sequence of a hyperbolic curve over a number field.
We prove that these sequences have finite covering property by induction, starting from the abelian case. This result will be used in Section~\ref{s:global_conjugacy} to pass from the maximal pro-solvable quotient to the full \'etale fundamental group.

When $G$ is a topological group we define inductively $G^{[m+1]}=[G^{[m]},G^{[m]}]$ for $m\ge 0$ where $G^{[0]}=G$, thus groups $G^{[m]}$ are closed characteristic subgroups of $G$.
For $m\ge 1$, denote by $G[m]$ the quotient $G/G^{[m]}$, note that $G[1]$ is just another notation for $G^{ab}$. Furthermore, write $G[sol]$ for the maximal pro-solvable quotient of $G$ whose kernel is equal to 
\[
G^{[sol]}=\bigcap_{m\ge 1}G^{[m]}.
\]
Hence for every $m$ we have surjections $G[sol]\twoheadrightarrow G[m]$ and $G[sol]$ is the inverse limit of groups $G[m]$ over all $m\ge 1$.

As in the previous section, consider a short exact sequence of profinite groups 
\begin{equation}\label{s5:ses:basic}\tag{\'etale}
1\to \Delta\to \Pi\to G_K\to 1,
\end{equation}
where $\Pi$ and $\Delta$ are the \'etale fundamental groups of a hyperbolic curve $X$ over $K$ and of its base change to $\overline{K}$.
For every $m\ge 1$ we also have the $m$-truncated short exact sequence 
\begin{equation}\label{s5:ses:m-truncated}\tag{$m$-step}
1\to \Delta[m] \to \Pi(m)\to G_K\to 1,
\end{equation}
as well as the maximal geometrically pro-solvable short exact sequence
\begin{equation}\label{s5:ses:solvable}\tag{pro-sol}
1\to \Delta[sol]\to \Pi(sol)\to G_K\to 1,
\end{equation}
where $\Pi(m)$ and $\Pi(sol)$ are quotients of $\Pi$ making the above sequences exact. In particular, we have quotients $\Pi\twoheadrightarrow \Pi(sol) \twoheadrightarrow \Pi(m)$.
The case $m=1$ of sequence~\eqref{s5:ses:m-truncated}, namely the maximal geometrically abelian quotient, has already been introduced in Section~\ref{s:global_conjugacy_special_case}.

The main goal of this section is to prove the following theorem.
\begin{theorem}\label{s5:thm:m-truncated_section_covering}
Short exact sequence \eqref{s5:ses:m-truncated} has finite covering property, for every $m\ge 1$.
\end{theorem}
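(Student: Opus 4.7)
The plan is to induct on $m\ge 1$. For the base case $m=1$, the 1-step sequence coincides with the \'etale abelianization sequence of Section~\ref{s:global_conjugacy_special_case}, and the finite covering property follows from Corollary~\ref{s4:cor:finite_covering_property} combined with the equivalence recorded in the remark preceding Theorem~\ref{s5:thm:m-truncated_section_covering}.

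For the inductive step, assume the result for the $m$-step sequence and let $s,s_1,\ldots,s_n$ be sections of the $(m+1)$-step sequence such that $(s_i)_i$ cover $s$ on a density-one set $\Omega\subset\V(K)$. Post-composing with the projection $p\colon\Pi(m+1)\twoheadrightarrow\Pi(m)$ yields sections $\bar s=p\circ s$ and $\bar s_i=p\circ s_i$; since $p$ carries both global and local $\Delta[m+1]$-conjugacies to their $\Delta[m]$-analogues, the family $(\bar s_i)_i$ still covers $\bar s$ on $\Omega$. The inductive hypothesis produces a nonempty index set $I=\{i:\bar s_i\text{ is }\Delta[m]\text{-conjugate to }\bar s\}$, and after lifting each conjugator through the surjection $\Delta[m+1]\twoheadrightarrow\Delta[m]$ we may arrange $\bar s_i=\bar s$ for every $i\in I$. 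Setting $A=\Delta^{[m]}/\Delta^{[m+1]}$, an abelian $G_K$-module via the action induced by $\bar s$, for each $i\in I$ the sections $s,s_i$ differ by a cocycle $a_i\colon G_K\to A$; by Lemma~\ref{s3:lem:centralizer_criterion} and the triviality of geometric centralizers, $s_i$ is $\Delta[m+1]$-conjugate to $s$ if and only if $[a_i]=0$ in $H^1(G_K,A)$, with the analogous criterion holding locally.

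Writing $J=\{1,\ldots,n\}\setminus I$ and $T_j=\{v\in\V(K):\bar s_v\sim(\bar s_j)_v\}$ for $j\in J$, on $\Omega\setminus\bigcup_{j\in J}T_j$ the witnessing index of the cover of $s$ is forced to lie in $I$, so the classes $([a_i])_{i\in I}$ cover the trivial class of $H^1(G_K,A)$ on this set. The plan is then to invoke the finite covering property of the $G_K$-module $A$---verified by extending the admissibility arguments of Remark~\ref{s2:rem:examples_of_admissible_modules} via iterated use of Lemma~\ref{lem:extension_and_covering_property} and Lemma~\ref{lem:covering_property_and_restriction}, after identifying $A$, over a suitable finite base extension, with a subquotient of iterated cyclotomic twists of the Tate module of the Jacobian of a smooth compactification of $X$---to produce some $i\in I$ with $[a_i]=0$, completing the inductive step.

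The principal obstacle is ensuring that $\Omega\setminus\bigcup_{j\in J}T_j$ has density one, equivalently that $\bigcup_{j\in J}T_j$ has density zero. For each $j\in J$ the inductive hypothesis (with $n=1$) guarantees only that $T_j$ is not of density one, whereas a finite union of such sets can still have positive density. Removing this obstruction is the technical heart of the induction, and one anticipated route is to strengthen the inductive statement to a uniform density-zero conclusion for pairs of non-conjugate sections, propagated at each stage. An alternative that sidesteps the density issue directly is to work through Lemma~\ref{s4:lem:conjugacy_of_sections}: for each characteristic neighborhood $\Pi'$ of $s$, apply Lemma~\ref{s3:lem:stability_of_coverings} to restrict the cover into $\Pi'$ and, after passage to a finite base extension $M/K$ furnished by Lemma~\ref{s4:lem:maximal_field}, reduce to the inductive hypothesis applied to the restricted $(m+1)$-step sequence inside $\Pi'$.
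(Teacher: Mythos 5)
Your base case is handled correctly, but the inductive step has exactly the gap you flag, and it is not a small technicality. For $j\in J$ the sections $\bar s$ and $\bar s_j$ are not globally conjugate, yet nothing forces the set $T_j$ of valuations where they \emph{are} locally conjugate to have density zero: nonzero classes with a positive-density local vanishing set are precisely what the finite covering property in Section~\ref{s:covering_property} is designed to handle, and strengthening the inductive hypothesis to a uniform density-zero conclusion would be a far stronger injectivity statement than Theorem~\ref{s2:thm:finite_covering_property} provides. Your second route also fails as stated: restricting the $(m+1)$-step sequence to a characteristic neighbourhood $\Pi'$ does not give the $(m+1)$-step (nor the $m$-step) \'etale homotopy sequence of another hyperbolic curve, so the inductive hypothesis cannot be fed directly to the restricted sequence.

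The ingredient you are missing is the paper's auxiliary subgroup $V$ (in the paper's indexing, where the induction proves the $m$-step case from $(m-1)$): $V\subset\Delta[m]$ is the open characteristic subgroup generated by $U$ and the commutator $\Delta[m]^{[1]}$. Because $\Delta[m]/V$ is abelian, Lemma~\ref{s5:lem:m-1_quotient} shows that the $(m-1)$-step quotient of the preimage $\Delta'\subset\Delta$ of $V$ factors through $V$, and since $\Delta'$ is the geometric fundamental group of an \'etale cover, the resulting sequence $1\to\Delta'[m-1]\to\Pi'(m-1)_L\to G_L\to 1$ \emph{is} the $(m-1)$-step \'etale homotopy sequence of a hyperbolic curve over a finite extension $L/K$ --- one level lower, to which the inductive hypothesis applies. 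No covering sections are ever discarded: Lemma~\ref{s3:lem:stability_of_coverings} carries the enlarged family $(D'_{ij})_{ij}$ into a cover of $s$ inside $\Pi(V)$, the $(m-1)$-level inductive hypothesis is applied to that cover, and the global conjugacy is extracted via Lemma~\ref{s3:lem:conjugacy_criterion} by ranging over all characteristic $U$. The final bookkeeping --- using the abelianization and Proposition~\ref{s5:prop:triviality_of_centralizers} to force the selected section into $\Pi(V)$, then the containment $V^{[m-1]}\subset U$ and the $(m-1)$-level equality to force it into $\Pi(U)$ --- replaces the role you envisaged for the finite covering property of the module $A$, which would itself require further justification.
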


We quickly note two corollaries of this theorem.

\begin{corollary}\label{s5:cor:solvable_covering_property}
Short exact sequence~\eqref{s5:ses:solvable} has finite covering property.
\end{corollary}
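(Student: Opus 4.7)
The plan is to reduce the pro-solvable statement to the $m$-step case established in Theorem~\ref{s5:thm:m-truncated_section_covering}, exploiting the identifications $\Pi(sol)=\varprojlim_m \Pi(m)$ and $\Delta[sol]=\varprojlim_m \Delta[m]$. Given sections $s, s_1,\ldots, s_n$ of sequence~\eqref{s5:ses:solvable} with $(s_i)_i$ covering $s$ on a density-one set $\Omega\subset \V(K)$, I would first compose each of them with the natural surjection $\Pi(sol)\twoheadrightarrow \Pi(m)$ to obtain sections $s(m), s_1(m),\ldots, s_n(m)$ of sequence~\eqref{s5:ses:m-truncated}. The covering property descends along these projections: if $\delta\in \Delta[sol]$ conjugates $s_v$ to $(s_i)_v$, then its image in $\Delta[m]$ conjugates $s(m)_v$ to $s_i(m)_v$. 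Hence $(s_i(m))_i$ cover $s(m)$ on $\Omega$, and Theorem~\ref{s5:thm:m-truncated_section_covering} produces, for every $m\ge 1$, a nonempty set
\[
I_m = \{\,i\in\{1,\ldots,n\}: s(m) \text{ and } s_i(m) \text{ are } \Delta[m]\text{-conjugate}\,\}.
\]

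Next I would observe that $I_m$ is decreasing in $m$: any conjugator at level $m+1$ projects to a conjugator at level $m$. Since each $I_m$ is a nonempty subset of the finite set $\{1,\ldots, n\}$, the intersection $\bigcap_{m\ge 1} I_m$ is nonempty; I then fix an index $i$ in this intersection. It remains to upgrade conjugacy at every finite level to genuine $\Delta[sol]$-conjugacy. For this I would introduce the closed nonempty sets
\[
C_m = \{\,\delta\in \Delta[m]: \delta\, s_i(m)(g)\,\delta^{-1}= s(m)(g)\text{ for all } g\in G_K\,\}
\]
and note that the transition maps $\Delta[m+1]\twoheadrightarrow \Delta[m]$ send $C_{m+1}$ into $C_m$. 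The $C_m$ form an inverse system of nonempty compact Hausdorff spaces, so $\varprojlim_m C_m\subset \Delta[sol]$ is nonempty, and any element of this limit conjugates $s_i$ to $s$ in $\Pi(sol)$.

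Essentially all of the substantive work lies in Theorem~\ref{s5:thm:m-truncated_section_covering}; once it is in hand, the remaining ingredients are the pigeonhole on the finite index set $\{1,\ldots,n\}$ and the standard compactness principle for inverse limits. I do not anticipate any real obstacle in this deduction, as the pro-solvable quotient is tailored precisely to make such a limit argument work.
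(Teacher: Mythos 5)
Your proof is correct and follows the same route as the paper: project to the $m$-step quotients, apply Theorem~\ref{s5:thm:m-truncated_section_covering} at each level, use that the index sets $I_m$ are nested to find a single index $i$ that works for all $m$, and then pass to the inverse limit $\Delta[sol]=\varprojlim_m\Delta[m]$ to obtain the conjugator. Your version simply spells out the monotonicity of $I_m$ and the compactness argument in slightly more detail than the paper does.
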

\begin{proof}
For a section $t$ of sequence~\eqref{s5:ses:solvable} and $m\ge 1$ denote by $t[m]$ the section of sequence~\eqref{s5:ses:m-truncated} obtained by composing $t$ with the quotient map $\Pi(sol)\twoheadrightarrow \Pi(m)$.
Let $\Omega_K\subset \V(K)$ be a subset of density one and let $s,s_1,\ldots,s_n$ be sections of sequence~\eqref{s5:ses:solvable}, for some $n\ge 1$, such that sections $(s_i)_i$ cover section $s$ on $\Omega_K$.
Clearly, sections $s_i[m]$ cover $s[m]$ on $\Omega_K$ for all $m\ge 1$. By Theorem~\ref{s5:thm:m-truncated_section_covering}, it follows that for every $m\ge 1$ there exists $1\le i \le n$ such that $s[m]$ and $s_i[m]$ are $\Delta[m]$-conjugate.
This in turn implies that for some $1\le i\le n$ sections $s[m]$ and $s_i[m]$ are $\Delta[m]$-conjugate for every $m\ge 1$.
Then sections $s$ and $s_i$ must be $\Delta[sol]$-conjugate, as $\Delta[sol]$ is the inverse limit of all groups $\Delta[m]$.
\end{proof}

When $s$ is a section of sequence \eqref{s5:ses:basic} we denote by $s^{sol}$ the composition of $s$ with the surjection $\Pi\twoheadrightarrow \Pi[sol]$, thus $s^{sol}$ is a section of sequence \eqref{s5:ses:solvable}. 

\begin{corollary}
Suppose that $s,s_1,\ldots,s_n$ are sections of sequence~\eqref{s5:ses:basic} such that sections $(s_i)_i$ cover $s$ on a set $\Omega_K\subset \V(K)$ of density one. Then there exists $1\le i \le n$ such that $s^{sol}$ and $s_i^{sol}$ are $\Delta[sol]$-conjugate. 
\end{corollary}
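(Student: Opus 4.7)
The plan is to reduce this corollary immediately to Corollary~\ref{s5:cor:solvable_covering_property}, the finite covering property of the maximal geometrically pro-solvable sequence~\eqref{s5:ses:solvable}. The key observation is that the notion of covering of sections is compatible with passing to quotients of $\Pi$ over $G_K$. More precisely, I would first check that if $s$ and $s_i$ are $\Delta$-conjugate at some valuation $v\in\V(K)$, say $s(g)=\delta s_i(g)\delta^{-1}$ for all $g\in G_v$ and some $\delta\in\Delta$, then applying the quotient homomorphism $\Pi\twoheadrightarrow \Pi(sol)$ (which restricts to $\Delta\twoheadrightarrow \Delta[sol]$) yields the equation $s^{sol}(g)=\bar{\delta}\,s_i^{sol}(g)\,\bar{\delta}^{-1}$ for all $g\in G_v$, where $\bar{\delta}$ denotes the image of $\delta$ in $\Delta[sol]$. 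Thus $s^{sol}$ and $s_i^{sol}$ are $\Delta[sol]$-conjugate at $v$.

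Next, since by hypothesis $(s_i)_i$ cover $s$ on $\Omega_K$, for every $v\in \Omega_K$ there exists some $1\le i\le n$ with $s$ and $s_i$ $\Delta$-conjugate at $v$; by the previous step the same index $i$ witnesses that $s^{sol}$ and $s_i^{sol}$ are $\Delta[sol]$-conjugate at $v$. Therefore the sections $(s_i^{sol})_i$ cover the section $s^{sol}$ on $\Omega_K$.

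Finally, I would invoke Corollary~\ref{s5:cor:solvable_covering_property}, which states that sequence~\eqref{s5:ses:solvable} has finite covering property, applied to the sections $s^{sol}, s_1^{sol},\ldots,s_n^{sol}$ and the density-one subset $\Omega_K\subset\V(K)$. This directly yields an index $1\le i\le n$ such that $s^{sol}$ and $s_i^{sol}$ are $\Delta[sol]$-conjugate, which is exactly what the corollary asserts. There is no serious obstacle here: all the work has already been done in establishing Theorem~\ref{s5:thm:m-truncated_section_covering} and its corollary, and the statement is essentially a formal consequence of the fact that the covering relation is preserved by continuous quotients of profinite groups.
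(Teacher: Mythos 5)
Your proof is correct and matches the paper's, which simply states that the corollary follows immediately from Corollary~\ref{s5:cor:solvable_covering_property}. The observation you spell out — that local conjugacy of sections passes to the quotient $\Pi\twoheadrightarrow\Pi(sol)$, so that $(s_i^{sol})_i$ cover $s^{sol}$ on $\Omega_K$ — is exactly the "immediate" step the paper leaves implicit.
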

\begin{proof}
This follows immediately from Corollary~\ref{s5:cor:solvable_covering_property}.
\end{proof}
We now come to the proof of Theorem~\ref{s5:thm:m-truncated_section_covering}. Our first goal will be to show that sections of exact sequences~\eqref{s5:ses:m-truncated} and \eqref{s5:ses:solvable} have trivial geometric centralizers. We need a couple of lemmas.

\begin{lemma}\label{s5:lem:m-1_quotient}
Let $U$ be a normal open subgroup of $\Delta[m]$ for some $m\ge 2$ such that the quotient $\Delta[m]/U$ is abelian. Write $\Delta'$ for the preimage of $U$ along the map $\Delta\twoheadrightarrow \Delta[m]$.
Then the quotient map $\Delta'\twoheadrightarrow \Delta'[m-1]$ factors through the quotient $\Delta'\twoheadrightarrow U$.
\end{lemma}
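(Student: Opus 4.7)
The plan is to reduce the claim to a containment of closed normal subgroups of $\Delta'$, and then establish this by a short induction on the derived series.

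First I would reformulate the claim. The kernel of $\Delta'\twoheadrightarrow U$ is $\Delta^{[m]}$ (since by construction $\Delta'/\Delta^{[m]}=U$), while the kernel of $\Delta'\twoheadrightarrow \Delta'[m-1]$ is $(\Delta')^{[m-1]}$. Factoring the latter through the former is therefore equivalent to the inclusion
\[
\Delta^{[m]}\subseteq (\Delta')^{[m-1]}.
\]

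Next I would extract from the hypothesis that $\Delta[m]/U$ is abelian, combined with $\Delta/\Delta'\cong \Delta[m]/U$, the base-case inclusion $\Delta^{[1]}=\overline{[\Delta,\Delta]}\subseteq \Delta'$. With this in hand I would prove by induction on $k\ge 1$ that $\Delta^{[k]}\subseteq (\Delta')^{[k-1]}$: given the inclusion at stage $k$, applying the closed commutator operation preserves containment, so
\[
\Delta^{[k+1]}=\overline{[\Delta^{[k]},\Delta^{[k]}]}\subseteq \overline{[(\Delta')^{[k-1]},(\Delta')^{[k-1]}]}=(\Delta')^{[k]}.
\]
Taking $k=m$ gives the required inclusion.

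There is no real obstacle here; the only thing to be a little careful about is that all derived subgroups in sight are the topological closures of the ordinary commutator subgroups, and that the monotonicity $A\subseteq B\Rightarrow \overline{[A,A]}\subseteq \overline{[B,B]}$ respects closure. Once that is noted, the lemma follows formally from the derived-series induction.
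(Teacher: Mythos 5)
Your proof is correct and follows essentially the same route as the paper's: establish $\Delta^{[1]}\subseteq\Delta'$ from the abelianness of $\Delta[m]/U$, observe that the desired factorization amounts to the inclusion $\Delta^{[m]}\subseteq(\Delta')^{[m-1]}$ of the respective kernels, and obtain that inclusion by iterating the (closed) commutator. The paper compresses the induction into a single line, but the argument is identical.
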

\begin{proof}
By assumption, we have $\Delta[m]^{[1]}\subset U$ thus $\Delta^{[1]}\subset \Delta'$. Therefore \[
ker(\Delta'\twoheadrightarrow U) = \Delta'\cap \Delta^{[m]}\subset \Delta'^{[m-1]} = ker(\Delta'\twoheadrightarrow \Delta'[m-1])
\]
from which the statement follows.
\end{proof}

\begin{lemma}\label{s5:lem:commutator_lemma}
Let $G$ be a profinite group, $H\subset G$ be a closed subgroup of $G$ and $V\subset G$ be an open subgroup of $G$ such that $H^{[1]}$ is contained in $V$. Then, there exists an open subgroup $U$ of $G$ such that $H\subset U$ and $U^{[1]}\subset V$. Moreover, for every $i\ge 1$ we have 
\[
\bigcap_{H\subset U}U^{[i]} = H^{[i]},
\]
where $U$ ranges through all open subgroups of $G$ which contain $H$. Furthermore, if $G$ topologically finitely generated and $H$ is a characteristic subgroup of $G$, then in the intersection above we may restrict to open subgroups $U$ which are characteristic in $G$.
\end{lemma}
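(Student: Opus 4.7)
The plan is to handle the three assertions in turn, using the key observation that commutators behave well modulo an open normal subgroup. For the first assertion, I would choose an open normal subgroup $N$ of $G$ with $N\subset V$ (such $N$ exists because $G$ is profinite and $V$ is open) and set $U=HN$. Then $U$ is open and contains $H$. To check $U^{[1]}\subset V$, note that since $N$ is normal, for $h_1,h_2\in H$ and $n_1,n_2\in N$ the commutator $[h_1n_1,h_2n_2]$ lies in $[h_1,h_2]\cdot N\subset H^{[1]}N\subset V$; since $V$ is a (closed) subgroup of $G$, the closed subgroup $U^{[1]}$ it generates is contained in $V$.

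For the second assertion, the $\supseteq$ inclusion is immediate from functoriality of $G\mapsto G^{[i]}$. For $\subseteq$, I would prove by induction on $i$ that $(HN)^{[i]}\subset H^{[i]}N$ for every open normal $N\trianglelefteq G$. The base case $i=1$ is essentially the argument above, and the inductive step reduces to checking that $[H^{[i]}N,H^{[i]}N]\subset H^{[i+1]}N$, which again follows by reducing commutators modulo the normal subgroup $N$. Combined with the standard fact that $H^{[i]}=\bigcap_{N}H^{[i]}N$ (intersection over all open normal $N$ of $G$), where equality holds because $H^{[i]}$ is closed in the profinite group $G$, this yields
\[
\bigcap_{H\subset U}U^{[i]}\subset \bigcap_{N}(HN)^{[i]}\subset \bigcap_{N}H^{[i]}N=H^{[i]},
\]
as required.

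For the third assertion, given an open $U\supset H$ I would form $U^{\ast}=\bigcap_{\phi\in\Aut(G)}\phi(U)$. This is characteristic by construction, and it contains $H$ because $H$ is characteristic. The point where topological finite generation is used is exactly in verifying that $U^{\ast}$ is open: a topologically finitely generated profinite group has only finitely many open subgroups of any given index (a standard consequence of the fact that continuous homomorphisms from $G$ to a fixed finite group are determined by the images of the generators), so the intersection defining $U^{\ast}$ is actually finite. Since $U^{\ast}\subset U$ gives $(U^{\ast})^{[i]}\subset U^{[i]}$, restricting the intersection in the second assertion to characteristic opens gives the same value $H^{[i]}$.

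The main potential obstacle is the inductive commutator bound $(HN)^{[i]}\subset H^{[i]}N$; once this is in place, everything else is formal. The argument passes cleanly through the quotient $G/N$ because every commutator $[g_1,g_2]$ in $G$ depends only on the cosets $g_1N,g_2N$ modulo $N$, so the normality of $N$ in $G$ (not merely in $HN$) is essential in both the first and second assertions.
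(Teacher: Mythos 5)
Your proof is correct, and for the first two assertions it takes a genuinely different route from the paper. The paper argues topologically: it treats the commutator map $f(x,y)=[x,y]$ as a continuous map $G\times G\to G$, uses compactness of $H\times H$ (a tube-lemma style argument) to produce an open subset $U'\supset H$ with $f(U'\times U')\subset V$, and then shrinks $U'$ to an open subgroup; the intersection formula is then obtained by iterating this to build a chain $V_0\supset V_1\supset\cdots$ with $V_j^{[1]}\subset V_{j+1}$. You instead work algebraically modulo an open normal $N\trianglelefteq G$ contained in $V$, directly construct $U=HN$ as a subgroup, and prove the clean inductive bound $(HN)^{[i]}\subset H^{[i]}N$, from which the intersection formula falls out via the standard fact $H^{[i]}=\bigcap_N H^{[i]}N$. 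Your version is more explicit: it avoids invoking any nontrivial topological lemma about products of compact sets, and the bound $(HN)^{[i]}\subset H^{[i]}N$ isolates exactly what one needs. The paper's version is marginally shorter on the page but leaves the compactness step to the reader. For the third assertion the two arguments coincide in substance: both saturate an open subgroup containing $H$ by intersecting over all (continuous) automorphisms, using topological finite generation to see that this intersection is a finite one and hence open, and using that $H$ is characteristic to see that $H$ survives the intersection.
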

\begin{proof}
Write $f\colon G\times G \to G$ to be the map $f(x,y)=[x,y]$ for $x,y\in G$, clearly $f$ is continuous. Since $f(H\times H)\subset H^{[1]}\subset V$ there exists an open subset $U'$ of $G$ such that $H\subset U'$ and $f(U'\times U')\subset V$.
As $H$ is a closed subgroup of a profinite group, there exists an open subgroup $U$ such that $H\subset U\subset U'$. Then $f(U\times U)\subset V$ and since $V$ is a subgroup we must have $U^{[1]}\subset V$, which proves the first part of the lemma.

For the second part, fix $i\ge 1$ and let $V=V_i$ be an open subgroup of $G$ such that $H^{[i]}\subset V_i$. Applying the first part of the lemma consecutively to $H^{[i-1]},\ldots, H^{[0]}$ we obtain open subgroups $V_j$ for $0\le j\le i-1$ such that $H^{[j]}\subset V_j$ and $V_j^{[1]}\subset V_{j+1}$.
Then $U=V_0$ satisfies $H\subset U$ and $U^{[i]}\subset V$.
This implies that $\bigcap_{H\subset U}U^{[i]} \subset V$ for every open subgroup $V$ containing $H^{[i]}$. Therefore we conclude $\bigcap_{H\subset U}U^{[i]} \subset H^{[i]}$ and the inverse inclusion is obvious.

Finally, if $H$ is a closed characteristic subgroup of $G$ and $V$ is an open subgroup of $G$ containing $H$ then, as $G$ is topologically finitely generated, there exists an open characteristic subgroup $U$ of $G$ contained in $V$ such that $H\subset U$. This proves the last part of the statement.
\end{proof}

\begin{proposition}\label{s5:prop:triviality_of_centralizers}
Quasi-sections of short exact sequence~\eqref{s5:ses:m-truncated} have trivial geometric centralizers, for every $m\ge 1$. 
\end{proposition}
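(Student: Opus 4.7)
The plan is to induct on $m$. In the base case $m=1$, the kernel $\Delta[1]=\Delta^{ab}$ is already abelian, so the geometric centralizer of a quasi-section $D$ with image $pr(D)=G_L$ in $G_K$ is just $H^0(G_L,\Delta^{ab})$. This vanishes because, by the proof of Theorem~\ref{s2:thm:abelianization_covering_property}, after enlarging $L$ the module $\Delta^{ab}$ is an extension of the Tate module $T(A)$ by $\Zhat(1)^{\oplus r}$, and neither summand has nonzero invariants under any open subgroup of $G_K$.

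For the inductive step at $m\geq 2$, let $D$ be a quasi-section of~\eqref{s5:ses:m-truncated} and $\delta\in\Delta[m]$ centralize it. The image $\bar D$ of $D$ in $\Pi(m-1)$ is a quasi-section centralized by the image $\bar\delta$ of $\delta$; the inductive hypothesis forces $\bar\delta=1$, so $\delta$ lies in the kernel $K_m:=\Delta^{[m-1]}/\Delta^{[m]}$. To show $\delta=1$ in $K_m$, I apply Lemma~\ref{s5:lem:commutator_lemma} with $H=\Delta^{[m-1]}$ inside the topologically finitely generated group $G=\Delta$: it yields $\bigcap_U U^{[1]}=\Delta^{[m]}$ as $U$ ranges over the open characteristic subgroups of $\Delta$ containing $\Delta^{[m-1]}$. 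Since $\Delta^{[m]}\subset U^{[1]}$, each such $U$ gives a natural homomorphism $K_m\to U/U^{[1]}=U^{ab}$, and it suffices to prove that the image $\delta_U$ vanishes for every such $U$.

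Fix $U$. Reducing $d\delta d^{-1}=\delta$ modulo $U^{[1]}$ in $\Pi$ and using that $U$ acts trivially on the abelian group $U^{ab}$ (so conjugation factors through $\Pi/U$), one sees that $\delta_U$ is fixed by the image $D_U$ of $D$ in $\Pi/U$; this image projects isomorphically onto $G_L$ since $D\cap(U/\Delta^{[m]})\subset D\cap\Delta[m]=\{1\}$, so $\delta_U\in H^0(G_L,U^{ab})$. The subgroup $U$ corresponds to a connected finite \'etale cover $Y\to X_{\overline K}$; after further enlarging $L$ so that $Y$ descends to a hyperbolic curve $Y_L/L$, the $G_L$-module $U^{ab}$ becomes $\pi_1^{et}((Y_L)_{\overline L})^{ab}$ with its canonical arithmetic action, which coincides with the one induced by $D_U$ because any two sections of a short exact sequence of profinite groups induce the same action on the abelianization of the kernel. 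Theorem~\ref{s2:thm:abelianization_covering_property} applied to $Y_L$ then gives $H^0(G_L,U^{ab})=0$, hence $\delta_U=0$. The main obstacle in the whole argument is precisely this last identification: checking that the $G_L$-module structure on $U^{ab}$ produced from the quasi-section $D_U\subset\Pi/U$ matches the standard arithmetic Galois action on the abelianized geometric fundamental group of $Y_L$, which requires a careful choice of descent for $Y$ over a sufficiently large finite extension of $L$ so that the admissibility machinery of Section~\ref{s:covering_property} can be invoked.
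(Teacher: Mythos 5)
Your proof is correct, and it takes a genuinely different route from the paper's. Both arguments induct on $m$, share the base case, and at the crucial stage both invoke Lemma~\ref{s5:lem:commutator_lemma}, but they peel the group in opposite orders. The paper first uses the case $m=1$ on $X$ itself to place $\delta$ in $\Delta[m]^{[1]}$, then runs over characteristic open $U\subset\Delta[m]$ containing $\Delta[m]^{[1]}$, applies the inductive hypothesis for $m-1$ to the cover determined by $U$ to place $\delta$ in $U^{[m-1]}$, and concludes via Lemma~\ref{s5:lem:commutator_lemma} with $H=\Delta[m]^{[1]}$, $i=m-1$. You instead apply the inductive hypothesis for $m-1$ on $X$ itself first, placing $\delta$ in $K_m=\Delta^{[m-1]}/\Delta^{[m]}$, then run over characteristic open $U\subset\Delta$ containing $\Delta^{[m-1]}$, apply the abelian ($m=1$) input $H^0(G_L,U^{ab})=0$ to the cover determined by $U$, and conclude via Lemma~\ref{s5:lem:commutator_lemma} with $H=\Delta^{[m-1]}$, $i=1$. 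Yours has the aesthetic advantage that the inductive hypothesis is only applied once, to $X$ itself, while the nontrivial arithmetic input is always the abelian vanishing from Section~\ref{s:covering_property}; the paper's version instead reuses the whole inductive hypothesis at each $U$.

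The step you flag as the "main obstacle" is in fact tautological once you set it up correctly, and no enlargement of $L$ is needed. Take $\Pi_Y$ to be the preimage of $D_U$ under $\Pi\twoheadrightarrow\Pi/U$. This is an open subgroup of $\Pi$ with $\Pi_Y\cap\Delta=U$ and $\mathrm{pr}(\Pi_Y)=G_L$, so $\Pi_Y$ is the \'etale fundamental group of a hyperbolic curve $X_Y$ over $L$ with $(X_Y)_{\overline K}=Y$, and $1\to U\to \Pi_Y\to G_L\to 1$ is its homotopy exact sequence. Since $\Pi_Y/U=D_U$, the conjugation action of $D_U\cong G_L$ on $U^{ab}$ coming from $\Pi/U^{[1]}$ is literally the arithmetic Galois action on $\pi_1^{et}((X_Y)_{\overline L})^{ab}$; the "two sections induce the same action" remark is not needed. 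With that identification, $H^0(G_L,U^{ab})=0$ follows exactly as in the proof of Theorem~\ref{s2:thm:abelianization_covering_property}, and the rest of your argument goes through.
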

\begin{proof}
By restricting to an open subgroup of $G_K$ it is enough to consider the case of a section.
We will prove the statement by induction on $m$, the case $m=1$ has been discussed in Section~\ref{s:global_conjugacy_special_case}.
Let $m\ge 2$ and suppose that the geometric centralizers of sections of $m'$-truncated \'etale homotopy sequence are trivial for $m' < m$ and for all hyperbolic curves over number fields.
Let $s$ be a section of sequence~\eqref{s5:ses:m-truncated} and denote by $\delta$ an element of $\Delta[m]$ which centralizes $s$. We want to prove that $\delta=1$.
From the case $m=1$ we easily see that the image of $\delta$ in the quotient 
\[
\Delta[m]\twoheadrightarrow \Delta[1]=\Delta^{ab}
\]
is trivial. 
Take $U$ be an open characteristic subgroup of $\Delta[m]$ which contains the commutator subgroup $\Delta[m]^{[1]}$, in particular $\delta$ belongs to $U$.
Denote by $\Delta'$ the preimage of $U$ along the quotient map $\Delta\twoheadrightarrow \Delta[m]$, thus $\Delta'$ is an open characteristic subgroup of $\Delta$.
From Lemma~\ref{s5:lem:m-1_quotient} we see that the quotient $\Delta'\twoheadrightarrow \Delta'[m-1]$ factors through $U$, i.e., we have a sequence of surjections
\[
\Delta'\twoheadrightarrow U\twoheadrightarrow \Delta'[m-1].
\]
Thus, by replacing $\Delta$ with $\Delta'$ and passing to the quotient $\Delta'[m-1]$ we conclude from the inductive assumption that $\delta$ maps to the identity in the quotient $U\twoheadrightarrow \Delta'[m-1]$.
As $U$ is a quotient of $\Delta'$, it follows that $\delta$ belongs to $U^{[m-1]}$.
Since $U$ was an arbitrary open characteristic subgroup of $\Delta[m]$ containing the commutator subgroup $\Delta[m]^{[1]}$, we conclude from Lemma~\ref{s5:lem:commutator_lemma} (applied to the closed characteristic subgroup $H=\Delta[m]^{[1]}$ and $i=m-1$) that $\delta$ belongs to $H^{[m-1]}=\Delta[m]^{[m]}$ which is a trivial group.
\end{proof}

The next corollary follows immediately from Proposition~\ref{s5:prop:triviality_of_centralizers}.

\begin{corollary}\label{s5:cor:solvable_centralizers}
Quasi-sections of short exact sequence~\eqref{s5:ses:solvable} have trivial geometric centralizers.
\end{corollary}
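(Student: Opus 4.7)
The plan is to reduce directly to the finite-level case already handled in Proposition~\ref{s5:prop:triviality_of_centralizers} by exploiting the inverse-limit presentation $\Delta[sol] = \varprojlim_m \Delta[m]$. As in the proof of that proposition, I first reduce the quasi-section case to the section case by restricting sequence~\eqref{s5:ses:solvable} to the open subgroup $pr(D) \subset G_K$, where $D$ is a quasi-section and $pr$ is the projection; this is legitimate because the formation of the $m$-step solvable quotient is compatible with restriction of the Galois part.

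So let $s$ be a section of \eqref{s5:ses:solvable} and let $\delta \in \Delta[sol]$ be an element centralizing $s(G_K)$; I want to conclude $\delta = 1$. For every $m \ge 1$ the quotient map $\Delta[sol] \twoheadrightarrow \Delta[m]$ extends to a surjection $\Pi(sol) \twoheadrightarrow \Pi(m)$, and composing $s$ with this surjection yields a section $s[m]$ of sequence~\eqref{s5:ses:m-truncated}. Writing $\delta[m]$ for the image of $\delta$ in $\Delta[m]$, the centralizing relation $\delta \cdot s(g) = s(g) \cdot \delta$ in $\Pi(sol)$ pushes forward to $\delta[m] \cdot s[m](g) = s[m](g) \cdot \delta[m]$ in $\Pi(m)$ for every $g \in G_K$. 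Hence $\delta[m]$ lies in the geometric centralizer of $s[m]$, which is trivial by Proposition~\ref{s5:prop:triviality_of_centralizers}; so $\delta[m] = 1$ for every $m \ge 1$.

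Finally, since the natural map $\Delta[sol] \to \varprojlim_m \Delta[m]$ is an isomorphism (by definition of $\Delta[sol]$ as the maximal pro-solvable quotient and the fact that $\Delta^{[sol]} = \bigcap_{m \ge 1} \Delta^{[m]}$), the vanishing of all $\delta[m]$ forces $\delta = 1$, which is what we wanted.

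There is essentially no obstacle here: all the work is in Proposition~\ref{s5:prop:triviality_of_centralizers} and in setting up the compatible system of truncations. The only small point to verify carefully is that the surjection $\Pi(sol) \twoheadrightarrow \Pi(m)$ restricts to the quotient $\Delta[sol] \twoheadrightarrow \Delta[m]$ on the geometric parts, which is immediate from the construction of these groups as quotients of $\Pi$ and $\Delta$ by characteristic subgroups.
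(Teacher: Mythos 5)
Your proposal is correct and follows essentially the same route as the paper: push the centralizing element $\delta$ down to each $m$-truncated quotient, invoke Proposition~\ref{s5:prop:triviality_of_centralizers} to kill the image in $\Delta[m]$, and conclude from $\Delta[sol]=\varprojlim_m\Delta[m]$. The only minor redundancy is your initial reduction from quasi-sections to sections: since Proposition~\ref{s5:prop:triviality_of_centralizers} is already stated for quasi-sections, you can apply it directly without restricting to $pr(D)$.
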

\begin{proof}
If $\delta$ is an element of $\Delta[sol]$ which centralizes a quasi-section of sequence~\eqref{s5:ses:solvable} then by Proposition~\ref{s5:prop:triviality_of_centralizers} we obtain that the image of $\delta$ is trivial in the quotient $\Delta[sol]\twoheadrightarrow \Delta[m]$ for every $m\ge 1$, thus $\delta=1$. 
\end{proof}

We are now able to finish the proof of Theorem~\ref{s5:thm:m-truncated_section_covering}.

\begin{proof}[Proof of Theorem \ref{s5:thm:m-truncated_section_covering}]
We prove the statement by induction on $m$, the case $m=1$ is equivalent to the finite covering property of $\Delta[1] = \Delta^{ab}$ and has been proved in Section~\ref{s:covering_property}.

Fix $m\ge 2$ and assume that the statement holds for all integers smaller than $m$ and all hyperbolic curves over number fields.
Let $s$ and $s_1,\ldots, s_n$ be sections of sequence~\eqref{s5:ses:m-truncated} such that sections $(s_i)_i$ cover section $s$ on a set of valuations $\Omega_K\subset \V(K)$ of density one. Write $\Omega$ for the preimage of $\Omega_K$ along the surjection $\V\twoheadrightarrow \V(K)$.
Let $U$ be an open characteristic subgroup of $\Delta[m]$. Define the group $\Pi(U)$ to be the neighbourhood of $s$ determined by $U$, hence we have a short exact sequence
\[
1\to U\to \Pi(U)\to G_K\to 1.
\]
Our goal is to find an integer $1\le i \le n$ such that some $\Delta[m]$-conjugate section $t_i$ of $s_i$ satisfies $t_i(G_K)\subset \Pi(U)$. Once we know this for all open characteristic subgroups $U$ of $\Delta[m]$, it will follow from Lemma~\ref{s3:lem:conjugacy_criterion} that for some $1\le i \le n$ sections $s_i$ and $s$ are $\Delta[m]$-conjugate.

In order to find such a section $s_i$ we will introduce an auxiliary subgroup of $\Delta[m]$ which will allow us to use the inductive assumption on $m$.
Define a subgroup $V$ of $\Delta[m]$ generated by $U$ and by the commutator subgroup $\Delta[m]^{[1]}$. Note that $V$ is an open characteristic subgroup of $\Delta[m]$ which contains $U$ and the quotient $\Delta[m]/V$ is abelian. Moreover, the subgroup $V^{[m-1]}$ is contained in $U$. Indeed, the image of $V$ in the quotient 
\[
\Delta[m]\twoheadrightarrow \Delta[m]/U
\]
is generated by $\Delta[m]^{[1]}$, thus the image of $V^{[m-1]}$ is trivial. 
Denote by $\Pi(V)$ the neighbourhood of section $s$ determined by $V$. Thus $\Pi(U)\subset \Pi(V)\subset \Pi$ and we have a short exact sequence
\begin{equation}\label{s5:ses:auxiliary_v_cover}
1\to  V\to \Pi(V)\to G_K\to 1.
\end{equation}
Let $\Delta'$ be the preimage of $V$ along the quotient $\Delta\twoheadrightarrow \Delta[m]$, thus $\Delta'$ is an open characteristic subgroup of $\Delta$.
By Lemma~\ref{s5:lem:m-1_quotient} we have a sequence of quotient maps 
\[
\Delta'\twoheadrightarrow V\twoheadrightarrow\Delta'[m-1].
\]
Applying the quotient $V\twoheadrightarrow \Delta'[m-1]$ to sequence~\ref{s5:ses:auxiliary_v_cover} we also have an exact sequence
\begin{equation}\label{s5:ses:m-1_cover}
1\to \Delta'[m-1]\to \Pi'(m-1)\to G_K\to 1.
\end{equation}
and we write $\varphi\colon\Pi(V)\twoheadrightarrow \Pi'(m-1)$ for the natural surjection.

Let $k$ be the index of $V$ in $\Delta[m]$. According to the discussion in Section~\ref{s:quasi-sections}, for every $1\le i\le n$ section $D_i = s_i(G_K)$ of sequence~\eqref{s5:ses:m-truncated} splits into $k$ quasi-sections $D'_{ij}$ of sequence~\eqref{s5:ses:auxiliary_v_cover}.
Here we recall that $D_{ij} = \delta_j D_i \delta_j^{-1}$, for $1\le j\le k$, are various $\Delta[m]$-conjugates of $D_i$ where $\delta_j$ are representatives of right cosets of $V$ in $\Delta[m]$ and $D'_{ij} = \Pi(V)\cap D_{ij}$ is an open subgroup of $D_{ij}$.
Moreover, by Lemma~\ref{s3:lem:stability_of_coverings} quasi-sections $(D'_{ij})_{ij}$ cover section $s$ on $\Omega$; here we consider $s$ as a section of sequence~\eqref{s5:ses:auxiliary_v_cover}.

Let $L$ be a finite field extension of $K$ such that all quasi-sections $D'_{ij}$ become sections over $G_L$, write $s_{ij}$ for a section of the short exact sequence
\begin{equation}\label{s5:ses:auxiliary_v_cover_restriction}
1\to V\to \Pi(V)_L\to G_L\to 1
\end{equation}
determined by $D'_{ij}$, here $\Pi(V)_L$ is the preimage of $G_L$ along the surjection $\Pi(V)\twoheadrightarrow G_K$. Similarly we have a short exact sequence
\begin{equation}\label{s5:ses:m-1_cover_restriction}
1\to \Delta'[m-1]\to \Pi'(m-1)_L\to G_L\to 1,
\end{equation}
obtained by restricting sequence~\eqref{s5:ses:m-1_cover} to $G_L$.

By construction sections $(s_{ij})_{ij}$ of sequence~\eqref{s5:ses:auxiliary_v_cover_restriction} cover the restricted section $s|_{G_L}$ on a set $\Omega_L\subset \V(L)$ of density one, where $\Omega_L$ is the image of $\Omega$ through the surjection $\V\twoheadrightarrow \V(K)$.
Composing sections $s|_{G_L}$ and $s_{ij}$ with $\varphi$ and using the inductive assumption we conclude that there exist integers $i,j$ such that sections $\varphi\circ s|_{G_L}$ and $\varphi\circ s_{ij}$ of sequence~\eqref{s5:ses:m-1_cover_restriction} are $\Delta'[m-1]$-conjugate.
Modifying the representative $\delta_j$ by an element of $V$, which changes $D_{ij}$ into some $V$-conjugate subgroup, we may assume that sections $\varphi\circ s|_{G_L}$ and $\varphi\circ s_{ij}$ of sequence~\eqref{s5:ses:m-1_cover_restriction} are in fact equal.
Write $t$ for a section of sequence~\eqref{s5:ses:m-truncated} determined by $D_{ij}$, we are going to prove that $t(G_K)$ is contained in $\Pi(U)$, which will finish the proof.

Let us first show that $t(G_K)$ is contained in $\Pi(V)$. As $m\ge 2$, we may compose the quotient $V\twoheadrightarrow \Delta'[m-1]$ with $\Delta'[m-1]\twoheadrightarrow \Delta'[1] = (\Delta')^{ab}$, write $\psi: \Pi(V)\twoheadrightarrow \Pi'(1)$ for the induced map. Since the map $\psi$ factors through $\varphi$ we see that compositions $\psi\circ s|_{G_L}$ and $\psi\circ s_{ij}$ are equal.
On the other hand, consider the abelianized short exact sequence
\begin{equation}\label{s5:ses_abelianized}
1\to \Delta[1]\to \Pi(1)\to G_K\to 1,
\end{equation}
and its restriction to $G_L$
\begin{equation}\label{s5:ses_abelianized_restricted}
1\to \Delta[1]\to \Pi(1)_L\to G_L\to 1.
\end{equation}
Note that the natural map $\Delta'[1]\to \Delta[1]$ makes the following diagram commutative
\[
\begin{tikzcd}
V \arrow[r, two heads]\arrow[d, hook] & \Delta'[1]\arrow[d] \\
\Delta[m]\arrow[r, two heads]   & \Delta[1].
\end{tikzcd}
\]   
Thus we may deduce that sections $s^{ab}|_{G_L}$ and $t^{ab}|_{G_L}$ of sequence \eqref{s5:ses_abelianized_restricted} are equal. By Proposition~\ref{s5:prop:triviality_of_centralizers} applied to $m=1$ and Lemma~\ref{s3:lem:equality_on_open_subgroup} this implies that in fact $s^{ab} = t^{ab}$, in other words sections $s$ and $t$ differ by a cocycle with values in $\Delta[m]^{[1]}$.
Since the quotient $\Delta[m]/V$ is abelian and $\Pi(V)$ is a neighbourhood of $s$ this implies that $t(G_K)$ is contained in $\Pi(V)$. 

Thus $t$ is in fact a section of sequence \eqref{s5:ses:auxiliary_v_cover}, hence $s_{ij} = t|_{G_L}$. Applying the map $\varphi$ we deduce
\[
 \varphi\circ s|_{G_L} = \varphi\circ s_{ij} = \varphi\circ t|_{G_L}.
\]
Similarly, by using Proposition~\ref{s5:prop:triviality_of_centralizers} for $m-1$ and Lemma~\ref{s3:lem:equality_on_open_subgroup} we infer that in fact $\varphi\circ s = \varphi \circ t$.
Moreover, from the inclusion $V^{[m-1]}\subset U$ we see that the quotient $V\twoheadrightarrow V/U$ factors through the quotient $V\twoheadrightarrow \Delta'[m-1]$.
Therefore sections $s$ and $t$ differ by a cocycle with values in $U$. As $\Pi(U)$ is a neighbourhood of section $s$ we conclude that $t(G_K)$ is contained in $\Pi(U)$. This concludes the proof.
\end{proof}

\section{Global conjugacy of sections}\label{s:global_conjugacy}
In this section we prove our main theorem concerning the finite covering property of the \'etale homotopy short exact sequence of a hyperbolic curve over a number field. To achieve this we will use the maximal pro-solvable case, proved in Section~\ref{s:pro-solvable}, for all open characteristic subgroups of the geometric fundamental group. As in the previous section, we consider the short exact sequence
\begin{equation}\label{s6:ses:basic}\tag{\'etale}
1\to \Delta\to \Pi\to G_K\to 1
\end{equation}
associated to a hyperbolic curve $X$ over a number field $K$.

First, recall the following well-known lemma.
\begin{lemma}\label{s6:lem:general_torsion-freeness}
Let $G$ be a profinite group. Suppose that for every open subgroup $U$ of $G$ the abelianization $U^{ab}$ is torsion-free. Then $G$ is torsion-free.
\end{lemma}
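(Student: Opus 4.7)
The plan is to prove the contrapositive: assuming that $G$ admits a nontrivial torsion element, I will exhibit an open subgroup $U$ of $G$ for which $U^{ab}$ contains nontrivial torsion. First I would reduce to torsion of prime order: if $g\in G$ has finite order $n>1$ and $p$ is any prime dividing $n$, then $g^{n/p}$ has order exactly $p$, so it is enough to treat the case where $g\in G$ itself has prime order $p$.

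Next I would use the standard separation property of profinite groups, namely that the intersection of all open normal subgroups of $G$ is trivial. Since $g\ne 1$, this yields an open normal subgroup $N\trianglelefteq G$ with $g\notin N$. I then set
\[
U = \langle g\rangle\cdot N.
\]
Because $N$ is normal in $G$, this is a subgroup; it is open as it contains the open subgroup $N$; and it visibly contains $g$. Thus $U$ is a candidate open subgroup to which the hypothesis applies.

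The key step is now to compute $U/N$. We have $U/N\cong \langle g\rangle/(\langle g\rangle\cap N)$; since $\langle g\rangle$ is cyclic of prime order $p$ and $g\notin N$, the intersection $\langle g\rangle\cap N$ is a proper subgroup of $\langle g\rangle$, hence trivial, so $U/N\cong \Z/p\Z$. In particular $U/N$ is abelian, so the surjection $U\twoheadrightarrow U/N$ factors through $U^{ab}$, giving a surjection $U^{ab}\twoheadrightarrow \Z/p\Z$ under which the image of $g$ is a generator. Consequently the image of $g$ in $U^{ab}$ is nontrivial, and since $g^p=1$ this image is a $p$-torsion element, contradicting the torsion-freeness of $U^{ab}$.

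I do not expect any substantive obstacle: the argument is a routine exercise in profinite-group manipulations. The only point requiring care is to arrange $U$ so that the image of $g$ survives in the abelianization, and this is precisely ensured by choosing $U=\langle g\rangle N$ with $g\notin N$, which forces $U/N$ to be a cyclic group of order $p$ in which $g$ is a generator.
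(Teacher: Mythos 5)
Your proof is correct, and it is the standard argument (reduce to prime order, separate from an open normal subgroup $N$, and use that $U=\langle g\rangle N$ has abelian quotient $U/N\cong\Z/p\Z$ through which $U^{ab}$ surjects); the paper does not reproduce a proof but simply cites this as Tamagawa's Lemma 1.5, and your reasoning recovers precisely that argument. The only point worth making explicit is that since $N$ is open (hence closed) and $U/N$ is abelian, the \emph{topological} closure of $[U,U]$ is still contained in $N$, so the surjection $U\twoheadrightarrow U/N$ indeed factors through the topological abelianization $U^{ab}$ used in the paper.
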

\begin{proof}
See~\cite[Lem.~1.5]{tamagawa1997grothendieck}.
\end{proof}

The above lemma is used to obtain the following fact.

\begin{lemma}\label{s6:lem:torsion_freeness}
Let $H$ be an open normal subgroup of $\Delta$. Then the group $\Delta/H^{[sol]}$ is torsion-free.
\end{lemma}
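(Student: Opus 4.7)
The plan is to apply Lemma~\ref{s6:lem:general_torsion-freeness} to $G = \Delta/H^{[sol]}$; this reduces the task to verifying that every open subgroup $\overline{U} = U/H^{[sol]}$ (with $U$ open in $\Delta$ containing $H^{[sol]}$) has torsion-free abelianization
\[
\overline{U}^{ab} = U/\bigl([U,U]\cdot H^{[sol]}\bigr).
\]

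The essential idea is to enlarge $U$ to the subgroup $W := U \cdot H$ of $\Delta$, which is open (and is indeed a subgroup, since $H$ is normal in $\Delta$) and contains $H$. Because $H \subseteq W$, we have $H^{[sol]} \subseteq [H,H] \subseteq [W,W]$, so
\[
\overline{W}^{ab} = W/\bigl([W,W]\cdot H^{[sol]}\bigr) = W^{ab}.
\]
Since $W$ is open in $\Delta$, it is the \'etale fundamental group of a finite \'etale cover of $X_{\overline{K}}$, and by the structural result underlying Theorem~\ref{s2:thm:abelianization_covering_property} (after a finite extension of scalars, $W^{ab}$ is either $\Zhat(1)^{\oplus r}$ or fits in an exact sequence $0 \to \Zhat(1)^{\oplus r} \to W^{ab} \to T(A) \to 0$ for some abelian variety $A$), the abelianization $W^{ab}$ is torsion-free.

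Given $u \in U$ with $u^n \in [U,U]\cdot H^{[sol]}$ for some $n \ge 1$, the image of $u^n$ in $W^{ab}$ is trivial; torsion-freeness of $W^{ab}$ then forces $u \in [W,W]$. Using the commutator identity $[W,W] = [U,U]\cdot [U,H]\cdot [H,H]$ (valid since $H$ is normal in $W$) together with the inclusion $[U,H]\cdot [H,H] \subseteq H$, we can write $u = c \cdot v$ with $c \in [U,U]$ and $v \in V := U \cap H$ lying in $V \cap [W,W]$. The problem is thereby reduced to showing that $V \cap [W,W] \subseteq [V,V]\cdot H^{[sol]}$.

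The main obstacle is this last step. I plan to handle it inductively along the derived series of $H$: the subgroup $V$ is itself the fundamental group of a hyperbolic curve, so $V^{ab}$ is torsion-free by Theorem~\ref{s2:thm:abelianization_covering_property}, and iterating the commutator decomposition on $V$ reduces the assertion to analogous statements for successive derived subgroups $H^{[m]}$ in place of $H$. Since $H^{[sol]} = \bigcap_{m \ge 1} H^{[m]}$, all obstructions are eventually absorbed into $H^{[sol]}$, and a compactness argument identifying $H^{[sol]} \cdot [U,U]$ with $\bigcap_m (H^{[m]} \cdot [U,U])$ yields the conclusion. The delicate point is to ensure that the commutator decompositions at each stage correctly track the image of $H^{[sol]}$ using the fact that $H^{[sol]}$ is characteristic in $H$ and hence normal in $\Delta$.
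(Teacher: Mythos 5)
Your proof has a genuine gap, and moreover misses the one-line observation that makes the lemma immediate. The claim you reduce to, namely $V\cap[W,W]\subseteq [V,V]\cdot H^{[sol]}$, is false in general, and the inductive argument you sketch is far too vague to rescue it. Already in the special case $U\subseteq H$ (so $W=H$, $V=U$) the claim reads $U\cap[H,H]\subseteq [U,U]\cdot H^{[sol]}$, which after passing to $H[sol]=H/H^{[sol]}$ becomes $\bar{U}\cap[H[sol],H[sol]]\subseteq[\bar{U},\bar{U}]$. This fails whenever $H[sol]$ has nonabelian open subgroups: shrinking $\bar{U}$ shrinks $[\bar{U},\bar{U}]$ much faster than it shrinks $\bar{U}\cap[H[sol],H[sol]]$ (for instance, taking $H=\Delta$ for an affine curve so that $\Delta[sol]$ is a free pro-solvable group of rank $2$, an index-$2$ open subgroup $\bar{U}$ contains $[\Delta[sol],\Delta[sol]]$ while $[\bar{U},\bar{U}]$ is a proper subgroup of it, by a Nielsen--Schreier rank count). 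There is also a logical slippage: you discard the hypothesis $u^n\in[U,U]\cdot H^{[sol]}$ after extracting $v$, so you end up trying to prove a statement about arbitrary $v\in V\cap[W,W]$, which is strictly more than needed and, as above, not true.

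The missing idea, which collapses the whole argument, is that $H^{[sol]}$ is perfect: $[H^{[sol]},H^{[sol]}]=H^{[sol]}$. Indeed, $[H^{[sol]},H^{[sol]}]$ is characteristic in $H$, and $H/[H^{[sol]},H^{[sol]}]$ is an extension of the pro-solvable group $H[sol]$ by the abelian group $H^{[sol]}/[H^{[sol]},H^{[sol]}]$, hence pro-solvable; since $H^{[sol]}$ is the kernel of the maximal pro-solvable quotient of $H$, this forces $H^{[sol]}\subseteq[H^{[sol]},H^{[sol]}]$, and equality follows. Granting this, the detour through $W=U\cdot H$ is unnecessary: from $H^{[sol]}\subseteq U$ and perfectness one gets $H^{[sol]}=[H^{[sol]},H^{[sol]}]\subseteq[U,U]$, so $[U,U]\cdot H^{[sol]}=[U,U]$ and $\overline{U}^{ab}=U^{ab}$, which is torsion-free because $U$, being open in $\Delta$, is itself the geometric fundamental group of a hyperbolic curve (you noticed this for $W$, but it holds just as well for $U$). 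This is exactly the paper's proof; your commutator decomposition and proposed induction along the derived series of $H$ are not needed, and without the perfectness observation your argument does not close.
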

\begin{proof}
Let $U$ be an open subgroup of $G=\Delta/H^{[sol]}$, by Lemma~\ref{s6:lem:general_torsion-freeness} it is enough to check that $U^{ab}$ is torsion-free. Write $V$ for an open subgroup of $\Delta$ obtained as the preimage of $U$ along the surjection $\Delta\twoheadrightarrow G$, thus we have $H^{[sol]}\subset V $ and $ U=V/H^{[sol]}$. Observe that we have an equality 
\[
[H^{[sol]},H^{[sol]}]=H^{[sol]}.
\]
This implies that $U^{ab}$ is isomorphic to $V^{ab}$ which is torsion-free.
\end{proof}

Let $H$ be an open characteristic subgroup of $\Delta$, after passing to the quotient $\Delta\twoheadrightarrow \Delta/H^{[sol]}$ we obtain a short exact sequence 
\begin{equation}\label{s6:ses:open_solvable}
1\to \Delta/H^{[sol]}\to \Pi/H^{[sol]}\to G_K\to 1.
\end{equation}

\begin{lemma}\label{s6:lem:triviality_of_centralizers}
Every quasi-section of short exact sequence~\eqref{s6:ses:open_solvable} has trivial geometric centralizer.
\end{lemma}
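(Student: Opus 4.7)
The plan is to reduce the lemma to Corollary~\ref{s5:cor:solvable_centralizers} applied to a suitable finite \'etale cover of $X$, and then exploit the torsion-freeness provided by Lemma~\ref{s6:lem:torsion_freeness}. Let $D$ be a quasi-section of sequence~\eqref{s6:ses:open_solvable} with $pr(D) = G_L$ for some finite extension $L/K$, and let $\delta$ be an element of $\Delta/H^{[sol]}$ centralizing $D$; the goal is to show $\delta = 1$. Write $\tilde H := H/H^{[sol]}$. Since $H$ is characteristic in $\Delta$, the subgroup $\tilde H$ is normal in $\Pi/H^{[sol]}$, which is what makes the construction below possible.

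First I would form the subgroup $\tilde V := D \cdot \tilde H$ of $\Pi/H^{[sol]}$ and check that it fits into a short exact sequence
\[
1 \to \tilde H \to \tilde V \to G_L \to 1,
\]
of which $D$ is a section. The preimage $V \subset \Pi$ of $\tilde V$ is an open subgroup satisfying $V \cap \Delta = H$ and surjecting onto $G_L$, so $V$ is the \'etale fundamental group of a geometrically connected finite \'etale cover $Y_0$ of $X_L$. In particular $Y_0$ is a hyperbolic curve over the number field $L$, and the displayed sequence is identified with the maximal pro-solvable \'etale homotopy sequence of $Y_0$.

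The remaining step is to push $\delta$ into $\tilde H$ and then apply the pro-solvable result. Setting $n := [\Delta:H]$, the image $\bar\delta$ of $\delta$ in $\Delta/H = (\Delta/H^{[sol]})/\tilde H$ satisfies $\bar\delta^n = 1$, so $\delta^n$ lies in $\tilde H$. Since $\delta^n$ still centralizes the quasi-section $D$ of the pro-solvable sequence of $Y_0$, Corollary~\ref{s5:cor:solvable_centralizers} applied to $Y_0$ forces $\delta^n = 1$. Finally Lemma~\ref{s6:lem:torsion_freeness} provides that $\Delta/H^{[sol]}$ is torsion-free, so $\delta = 1$. The main point to watch out for is the verification that $Y_0$ really is a geometrically connected hyperbolic curve over $L$, but this follows straightforwardly from the computation $V \cap \Delta = H$ together with the surjectivity of $V \twoheadrightarrow G_L$, both of which rely on the characteristic property of $H$.
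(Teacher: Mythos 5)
Your proposal is correct and follows essentially the same route as the paper: reduce $\delta$ to a torsion element $\delta^n\in H[sol]$, apply Corollary~\ref{s5:cor:solvable_centralizers} to conclude $\delta^n=1$, then invoke Lemma~\ref{s6:lem:torsion_freeness}. What you add is a welcome unpacking of the paper's terse phrase ``replacing $D$ by some open subgroup if necessary'': you construct $\tilde V = D\cdot\tilde H$ explicitly, check it is the maximal geometrically pro-solvable quotient of the fundamental group of the finite \'etale cover $Y_0$ of $X_L$ determined by the preimage $V$ (using $V\cap\Delta=H$ and surjectivity onto $G_L$), and verify that $D$ is a genuine section of the resulting sequence to which the corollary applies. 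This is the intended content, just made explicit.
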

\begin{proof}
Let $D$ be a quasi-section of sequence~\eqref{s6:ses:open_solvable} and let $\delta$ be an element of $\Delta/H^{[sol]}$ centralizing $D$. Note that we have an exact sequence
\[
1\to H[sol]\to \Delta/H^{[sol]}\to \Delta/H\to 1,
\] 
and since the group $\Delta/H$ is finite there exists a natural number $n\ge 1$ such that $\delta^n$ belongs to $H[sol]$.
Clearly $\delta^n$ also centralizes $D$ thus replacing $D$ by some open subgroup if necessary we may apply Corollary~\ref{s5:cor:solvable_centralizers} and conclude that $\delta^{n}=1$.
Finally, Lemma~\ref{s6:lem:torsion_freeness} implies that $\delta=1$.
\end{proof}

From our results in Section~\ref{s:quasi-sections} we obtain the following corollary.

\begin{corollary}
Let $\bar{s}$ and $\bar{t}$ be two sections of sequence~\eqref{s6:ses:open_solvable}. Suppose that $\bar{s}$ and $\bar{t}$ coincide on an open subgroup of $G_K$. Then, we have $\bar{s}=\bar{t}$.
\end{corollary}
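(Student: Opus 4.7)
The plan is to reduce immediately to Lemma~\ref{s3:lem:equality_on_open_subgroup} from Section~\ref{s:quasi-sections}. That lemma was proved under the standing hypothesis (introduced just before its statement) that every quasi-section of the short exact sequence under consideration has trivial geometric centralizer; granted this hypothesis, two sections of the sequence that agree on any open subgroup of the base are necessarily equal. So the only thing that needs to be done to obtain the corollary is to verify that sequence~\eqref{s6:ses:open_solvable} satisfies this hypothesis.

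That verification is exactly the content of the immediately preceding Lemma~\ref{s6:lem:triviality_of_centralizers}. Thus the proof I would write is: invoke Lemma~\ref{s6:lem:triviality_of_centralizers} to put ourselves in the setting of Lemma~\ref{s3:lem:equality_on_open_subgroup}, and then feed the two sections $\bar{s}$ and $\bar{t}$ (which agree on an open subgroup of $G_K$ by assumption) into the latter to conclude $\bar{s}=\bar{t}$. If one unpacks what the lemma actually does, it amounts to viewing $\Delta/H^{[sol]}$ as a $G_K$-module via $\bar{s}$, writing $\bar{t}=a_g\bar{s}$ for a cocycle, using the hypothesis to see that $a_g$ lies in the geometric centralizer of $\bar{s}$ restricted to a normal open subgroup, and then using triviality of that centralizer to conclude $a_g=1$.

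There is no genuine obstacle here; the lemma is a bookkeeping corollary whose sole purpose is to package Lemma~\ref{s6:lem:triviality_of_centralizers} together with the abstract equality criterion of Section~\ref{s:quasi-sections} into the convenient statement that sections of sequence~\eqref{s6:ses:open_solvable} are determined by their restriction to any open subgroup of $G_K$. This formulation will be what is needed in the subsequent arguments of Section~\ref{s:global_conjugacy}, when one pushes the finite covering property from the maximal pro-solvable quotient back up to the full \'etale fundamental group via a compatible family of open characteristic subgroups $H\subset\Delta$.
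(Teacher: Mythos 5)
Your proof is correct and is essentially identical to the paper's one-line proof, which simply cites Lemma~\ref{s3:lem:equality_on_open_subgroup} and Lemma~\ref{s6:lem:triviality_of_centralizers}. Nothing to add.
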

\begin{proof}
This follows immediately from Lemma~\ref{s3:lem:equality_on_open_subgroup} and Lemma~\ref{s6:lem:triviality_of_centralizers}.
\end{proof}

We are now ready to prove the main theorem of this article.

\begin{theorem}
Short exact sequence~\eqref{s6:ses:basic} has finite covering property.
\end{theorem}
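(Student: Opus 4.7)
The plan is to reduce the finite covering property of~\eqref{s6:ses:basic} to the pro-solvable case (Corollary~\ref{s5:cor:solvable_covering_property}) applied to suitable finite \'etale covers of $X$, using the splitting of conjugacy classes from Section~\ref{s:quasi-sections} and the centralizer-triviality machinery of Section~\ref{s:global_conjugacy} to lift a local conjugacy back to a global containment. Let $s,s_1,\dots,s_n$ be sections of~\eqref{s6:ses:basic} with $(s_i)_i$ covering $s$ on some density-one $\Omega_K\subset\V(K)$. By Lemma~\ref{s3:lem:conjugacy_criterion}, combined with a pigeonhole argument on the finite set of indices $i$ applied to a cofinal descending chain of characteristic open subgroups of $\Delta$ (such a chain exists since $\Delta$ is topologically finitely generated), it suffices to prove: \emph{for every open characteristic subgroup $H\subset\Delta$ there exist $1\le i\le n$ and $\delta\in\Delta$ with $\delta s_i(G_K)\delta^{-1}\subset H\cdot s(G_K)$.}

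So fix such $H$ and set $U=H\cdot s(G_K)$, an open subgroup of $\Pi$ with $U\cap\Delta=H$; it fits into the \'etale homotopy sequence $1\to H\to U\to G_K\to 1$ of a connected hyperbolic $K$-curve $Z$ (the finite \'etale cover of $X$ classified by $U$). Choose coset representatives $\delta_1,\dots,\delta_k$ of $H$ in $\Delta$ and set $D_{ij}=\delta_j s_i(G_K)\delta_j^{-1}$, $D'_{ij}=U\cap D_{ij}$. By Lemma~\ref{s3:lem:stability_of_coverings}, the quasi-sections $(D'_{ij})_{i,j}$ cover $s(G_K)$ on the preimage $\tilde\Omega\subset\V$ of $\Omega_K$. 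Passing to a finite extension $L/K$ with $G_L\subset\bigcap_{i,j}pr(D'_{ij})$, each $D'_{ij}|_{G_L}$ becomes a genuine section $t_{ij}$ of $1\to H\to U_L\to G_L\to 1$, which is the \'etale homotopy sequence of the hyperbolic $L$-curve $Z_L$, and $(t_{ij})_{i,j}$ cover $s|_{G_L}$ on the density-one set $\Omega_L\subset\V(L)$.

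Next I would compose with the surjection $U_L\twoheadrightarrow U_L/H^{[sol]}$ and apply Corollary~\ref{s5:cor:solvable_covering_property} to $Z_L$: in the pro-solvable quotient sequence $1\to H/H^{[sol]}\to U_L/H^{[sol]}\to G_L\to 1$ the sections $(\bar t_{ij})_{i,j}$ cover $\bar s|_{G_L}$, so there exist indices $i,j$ for which $\bar t_{ij}$ and $\bar s|_{G_L}$ are $H/H^{[sol]}$-conjugate. Modifying $\delta_j$ by a suitable element of $H$ lifting the conjugator, one may arrange $\bar t_{ij}=\bar s|_{G_L}$; let $\delta$ denote the resulting modified representative. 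The $\Delta$-conjugate section $\delta s_i\delta^{-1}$ of $\Pi\to G_K$ then descends to a section of~\eqref{s6:ses:open_solvable} that coincides with $\bar s$ on the open subgroup $G_L\subset G_K$, so by the corollary stating that two sections of~\eqref{s6:ses:open_solvable} agreeing on an open subgroup of $G_K$ are equal (a direct consequence of Lemma~\ref{s3:lem:equality_on_open_subgroup} and Lemma~\ref{s6:lem:triviality_of_centralizers}) these two sections must coincide on all of $G_K$. Hence $\delta s_i(g)\delta^{-1}\equiv s(g)\pmod{H^{[sol]}}$ for every $g\in G_K$, and since $H^{[sol]}\subset H$ this gives $\delta s_i(G_K)\delta^{-1}\subset H\cdot s(G_K)=U$, as required.

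The main obstacle is this final lifting step: one must convert the mere $H/H^{[sol]}$-conjugacy of two sections over the smaller field $L$ (supplied by Corollary~\ref{s5:cor:solvable_covering_property}) into a global \emph{equality} of the corresponding sections of the open-solvable sequence~\eqref{s6:ses:open_solvable} over $K$. The reason for working with the kernel $H^{[sol]}$ rather than $H$ itself is that $\Delta/H^{[sol]}$ is the quotient for which both the pro-solvable finite covering property and the centralizer-triviality of Lemma~\ref{s6:lem:triviality_of_centralizers} are simultaneously available---centralizer-triviality is what forces equality on the open subgroup $G_L$ to propagate to all of $G_K$, while the inclusion $H^{[sol]}\subset H$ guarantees that the information modulo $H$ required by the claim survives the passage to the pro-solvable quotient.
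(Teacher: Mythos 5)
Your argument matches the paper's proof step for step: you pass to the neighbourhood $U=H\cdot s(G_K)$ (the paper's $\Pi'$ with $\Delta'=H$), split the conjugacy classes into quasi-sections $D'_{ij}$ and descend to $G_L$, invoke Corollary~\ref{s5:cor:solvable_covering_property} for the pro-solvable quotient of the cover $Z_L$, normalize the conjugator into $H$, and then use the centralizer-triviality of sequence~\eqref{s6:ses:open_solvable} to upgrade equality over $G_L$ to equality over $G_K$, concluding via $H^{[sol]}\subset H$. This is essentially the same route as the paper, including the explicit use of the open-solvable sequence $\Pi/H^{[sol]}\to G_K$ for the final lifting step.
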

\begin{proof}
Let $s$ and $s_1,\ldots, s_n$ be sections of sequence~\eqref{s6:ses:basic} for some $n\ge 1$ such that sections $(s_i)_i$ cover section $s$ on a set of valuations $\Omega_K\subset \V(K)$ of density one.
Let $\Delta'$ be an open characteristic subgroup of $\Delta$ and define $\Pi'$ to be the neighbourhood of section $s$ determined by $\Delta'$. Thus $\Pi'$ is an open subgroup of $\Pi$ and we have a short exact sequence
\[
1\to \Delta'\to \Pi'\to G_K\to 1.
\]
By Lemma~\ref{s3:lem:conjugacy_criterion} it is enough to show that for some $1\le i \le n$ there exists a $\Delta$-conjugate section $t_i$ of section $s_i$ such that $t_i(G_K)\subset \Pi'$.

We proceed similarly as in the proofs of Theorems~\ref{s4:thm:global_conjugacy} and ~\ref{s5:thm:m-truncated_section_covering}.
Write $D=s(G_K)$ and $D_i=s_i(G_K)$ for $1\le i\le n$, from the discussion in Section~\ref{s:quasi-sections} we have a splitting of the $\Delta$-torsor $(\Delta, D_i)$ as follows
\[
\Pi' \cap (\Delta, D_i) = \bigsqcup_{j=1}^{k}(\Delta', D'_{ij}).
\]
Here $D_{ij}=\delta_j D_i \delta_j^{-1}$ for a system of representatives $\delta_j$ of right cosets of $\Delta'$ in $\Delta$ and $D'_{ij}= \Pi' \cap D_{ij}$.

Write $\Omega$ for the preimage of $\Omega_K$ along the surjection $\V\twoheadrightarrow \V(K)$.
Take a finite field extension $L/K$ such that after restricting to $G_L$ all quasi-sections $D'_{ij}$ become sections and write $s_{ij}$ for the section determined by $D_{ij}$. Denote by $\Omega_L$ the image of $\Omega$ through the surjection $\V\twoheadrightarrow \V(L)$, it is a set of density one.
Then $s|_{G_L}$ and $s_{ij}$ are sections of the short exact sequence 
\[
1 \to \Delta' \to \Pi'_L \to G_L\to 1,
\]
where $\Pi'_L$ is the preimage of $G_L$ along the surjection $\Pi'\twoheadrightarrow G_K$. By Lemma~\ref{s3:lem:stability_of_coverings} we see that sections $(s_{ij})_{ij}$ cover section $s|_{G_L}$ on $\Omega_L$. Applying the quotient $\Delta'\twoheadrightarrow \Delta'[sol]$ we have a short exact sequence
\begin{equation}\label{s6:eq:ses_solvable_rest}
1\to \Delta'[sol]\to \Pi'_L(sol)\to G_L\to 1,
\end{equation}
together with sections $s|_{G_L}^{sol}$ and $s^{sol}_{ij}$ obtained by composing sections $s|_{G_L}$ and $s_{ij}$ with the quotient map $\Pi'_L\twoheadrightarrow \Pi'_L(sol)$.
Thus, by applying Corollary~\ref{s5:cor:solvable_covering_property} to sequence~\eqref{s6:eq:ses_solvable_rest} we conclude that there exist integers $i,j$ such that sections $s|_{G_L}^{sol}$ and $s^{sol}_{ij}$ are $\Delta'[sol]$-conjugate.
Modifying $\delta_j$ by an element of $\Delta'$, which changes $D_{ij}$ to its $\Delta'$-conjugate, we may assume that we have in fact the equality $s|_{G_L}^{sol} = s^{sol}_{ij}$.
Write $t$ for a section of sequence~\eqref{s6:ses:basic} determined by $D_{ij}$, thus $s_{ij}=t|_{G_L}$. We are going to show that $t(G_K)$ is contained in $\Pi'$, which will finish the proof.

Consider the following short exact sequence
\begin{equation}\label{s6:eq:last_sequence}
1\to \Delta/\Delta'^{[sol]}\to \Pi/\Delta'^{[sol]}\to G_K\to 1.
\end{equation}
Write $\bar{s}$ and $\bar{t}$ for the compositions of sections $s$ and $t$ with the quotient map $\Pi\twoheadrightarrow \Pi/\Delta'^{[sol]}$, hence $\bar{s}$ and $\bar{t}$ are both sections of sequence~\eqref{s6:eq:last_sequence}.
It is enough to show that $\bar{s}=\bar{t}$ since this implies that $s$ and $t$ differ by a cocycle with values in $\Delta'^{[sol]}\subset \Delta'$ from which it follows that $t(G_K)$ is contained in $\Pi'$.
Moreover, by applying Lemma~\ref{s6:lem:triviality_of_centralizers} to sequence~\eqref{s6:eq:last_sequence} we see that to prove the equality $\bar{s}=\bar{t}$ we may restrict both sections to an open subgroup $G_L$ of $G_K$.
By definition of $G_L$, the images $s(G_L)$ and $t(G_L)$ are contained in $\Pi'$, hence $\bar{s}|_{G_L}$ and $\bar{t}|_{G_L}$ are in fact sections of sequence~\eqref{s6:eq:ses_solvable_rest}. Therefore we have $\bar{s}|_{G_L}=s^{sol}|_{G_L}$ and $\bar{t}|_{G_L}=s^{sol}_{ij}$, from which we conclude $\bar{s}|_{G_L}=\bar{t}|_{G_L}$.
\end{proof}

\section*{Acknowledgements}
\indent The author would like to express his gratitude to Yuichiro Hoshi for listening to the first version of the argument and to Shinichi Mochizuki for his comment which led to the statement of Lemma~\ref{s6:lem:torsion_freeness}.

\bibliographystyle{abbrv}
\bibliography{bibliography}

\end{document}